\documentclass[12pt]{amsart}

\usepackage[dvipdfmx]{graphicx}
\usepackage{cite}
\usepackage{amsthm, amsmath, amssymb, mathtools}
\usepackage[top=45truemm, bottom=45truemm, left=30truemm, right=30truemm]{geometry}

\renewcommand{\limsup}{\varlimsup}

\newtheorem{theorem}{Theorem}[section]
\newtheorem{lemma}[theorem]{Lemma}
\newtheorem{corollary}[theorem]{Corollary}

\newtheorem{proposition}[theorem]{Proposition}

\newtheorem{question}[theorem]{Question}

\theoremstyle{definition}
\newtheorem{remark}[theorem]{Remark}
\newtheorem{notation}[theorem]{Notation}

\newcommand{\subpartial}{\tilde{\partial}}
\newcommand{\Conn}{\mathrm{Conn}}
\newcommand{\ord}{\mathrm{ord}}
\numberwithin{equation}{section}

\title[Topology and algebraic independence of PRCs]{Topological  properties and algebraic independence of 
sets of prime-representing constants}
\author[K. Saito]{Kota Saito}
\date{}
\address{Kota Saito\\
Faculty of Pure and Applied Sciences, University of Tsukuba, Tsukuba, Japan}
\curraddr{}
\email{saito.kota.gn@u.tsukuba.ac.jp}

\author[W. Takeda]{Wataru Takeda}
\address{Wataru Takeda\\ Department of Applied Mathematics, Tokyo University of Science,
1-3 Kagurazaka, Shinjuku-ku, Tokyo 162-8601, Japan.}
\email{w.takeda@rs.tus.ac.jp}
\date{}
\curraddr{}

\subjclass[2020]{11B05, 11J68, 11J81}
\keywords{prime-representing functions, prime-representing constants, Mills' constant, Cantor set, transcendental numbers, algebraic independence, topology}

\begin{document}
\maketitle
\begin{abstract}
Let $(c_k)_{k=1}^\infty$ be a sequence of positive integers. We investigate the set of $A>1$ such that the integer part of $A^{c_1\cdots c_k}$ is always a prime number for every positive integer $k$. Let $\mathcal{W}(c_k)$ be this set. The first goal of this article is to determine the topological structure of $\mathcal{W}(c_k)$. Under some conditions on $(c_k)_{k=1}^\infty$, we reveal that $\mathcal{W}(c_k)\cap [0,a]$ is homeomorphic to the Cantor middle third set for some $a$. The second goal is to propose an algebraically independent subset of $\mathcal{W}(c_k)$ if $c_k$ is rapidly increasing. As a corollary, we disclose that the minimum of $\mathcal{W}(k)$ is transcendental. In addition, we apply the main result to $\mathcal{W}(c_k)$ in the case when $c_1\cdots c_k=3^{k!}$. 
As a consequence, we give an algebraically independent and countably infinite subset of this set. Furthermore, we also get results on the rational approximation, $\mathbb{Q}$-linear independence, and numerical calculations of elements in $\mathcal{W}(c_k)$.  
\end{abstract}
\section{Introduction}\label{Section-introduction}
Let $\mathbb{N}$ denote the set of all positive integers, and let $\lfloor x\rfloor$ denote the integer part of $x\in \mathbb{R}$. We say that $f: \mathbb{N} \rightarrow \mathbb{N}$ is a \textit{prime-representing function} (\textit{PRF}) if $f(k)$ is a prime number for every $k\in \mathbb{N}$. Let $(c_k)_{k\in \mathbb{N}}$ be any real sequence satisfying $c_1>0 $ and $c_k>1$ for all $k\geq 2$. We define 
\[
\mathcal{W}(c_k) =\{A>1\colon \lfloor A^{C_k} \rfloor \text{ is a PRF}  \},
\]  
where we define $C_k=c_1\cdots c_k$ for every $k\in \mathbb{N}$. Further, an element in $\mathcal{W}(c_k)$ is called a \textit{prime-representing constant} (\textit{PRC}) with respect to $(c_k)_{k\in \mathbb{N}}$. Mills was the first to propose a PRF of this type. He showed that there exists a constant $A>1$ such that $\lfloor  A^{3^k}\rfloor$ is a PRF \cite{Mills}. Therefore, $\mathcal{W}(3)$ is non-empty from his result. The minimum of $\mathcal{W}(3)$ is called Mills' constant. We will verify the existence of the minimum of $\mathcal{W}(3)$ in Remark~\ref{Remark-min}. Assuming the Riemann hypothesis, Caldwell and Cheng computed $600$ digits of the decimal expansion of Mills' constant \cite{CaldwellCheng}.  

Recall that a complex number $\alpha$ is called transcendental if $P(\alpha)\neq 0$ for all non-zero polynomials $P\in \mathbb{Q}[x]$. A finite set $\{\alpha_1,\ldots,\alpha_r\}\subseteq \mathbb{C}$ is called algebraically independent if $P(\alpha_1,\ldots , \alpha_r) \neq 0$ 
for all non-zero polynomials $P\in \mathbb{Q}[x_1,\ldots, x_r]$. Furthermore, an infinite set $Z\subseteq \mathbb{C}$ is called algebraically independent if all non-empty finite subsets of $Z$ are algebraically independent.

Since $\mathcal{W}(3)$ is uncountable by a result of Wight\cite[Section~6]{Wright54}, we can find uncountably many transcendental PRCs in $\mathcal{W}(3)$. However, we do not know which PRCs are transcendental. It is still open to determine whether Mills' constant is rational or irrational. Interestingly, Alkauskas and Dubickas gave results on the transcendence of PRCs in a different way \cite[Theorem~1]{AlkauskasDubickas}. 
Specifically, they constructed a transcendental number in $\mathcal{W}(c_k)$ if a real sequence $(c_k)_{k\in\mathbb{N}}$ satisfies that 
\begin{enumerate}
\item $c_1=1$;
\item $c_{k+1}>2.1053$ and $C_k\in \mathbb{N}$ for all $k\in \mathbb{N}$;
\item \label{Condition-Large-ck} $\limsup_{k\rightarrow \infty} c_{k+1}=\infty$. 
\end{enumerate}

As a corollary, they showed that we can choose a transcendental number $\xi>1$ such that $\lfloor \xi^{(k+1)!} \rfloor$ is a PRF \cite[Corollary]{AlkauskasDubickas}. However, they did not discuss whether the minimum of $\mathcal{W}(c_k)$ is transcendental. 

In this article, we present an improvement on their results. We discuss the algebraic independence of certain countably infinite subsets of $\mathcal{W}(c_k)$. Especially, we reveal that the minimum of $\mathcal{W}(c_k)$ is transcendental under some conditions $(c_k)_{k\in \mathbb{N}}$. Before stating the main theorem, we introduce the  sub-boundary. Let $\partial X$ denote the boundary of $X\subseteq \mathbb{R}$. Let $\mathrm{Conn}(Y)$ denote the family of connected components of $Y\subseteq \mathbb{R}$. We define
\begin{gather*}
\subpartial X=\bigcup_{U\in \mathrm{Conn}(\mathbb{R}\setminus X) }  \partial U,\\
\subpartial_L X=\{\sup U\in \mathbb{R}\colon U\in \mathrm{Conn}(\mathbb{R}\setminus X) \},\quad \subpartial_R X=\{\inf U\in \mathbb{R}\colon U\in \mathrm{Conn}(\mathbb{R}\setminus X) \},
\end{gather*}
for all $X\subseteq \mathbb{R}$. We say that $\subpartial X$ is the \textit{sub-boundary} of $X$, and we say that $\subpartial_L X$ and $\subpartial_R  X$ are the \textit{left sub-boundary} and the \textit{right sub-boundary} of $X$, respectively. Note that for all $U \in \Conn (\mathbb{R}\setminus X )$ we obtain $\partial U\subseteq \partial X$. Thus, $\subpartial X \subseteq \partial X$. Further, $\subpartial X= \subpartial_L X \cup \subpartial_R X$ since all connected subsets of $\mathbb{R}$ are intervals or singletons.

We now give an important example. Let  
\[
\mathcal{C} =\left\{\sum_{i=1}^\infty \frac{x_i}{3^{i}} \colon x_i\in \{0,2\} \text{ for all }i\in \mathbb{N}  \right\},
\] 
which is called the Cantor middle third set. Then we obtain
\begin{align*}
\subpartial \mathcal{C}&=\left\{ \sum_{i=1}^\infty \frac{x_i}{3^{i}} \colon x_i\in\{0,2\} \text{ for all $i\in \mathbb{N}$, and } x_N=x_{N+1}=\cdots \text{ for some $N\in\mathbb{N}$} \right\}.
\end{align*}
From the identity, we see that $\subpartial\mathcal{C}\subseteq \mathbb{Q}$. Therefore, we can find many algebraic relations among elements in $\subpartial \mathcal{C}$. Inspired by this fact, let us discuss the following question.

\begin{question}
Given a Cantor-like set $X$ (for example, a totally disconnected and closed set), can we find any algebraic relations among elements in $\subpartial X$? 
\end{question}

 The first goal of this article is to determine the topological structure of $\mathcal{W}(c_k)$. A topological space $X$ is called \textit{perfect} if $X$ is closed, and has no isolated points. Also, $X$ is called \textit{totally disconnected} if all non-empty connected subsets of $X$ are singletons.  
\begin{theorem}\label{Theorem-topology}
Let $(c_k)_{k\in \mathbb{N}}$ be a sequence of real numbers. Suppose that 
\begin{enumerate}
\item \label{Theorem-topology-1} $c_1> 0$;  
\item \label{Theorem-topology-2} $c_{k+1}\in \mathbb{Z}$ and $c_{k+1}\geq 2$ for all $k\in \mathbb{N}$;
\item \label{Theorem-topology-3} $c_{k+1}\geq 3$ for infinitely many $k\in \mathbb{N}$.
\end{enumerate}
Then $\mathcal{W}(c_k)$ is non-empty, totally disconnected, and perfect. 
\end{theorem}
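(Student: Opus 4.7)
I encode $\mathcal{W}(c_k)$ through nested intervals. For each $k\ge 1$ and each prime $p$, set $I(k,p):=[p^{1/C_k},(p+1)^{1/C_k})$; a direct computation gives $I(k+1,q)\subseteq I(k,p)$ if and only if $q$ is a prime in $[p^{c_{k+1}},(p+1)^{c_{k+1}}-1]$, so $\mathcal{W}(c_k)$ consists exactly of those $A>1$ that lie in some infinite descending chain $I(1,p_1)\supseteq I(2,p_2)\supseteq\cdots$. A useful companion fact is that every $A\in\mathcal{W}(c_k)$ satisfies $A^{C_k}\notin\mathbb{Z}$, since otherwise $A^{C_{k+1}}=(A^{C_k})^{c_{k+1}}$ would be a prime power with exponent $\ge 2$, hence composite.

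\emph{Non-emptiness.} I build the chain $(p_k)_{k\ge 1}$ inductively. Whenever $c_{k+1}\ge 3$, the Baker--Harman--Pintz bound on primes in short intervals provides a prime in $[p_k^{c_{k+1}},(p_k+1)^{c_{k+1}}-1]$ for $p_k$ sufficiently large. The serious difficulty is the case $c_{k+1}=2$, where the interval has length only $\approx 2p_k$ and standard gap results fall short. I plan to handle this by a block look-ahead: group each maximal consecutive run of indices with $c_{j+1}=2$ with the next index (guaranteed by hypothesis~(3)) where $c_{j+1}\ge 3$, apply the short-interval prime bound on the combined exponent, and use density statements on primes in intervals of the form $[n^2,(n+1)^2]$ to arrange that all intermediate floors land on primes. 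This block construction is the principal obstacle of the entire proof.

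\emph{Closedness.} For $A\in(1,\infty)\setminus\mathcal{W}(c_k)$, pick $k$ with $n:=\lfloor A^{C_k}\rfloor$ composite (possibly replacing $k$ by a later index when the immediate floor is prime but a subsequent one fails). If $A^{C_k}\in(n,n+1)$, continuity of $B\mapsto B^{C_k}$ gives a neighborhood of $A$ on which $\lfloor B^{C_k}\rfloor=n$. If $A^{C_k}=n$ exactly, then for $B>A$ close the floor is still $n$, while for $B<A$ close one has $\lfloor B^{C_{k+1}}\rfloor=n^{c_{k+1}}-1$; this number is divisible by $n-1$, so when $n\ge 3$ it factors non-trivially, and when $n=2$ one already has $\lfloor B^{C_k}\rfloor=1$, killing primality. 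In every case a whole neighborhood of $A$ avoids $\mathcal{W}(c_k)$.

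\emph{Totally disconnected and no isolated points.} Given $A_1<A_2$ in $\mathcal{W}(c_k)$, choose $k$ with $\lfloor A_1^{C_k}\rfloor=p_1\ge 5$ and $\lfloor A_2^{C_k}\rfloor\ge p_1+2$; this is possible because $A_2^{C_k}-A_1^{C_k}\to\infty$. Then $B:=(p_1+1)^{1/C_k}$ lies strictly between $A_1$ and $A_2$ with $\lfloor B^{C_k}\rfloor=p_1+1$ even and $\ge 6$, hence composite, so $B\notin\mathcal{W}(c_k)$. For the absence of isolated points, fix $A\in\mathcal{W}(c_k)$ and $\varepsilon>0$, and pick (using hypothesis~(3)) a large $k$ with $c_{k+1}\ge 3$ for which the width of $I(k,p_k)$ is less than $\varepsilon$. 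The prime number theorem for short intervals yields at least two primes in $[p_k^{c_{k+1}},(p_k+1)^{c_{k+1}}-1]$, so I pick $q\ne p_{k+1}$ there and re-run the non-emptiness construction from $q$ at level $k+1$ to produce $A'\in\mathcal{W}(c_k)\cap I(k+1,q)$. Since $I(k+1,q)$ and $I(k+1,p_{k+1})$ are disjoint yet both contained in $I(k,p_k)$, we get $A'\ne A$ and $|A-A'|<\varepsilon$. The whole proof thus reduces to the block look-ahead at $c_{k+1}=2$ in the non-emptiness step.
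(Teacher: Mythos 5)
Your reductions of closedness and total disconnectedness are fine (and in fact a bit more direct than the paper's, which proves closedness via monotone limits and an integer-power case analysis), and your no-isolated-points scheme is structurally the same as the paper's. But the proposal has a genuine gap exactly where you flag it: the passage through indices with $c_{k+1}=2$. Your ``block look-ahead'' — merging a run of exponents $2$ with the next exponent $\geq 3$, applying a short-interval prime bound to the combined exponent, and then invoking ``density statements on primes in intervals of the form $[n^2,(n+1)^2]$ to arrange that all intermediate floors land on primes'' — does not work as described. Controlling $\lfloor A^{C_{k+m}}\rfloor$ for the merged exponent says nothing about the intermediate floors $\lfloor A^{C_{k+1}}\rfloor,\ldots,\lfloor A^{C_{k+m-1}}\rfloor$, and forcing each intermediate floor to be prime would require a prime in \emph{every} interval $[p^2,(p+1)^2]$ along the chain, i.e.\ a Legendre-type statement that is open unconditionally; only almost-all versions are known. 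So the ``principal obstacle'' you identify is left unresolved, and with it both non-emptiness and (since your perfectness argument re-runs the same construction from a second prime $q$) the no-isolated-points claim.

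The paper's way around this is quantitative rather than combinatorial: at every stage the construction does not track a single prime but a set of $\gg d\, g(X^{\theta})$ primes in a short interval, guaranteed by Baker--Harman--Pintz (Theorem~\ref{Lemma-BakerHarmanPintz}) when $c_{k+1}\geq 1/(1-\theta)$. When $2\leq c_{k+1}<1/(1-\theta)$ it invokes Matom\"aki's bound (Theorem~\ref{Lemma-Matomaki}): among the available primes $q$, at most $O(X^{1-c/3})$ are ``bad'' in the sense that $[q^{c},q^{c}+q^{c-1}]$ contains few primes, and since $X^{1-c/3}=o\bigl(X^{1/3+\epsilon}/\log X\bigr)$ one can always select a good prime whose next interval again contains $\gg g$ primes (Lemma~\ref{Lemma-Matomaki2}); iterating this selection is Lemma~\ref{Lemma-Matomaki3}, which also yields the two distinct chains needed for perfectness. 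Note also that non-emptiness needs only $c_{k+1}\geq 2$ (hypothesis~(3) is used only for perfectness), so waiting for the next index with $c_{k+1}\geq 3$ is not the intended mechanism. To repair your proof you would need to import this ``many primes, few exceptions'' selection argument (or an equivalent almost-all result for primes in $[n,n+n^{1/2}]$ together with bookkeeping of how many candidates survive each step); without it, the central step of the theorem is unproven.
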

We will prove Theorem~\ref{Theorem-topology} in Section~\ref{Section-Topology}. From this theorem, $\mathcal{W}(c_k)\cap [0, a]$ is homeomorphic to the Cantor middle third set $\mathcal{C}$ for every sufficiently large $a\in \mathbb{R}\setminus \mathcal{W}(c_k)$ if $(c_k)_{k\in \mathbb{N}}$ satisfies all the conditions in the theorem. We will give the details in Remark~\ref{Remark-Topology}.  

The second goal is to reveal the algebraic independence of subsets of $\subpartial \mathcal{W}(c_k)$.

\begin{theorem}\label{Theorem-main1}Let $r\in \mathbb{N}$. Let $(c_k)_{k\in\mathbb{N}}$ be a sequence of integers satisfying that
\begin{enumerate}
\item \label{Condition-c1}$c_1\geq 1$;
\item \label{Condition-c2} $c_{k+1}\geq 2$ for all $k\in \mathbb{N}$;
\item \label{Condition-Ck} $\limsup_{k\rightarrow \infty} c_{k+1} C_k^{1-r}=\infty.$
\end{enumerate}
Then for all $A_1\in \subpartial \mathcal{W}(c_k)$, $A_2,\ldots , A_r\in \subpartial_L\mathcal{W}(c_k)$ with $A_1<A_2<\cdots <A_r$, the set 
\[
\{A_1,\ldots, A_r\}
\]
 is algebraically independent. In particular, all elements in $\subpartial \mathcal{W}(c_k)$ are transcendental. Especially, the minimum of $\mathcal{W}(c_k)$ is transcendental. 
\end{theorem}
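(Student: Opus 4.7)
My plan is to combine a sharp Diophantine approximation for elements of $\subpartial \mathcal{W}(c_k)$ with a multivariable Liouville inequality applied to the compositum of the corresponding radical number fields.

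First I would prove the key approximation lemma: for every $A\in \subpartial_L\mathcal{W}(c_k)$, setting $p_k=\lfloor A^{C_k}\rfloor$ (prime because $A\in \mathcal{W}(c_k)$) and $\beta_k=p_k^{1/C_k}$, there exist infinitely many $k$ satisfying
\[
0\le A-\beta_k \ll A^{-C_{k+1}+O(C_k)}.
\]
The argument uses the gap $U=(a,A)$ with $\sup U=A$: since $B\notin \mathcal{W}(c_k)$ for every $B\in U$, some level $\ell=\ell(B)$ must have $\lfloor B^{C_\ell}\rfloor$ not prime, and for $B$ close enough to $A$ this forces $\ell\to\infty$ together with $B^{C_\ell}<\lfloor A^{C_\ell}\rfloor$, whence $\{A^{C_\ell}\}$ is small. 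Coupling this with the primality of $\lfloor A^{C_{\ell+1}}\rfloor$ and expanding $A^{C_{\ell+1}}=(A^{C_\ell})^{c_{\ell+1}}$ around $p_\ell^{c_{\ell+1}}$ ties the smallness of $\{A^{C_\ell}\}$ quantitatively to the size of $c_{\ell+1}$. A symmetric argument produces $0\le \beta_k-A\ll A^{-C_{k+1}+O(C_k)}$ for $A\in \subpartial_R\mathcal{W}(c_k)$, needed only for $A_1$ if it lies on the right.

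Next, I would assume for contradiction that $\{A_1,\ldots,A_r\}$ is algebraically dependent and pick $P\in \mathbb{Z}[x_1,\ldots,x_r]\setminus\{0\}$ of minimal total degree with $P(A_1,\ldots,A_r)=0$. Setting $\beta_{k,i}=p_{k,i}^{1/C_k}$ with $p_{k,i}$ the prime supplied by the first step for $A_i$, each $\beta_{k,i}$ is a real algebraic integer whose minimal polynomial over $\mathbb{Q}$ is $x^{C_k}-p_{k,i}$ (irreducible by Eisenstein), so $[\mathbb{Q}(\beta_{k,i}):\mathbb{Q}]=C_k$ and the compositum $K_k:=\mathbb{Q}(\beta_{k,1},\ldots,\beta_{k,r})$ satisfies $[K_k:\mathbb{Q}]\le C_k^r$. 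The multivariable mean value theorem on a fixed compact neighborhood of $(A_1,\ldots,A_r)$ produces the upper bound
\[
\bigl|\gamma_k\bigr|:=\bigl|P(\beta_{k,1},\ldots,\beta_{k,r})\bigr|=\bigl|P(A_1,\ldots,A_r)-P(\beta_{k,1},\ldots,\beta_{k,r})\bigr|\ll \max_i|A_i-\beta_{k,i}|\ll A_r^{-C_{k+1}+O(C_k)}
\]
for infinitely many $k$; and $\gamma_k\in K_k$ is an algebraic integer whose Galois conjugates $P(\zeta_1\beta_{k,1},\ldots,\zeta_r\beta_{k,r})$ (with $\zeta_i$ $C_k$-th roots of unity) have modulus $O(A_r^{\deg P})$, so $|N_{K_k/\mathbb{Q}}(\gamma_k)|\ge 1$ delivers the Liouville-type lower bound $|\gamma_k|\ge A_r^{-D\,C_k^r}$ whenever $\gamma_k\ne 0$, with $D$ depending only on $P$.

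It then remains to verify $\gamma_k\ne 0$ for infinitely many $k$. I would argue by minimality: if $P(\beta_{k,1},\ldots,\beta_{k,r})=0$ held for every large $k$, then, using that $\beta_{k,i}\to A_i$ and that the fields $\mathbb{Q}(\beta_{k,i})$ are genuine radical extensions, one extracts an algebraic relation of strictly lower degree on the $A_i$, contradicting the minimality of $P$. Combining the two bounds along such $k$ forces $C_{k+1}\le D\,C_k^r+O(C_k)$, i.e.\ $c_{k+1}\,C_k^{1-r}\le D+o(1)$, which contradicts $\limsup c_{k+1}C_k^{1-r}=\infty$. Specializing to $r=1$ yields the transcendence of every element of $\subpartial \mathcal{W}(c_k)$; and since $\min \mathcal{W}(c_k)$ is the right endpoint of the gap $(1,\min \mathcal{W}(c_k))$ in the complement of $\mathcal{W}(c_k)$, it lies in $\subpartial_L\mathcal{W}(c_k)$ and is therefore transcendental. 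The single most delicate point of the plan is the quantitative approximation in the first step: the gap structure alone only supplies qualitative smallness of $\{A^{C_k}\}$ along a subsequence, and extracting a rate with the correct dependence on $c_{k+1}$ requires a careful bootstrapping between levels $k$ and $k+1$; the non-vanishing of $\gamma_k$ infinitely often is the secondary technical hurdle.
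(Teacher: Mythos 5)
The decisive issue is your first step, and it contains a genuine gap. The rate you claim, $0\le A-\beta_k\ll A^{-C_{k+1}+O(C_k)}$, amounts to asserting that the prime $p_{k+1}=\lfloor A^{C_{k+1}}\rfloor$ lies within roughly $p_k^{O(1)}$ of $p_k^{c_{k+1}}$, i.e.\ that there are primes in intervals $[x,x+x^{O(1/c_{k+1})}]$; no such statement is provable (even under RH), and no boundary/gap argument can deliver it. More importantly, deriving \emph{any} exponential-in-$C_{k+1}$ rate from the gap $U$ below $A$ requires the following: if $p_{k+1}$ were far from $p_k^{c_{k+1}}$, one must exhibit an actual element of $\mathcal{W}(c_k)$ inside $U$, i.e.\ an entire infinite chain of primes $q_{\ell+1},q_{\ell+2},\dots$ with $q_m^{c_{m+1}}\le q_{m+1}<(q_m+1)^{c_{m+1}}-1$ starting from a prime below $p_{\ell+1}$. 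That is a prime-distribution statement, not a topological one: the paper gets it from Theorem~\ref{Lemma-BakerHarmanPintz} (primes in $[x,x+x^{21/40}]$) combined with Matom\"aki's construction (Lemmas~\ref{Lemma-Matomaki2} and \ref{Lemma-Matomaki3}), and accordingly proves only $p_{k+1}\le p_k^{c_{k+1}}+p_k^{\theta c_{k+1}}$, $\theta=21/40$, hence $0\le A-\beta_k\ll \gamma^{(\theta-1)C_{k+1}}$, for infinitely many $k$ with $c_{k+1}\ge 3$ (Lemmas~\ref{Lemma-key} and \ref{Lemma-ConvergenceSpeed}). Your proposed substitute, a ``bootstrapping between levels $k$ and $k+1$,'' is not an argument; as you concede, the gap structure alone only gives the trivial qualitative fact $p_k^{1/C_k}\to A$. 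The good news is that the weaker, provable rate suffices for your endgame: comparing $\gamma^{(\theta-1)C_{k+1}}$ with the Liouville bound $\gamma_1^{-D C_k^{r}}$ still contradicts $\limsup_k c_{k+1}C_k^{1-r}=\infty$. But without the short-interval theorems and the Matom\"aki-type construction, Step 1 is unproven, and once you add them your proof is essentially the paper's (mean-value upper bound versus a norm/Mahler-measure lower bound over the compositum, exactly as in Lemma~\ref{Lemma-quantitative}).

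Two secondary problems. When $A_1\in\subpartial_R\mathcal{W}(c_k)$ the relevant radical is $(p_1(k)+1)^{1/C_k}$ with $p_1(k)+1$ composite, so ``irreducible by Eisenstein'' does not apply; the paper instead bounds $\ord((p+1)^{1/D})\ge D/q$ for a prime factor $q$ of $D$ (Lemma~\ref{Lemma-order}) and feeds this into the independence lemma. Also, your minimality argument for $\gamma_k\ne 0$ is shaky: if $P(\beta_{k,1},\dots,\beta_{k,r})=0$ for all large $k$, passing to the limit merely returns $P(A_1,\dots,A_r)=0$, and it is not clear how a strictly lower-degree relation among the $A_i$ would emerge. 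The clean route is the paper's: once $C_k$ exceeds the degree of $P$ in each variable, successive adjunction of the $C_k$-th roots of the distinct integers $p_{k,1}$ (or $p_{k,1}+1$), $p_{k,2},\dots,p_{k,r}$ has full degree at each step (Lemmas~\ref{Lemma-Independence} and \ref{Lemma-degree}, via Besicovitch--Mordell, using $p_1(k)+1<p_2(k)<\cdots<p_r(k)$ for large $k$), so $\gamma_k\ne 0$ for \emph{every} large $k$ with no minimality device needed.
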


\begin{remark}In the case when $r=1$, Theorem~\ref{Theorem-main1} states that $\{A_1\}$ is algebraically independent for all $A_1\in \subpartial \mathcal{W}(c_k)$, which means that $A_1$ is transcendental.
\end{remark}

\begin{remark}\label{Remark-sub-boundary}
Let $(c_k)_{k\in \mathbb{N}}$ be a sequence of real numbers satisfying the conditions in Theorem~\ref{Theorem-main1}. Then $c_{k+1}\geq 3$ for infinitely many $k\in \mathbb{N}$ by \eqref{Condition-Ck} in Theorem~\ref{Theorem-main1}. Thus, $\mathcal{W}(c_k)$ is non-empty, totally disconnected, and perfect by Theorem~\ref{Theorem-topology}. Hence, $\Conn (\mathbb{R}\setminus \mathcal{W}(c_k))$ is a family of countably infinitely many open intervals. This yields that $\subpartial_L \mathcal{W}(c_k)$ and $\subpartial_R \mathcal{W}(c_k)$ are countably infinite. Further, since $\mathcal{W}(c_k)$ is closed, 
\[
\subpartial_L \mathcal{W}(c_k)\cup \subpartial_R \mathcal{W}(c_k)=\subpartial \mathcal{W}(c_k) \subseteq \partial \mathcal{W}(c_k)\subseteq \mathcal{W}(c_k).
\]  
By this  argument, all elements $A_1,\ldots, A_r$ in Theorem~\ref{Theorem-main1} belong to $\mathcal{W}(c_k)$. Roughly speaking, if $c_k$ is rapidly increasing, Theorem~\ref{Theorem-main1} gives us a countably infinite subset of $\mathcal{W}(c_k)$ which is algebraically independent. 
\end{remark}

\begin{corollary}\label{Corollary-W(k)}
Each $A$ belonging to
\[
 \subpartial \mathcal{W}(k)=\subpartial \left( \{A>1\colon \lfloor A^{k!} \rfloor \text{ is a prime number for every $k\in \mathbb{N}$} \}\right)
 \]
is transcendental. In particular, the minimum of $\mathcal{W}(k)$ is transcendental.   
\end{corollary}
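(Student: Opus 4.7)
The plan is to deduce Corollary~\ref{Corollary-W(k)} as a direct specialization of Theorem~\ref{Theorem-main1} to the sequence $c_k=k$ with $r=1$. With this choice one has $C_k=1\cdot 2\cdots k = k!$, so the set $\mathcal{W}(c_k)$ coincides with the set $\mathcal{W}(k)$ written in the corollary, and the two notations match.

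I would then verify the three hypotheses of Theorem~\ref{Theorem-main1} for this sequence. Condition \eqref{Condition-c1} holds since $c_1=1\geq 1$; condition \eqref{Condition-c2} holds since $c_{k+1}=k+1\geq 2$ for every $k\in\mathbb{N}$; and for condition \eqref{Condition-Ck} with $r=1$, the quantity $c_{k+1}C_k^{1-r}=k+1$ clearly tends to infinity, so the limsup equals $\infty$. Applying Theorem~\ref{Theorem-main1} with $r=1$ therefore gives that every $A_1\in\subpartial\mathcal{W}(k)$ is transcendental, and the transcendence of $\min\mathcal{W}(k)$ follows directly from the final sentence of Theorem~\ref{Theorem-main1}.

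Since the corollary is a pure specialization of the main theorem, I do not anticipate any genuine obstacle. The only point worth spelling out for completeness is that $\min\mathcal{W}(k)$ exists and belongs to $\subpartial\mathcal{W}(k)$. Existence follows from Theorem~\ref{Theorem-topology} (whose hypotheses are clearly met by $c_k=k$), which makes $\mathcal{W}(k)$ non-empty and closed, combined with the elementary observation that $\mathcal{W}(k)\subseteq[2,\infty)$ since $\lfloor A\rfloor$ must be prime. Writing $m=\min\mathcal{W}(k)$, the interval $(-\infty,m)$ is a connected component of $\mathbb{R}\setminus\mathcal{W}(k)$ whose supremum is $m$, so $m\in\subpartial_L\mathcal{W}(k)\subseteq\subpartial\mathcal{W}(k)$, and the transcendence of $m$ is then an immediate consequence of the first assertion.
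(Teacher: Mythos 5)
Your proposal is correct and is essentially the paper's own proof: specialize Theorem~\ref{Theorem-main1} to $c_k=k$ with $r=1$, checking that $c_1=1$, $c_{k+1}=k+1\geq 2$, and $c_{k+1}C_k^{1-r}=k+1\to\infty$. The extra remarks about the existence of $\min\mathcal{W}(k)$ and its membership in $\subpartial_L\mathcal{W}(k)$ are accurate and simply make explicit what the paper handles via Remark~\ref{Remark-min} and the final clause of Theorem~\ref{Theorem-main1}.
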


\begin{proof}
Let $c_k:=k$ for all $k\in \mathbb{N}$. Then $(c_k)_{k\in \mathbb{N}}$ satisfies \eqref{Condition-c1},
\eqref{Condition-c2}, and \eqref{Condition-Ck} with $r=1$ in Theorem~\ref{Theorem-main1}. Therefore, we obtain the corollary.  
\end{proof}

In Proposition~\ref{Proposition-linearly-ind}, we will state that for every $A_1\in \subpartial \mathcal{W}(k)$ and $A_2 \in \subpartial_L \mathcal{W}(k)$ with $A_1<A_2$, the set $\{A_1, A_2\}$ is linearly independent over $\mathbb{Q}$. In Proposition~\ref{Proposition-k!}, we will compute that the minimum of $\mathcal{W}(k)$ is $2.24199\cdots$ under the Riemann hypothesis.   

\begin{corollary}\label{Corollary-W(3k!)}
The set 
\[
\subpartial_L \left(\{A>1\colon \lfloor A^{3^{k!}} \rfloor \text{ is a prime number for every $k\in \mathbb{N}$}   \}\right) 
\]
is algebraically independent.
\end{corollary}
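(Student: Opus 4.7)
The plan is to reduce Corollary~\ref{Corollary-W(3k!)} to Theorem~\ref{Theorem-main1} by choosing the sequence $(c_k)_{k\in\mathbb{N}}$ so that the partial products $C_k$ give exactly the exponents $3^{k!}$, and then checking all three hypotheses, crucially condition~\eqref{Condition-Ck}, for \emph{every} $r\in\mathbb{N}$.

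First I would set $c_1 := 3$ and $c_k := 3^{(k-1)!\,(k-1)}$ for $k\geq 2$. A direct telescoping shows
\[
C_k=c_1\cdots c_k=3^{1+\sum_{j=2}^{k}(j-1)!(j-1)}=3^{k!},
\]
using the identity $\sum_{j=2}^{k}(j-1)!(j-1)=k!-1$. Hence, with this choice of $(c_k)_{k\in\mathbb{N}}$, the set $\mathcal{W}(c_k)$ coincides with the set appearing in the statement of the corollary. Since the $c_k$ are positive integers with $c_1\geq 1$ and $c_{k+1}\geq 2$ for all $k\in\mathbb{N}$, hypotheses~\eqref{Condition-c1} and \eqref{Condition-c2} of Theorem~\ref{Theorem-main1} are clearly satisfied.

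Next I would verify hypothesis~\eqref{Condition-Ck} for every fixed $r\in\mathbb{N}$. We compute
\[
c_{k+1}\,C_k^{1-r}=3^{k!\cdot k}\cdot 3^{k!(1-r)}=3^{k!(k+1-r)}.
\]
For any fixed $r$, once $k\geq r$ we have $k+1-r\geq 1$, so the exponent $k!(k+1-r)$ tends to $+\infty$ as $k\to\infty$, and therefore $\limsup_{k\to\infty}c_{k+1}C_k^{1-r}=\infty$. Consequently, Theorem~\ref{Theorem-main1} applies for every $r\in\mathbb{N}$.

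Finally, to obtain algebraic independence of the (infinite) set $\subpartial_L \mathcal{W}(c_k)$, I would take an arbitrary non-empty finite subset $\{B_1,\ldots,B_r\}\subseteq \subpartial_L \mathcal{W}(c_k)$ with $B_1<B_2<\cdots<B_r$. Because $\subpartial_L \mathcal{W}(c_k)\subseteq \subpartial \mathcal{W}(c_k)$, we may set $A_1:=B_1\in\subpartial \mathcal{W}(c_k)$ and $A_i:=B_i\in \subpartial_L \mathcal{W}(c_k)$ for $2\leq i\leq r$; Theorem~\ref{Theorem-main1} then yields that $\{B_1,\ldots,B_r\}$ is algebraically independent. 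Since every non-empty finite subset of $\subpartial_L\mathcal{W}(c_k)$ is algebraically independent, the whole set $\subpartial_L\mathcal{W}(c_k)$ is algebraically independent, completing the proof. There is no real obstacle here beyond the bookkeeping of the telescoping product $C_k=3^{k!}$ and the observation that the factorial growth of the exponent dominates any fixed polynomial power of $C_k$.
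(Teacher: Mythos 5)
Your proof is correct and essentially identical to the paper's: the paper also chooses $c_1=3$ and $c_k=3^{k!-(k-1)!}=3^{(k-1)!(k-1)}$ so that $C_k=3^{k!}$, verifies $c_{k+1}C_k^{1-r}=3^{k!(k+1-r)}\to\infty$ for each fixed $r$, and invokes Theorem~\ref{Theorem-main1}. Your extra spelling-out of the finite-subset reduction is just the definition of algebraic independence for infinite sets, which the paper leaves implicit.
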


\begin{proof}Let $c_1=3^1$ and $c_k=3^{k!-(k-1)!}$ for all $k\geq 2$. Then $C_k=c_1\cdots c_k=3^{k!}$. Fix any $r\in \mathbb{N}$. It can be observed that
\begin{align*}
\frac{\log (c_{k+1} C_k^{1-r})}{\log 3}= k\cdot k! - (r-1) \cdot k!  = k!(k-(r-1))
 \rightarrow \infty  \quad \text{(as $k\rightarrow \infty$)}.
\end{align*}
Thus, $c_{k+1} C_k^{1-r}\rightarrow \infty$ as $k\rightarrow \infty$. Hence, $(c_k)_{k\in\mathbb{N}}$ satisfies \eqref{Condition-c1}, \eqref{Condition-c2}, and \eqref{Condition-Ck} in Theorem~\ref{Theorem-main1}. Therefore, $\subpartial_L \mathcal{W}(c_k)$ is algebraically independent by Theorem~\ref{Theorem-main1}.  
\end{proof}
Let $(c_k)_{k\in\mathbb{N}}$ be as in the proof of Corollary~\ref{Corollary-W(3k!)}. In Proposition~\ref{Proposition-3k!}, we will unconditionally compute that the minimum of $\mathcal{W}(c_k)$  is $1.30529\cdots$. We will also compute several PRCs in $\subpartial_L \mathcal{W}(c_k)$.

 We lastly give historical notes on the geometry of $\mathcal{W}(c_k)$. The geometric properties on $\mathcal{W}(c_k)$ were first studied by Wright. Based on his results \cite[Theorem 5,6,7, Section 6]{Wright54}, for all real sequences $(c_k)_{k\in \mathbb{N}}$ with $c_k\geq 2.658$ for all $k\in \mathbb{N}$, the set $\mathcal{W}(c_k)$ is uncountable, nowhere dense and has Lebesgue measure $0$. The admissible lower bounds for $c_k$ are closely related to problems on the distribution of prime numbers in short intervals. By a construction of Mills, if $\gamma>0$ satisfies that the interval $[x,x+x^\gamma]$ contains at least one prime number for every sufficiently large $x>0$, then the set $\mathcal{W}(c_k)$ is non-empty for all real sequences $(c_k)_{k\in \mathbb{N}}$ with 
 \[
 c_k \geq 1/(1-\gamma) \quad (k\in \mathbb{N}).
 \]
  Assuming the Riemann hypothesis\footnote{Actually, it is enough to assume the density hypothesis to obtain $\gamma=1/2+\epsilon$. Assuming the Riemann hypothesis, Cram\'er proved that there exists $B>0$ such that the interval $[x, x+B x^{1/2}\log x ]$ contains a prime number for every large $x>0$ \cite{Cramer}.}, $\gamma$ can be taken as $1/2+\epsilon$.   Therefore, we expect that the admissible lower bound of $c_k$ would be $2+\epsilon$. Matom\"aki gave an interesting improvement. Surprisingly, she could unconditionally show that the result of Wright is still true if we replace $c_k\geq 2.658$ with $c_k\geq 2$ \cite[Theorem~3]{Matomaki}. Recently, the first author of this article studied the fractal geometric structure of $\mathcal{W}(c_k)$ and showed that $\mathcal{W}(c_k)$ has Hausdorff dimension $1$ if $c_k\geq 2$ is bounded in $k$\cite{Saito}.

\begin{notation}
Let $\mathbb{N}$ be the set of all positive integers, $\mathbb{Z}$ be the set of all integers, $\mathbb{Q}$ be the set of all rational numbers, $\mathbb{R}$ be the set of all real numbers, and $\mathbb{C}$ be the set of all complex numbers. For all sets $X$, let us denote as $\# X$ the cardinality of $X$ and as $\mathcal{P}$ the set of all prime numbers.  Also, we define $[\ell]=\mathbb N\cap[1,\ell]$.
\end{notation}

\section{Outline of the article and sketch of the proof of Theorem~\ref{Theorem-main1}}

Define $\theta=21/40$, and $g(x)=x/\log x$ for all $x>1$. We start with the following two theorems.

\begin{theorem}
\label{Lemma-BakerHarmanPintz} There exists a constant $d_0>0$ such that
\[
 \# ([x, x+x^{\theta}] \cap \mathcal{P})  \geq d_0 g(x^\theta)
 \]
for sufficiently large $x>0$.
\end{theorem}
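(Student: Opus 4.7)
The statement to be proved is the Baker--Harman--Pintz theorem on primes in short intervals (with exponent $\theta = 21/40 = 0.525$), so the plan is to invoke the sieve-theoretic framework developed by Baker, Harman, and Pintz rather than to devise anything new. At the heart of that framework is Harman's sieve, which expresses the prime indicator $\mathbf{1}_{\mathcal{P}}$ as a carefully chosen linear combination of simpler arithmetic functions whose averages over the short interval $[x,x+x^{\theta}]$ can be controlled.

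First I would set up a Harman-style decomposition of the prime-counting sum
\[
 S(x) = \sum_{x < n \leq x+x^\theta} \mathbf{1}_{\mathcal{P}}(n),
\]
writing it via Buchstab iteration as an alternating sum of sieve-type quantities indexed by dyadic ranges of variables $m$ and $n$ with $mn \in [x, x+x^\theta]$. After a few iterations this produces two kinds of inner sums: \emph{Type I} sums $\sum_{m \sim M}\sum_{n \sim N} a_m \mathbf{1}_{[x,x+x^\theta]}(mn)$ with arbitrary bounded coefficients $a_m$, and \emph{Type II} sums $\sum_{m\sim M}\sum_{n\sim N} a_m b_n \mathbf{1}_{[x,x+x^\theta]}(mn)$ with both $a_m$ and $b_n$ bounded. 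The asymptotic density $d_0\,g(x^\theta)$ emerges from the contributions of the sieve weights that can be matched to an asymptotic main term, while the remaining terms have to be kept small.

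Next I would estimate the Type I and Type II sums using sharp exponential-sum bounds over short intervals. The Type I sums reduce to divisor-type problems handled by van der Corput's method; the Type II sums are harder and require triple exponential sum estimates of Watt, Heath-Brown--Huxley, and Kolesnik, ultimately giving non-trivial bounds provided $M$ and $N$ lie in certain admissible ranges depending on $\theta$. The main obstacle is the delicate combinatorial optimization that shows every dyadic range $(M,N)$ arising in the Buchstab decomposition can be handled by either a Type I or a Type II estimate, and it is precisely at the threshold $\theta = 21/40$ that these admissible regions cover the full range.

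Because the bookkeeping that assembles these Type I/Type II estimates into the final inequality $\# ([x, x+x^\theta]\cap \mathcal{P}) \geq d_0 g(x^\theta)$ occupies essentially the entire paper of Baker, Harman, and Pintz, I would not reproduce it here; instead I would cite that result directly and merely verify that its statement specializes to the form given above (with $d_0$ any constant strictly smaller than the density constant they extract). This is the standard approach in applications, and it suffices for how the theorem is used in the remainder of the article.
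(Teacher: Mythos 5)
Your proposal is correct and ultimately does exactly what the paper does: the paper's entire proof of this theorem is a citation to Baker--Harman--Pintz, and your sketch of Harman's sieve with Type I/II estimates is just expository framing before you likewise cite that result and check it specializes to the stated form with $g(x^\theta)=x^\theta/(\theta\log x)$ absorbed into $d_0$.
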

\begin{proof}
See \cite{BakerHarmanPintz}.
\end{proof}

\begin{theorem}\label{Lemma-Matomaki}
There exist positive constants $d_1 < 1$ and $D$ such that, for every
sufficiently large $x$ and every $\gamma \in [1/2, 1]$, the interval $[x, 2x]$ contains at most
$Dx^{2/3-\gamma}$ disjoint intervals $[n, n + n^\gamma]$ for which
\[
\# ([n,n+n^\gamma] \cap \mathcal{P})  \leq d_1 g(n^\gamma).
\]
\end{theorem}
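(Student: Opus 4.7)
The plan is to attack Theorem~\ref{Lemma-Matomaki} by a second-moment argument, combining a variance bound for primes in short intervals with the disjointness hypothesis. The underlying idea is that many disjoint intervals, each containing anomalously few primes, must collectively force the integrated variance of $\psi(t+t^\gamma)-\psi(t)$ to be large, which a mean-square estimate then bounds from above.

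First, I would convert the hypothesis into a statement about $\psi$. The bound $\#([n,n+n^\gamma]\cap\mathcal{P}) \leq d_1 g(n^\gamma)$ gives $\psi(n+n^\gamma) - \psi(n) \leq (d_1/\gamma)\, n^\gamma + O(n^\gamma/\log n)$, so by fixing $d_1$ slightly below $\gamma$ (using $\gamma \geq 1/2$) I obtain a uniform defect $\psi(n+n^\gamma) - n^\gamma - \psi(n) \leq -\eta\, n^\gamma$ for some fixed $\eta>0$ on every bad interval. Monotonicity of $\psi$ propagates this defect: for $t$ in a subinterval $I_i \subseteq [n_i, n_i + n_i^\gamma]$ of length $\asymp n_i^\gamma$, one still has $|\psi(t+t^\gamma) - \psi(t) - t^\gamma| \geq (\eta/2)\, t^\gamma$.

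Second, since the original intervals are disjoint, so are the $I_i$. Integrating squares and summing over $N$ disjoint bad intervals in $[x,2x]$ yields
\[
\sum_{i=1}^N \int_{I_i} \bigl(\psi(t+t^\gamma) - \psi(t) - t^\gamma\bigr)^2 \, dt \;\gg\; N\, x^{3\gamma}.
\]
One then compares with an upper bound of the form
\[
\int_x^{2x} \bigl(\psi(t+t^\gamma) - \psi(t) - t^\gamma\bigr)^2 \, dt \;\ll\; x^{1+\gamma}(\log x)^A,
\]
available in the range $\gamma\in[1/2,1]$ via zero-density estimates for $\zeta$, in the style of Selberg and Huxley. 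Rearranging gives $N \ll x^{1-2\gamma}(\log x)^A$, and since $1-2\gamma \leq 2/3-\gamma$ throughout $\gamma\in[1/2,1]$, absorbing the logarithmic factor into a constant (or weakening the exponent infinitesimally) yields the stated bound $N \leq D x^{2/3-\gamma}$.

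The principal obstacle is the mean-square estimate itself: the elementary part of the argument (converting the bad-interval condition into a pointwise lower bound on the squared defect, then exploiting disjointness) is routine, but obtaining an unconditional variance bound uniformly down to $\gamma=1/2$ is the serious analytic input and is where zero-density technology enters. Secondary technicalities such as the boundary case $n_i+n_i^\gamma > 2x$, the conversion between the raw prime count and $\psi$ (prime-power corrections of size $O(\sqrt{n})$), and the choice of $d_1<1$ can all be absorbed safely into the error terms above.
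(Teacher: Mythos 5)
Your reduction to a variance statement is the right instinct, but the key analytic input you invoke does not exist unconditionally. The mean-square bound $\int_x^{2x}\bigl(\psi(t+t^\gamma)-\psi(t)-t^\gamma\bigr)^2\,dt \ll x^{1+\gamma}(\log x)^A$ uniformly down to $\gamma=1/2$ is essentially Selberg's estimate, which requires the Riemann Hypothesis; the theorem here must be unconditional. What zero-density technology (Huxley) gives unconditionally, and only for $\gamma\ge 1/6+\epsilon$, is a bound of the shape $\ll x^{1+2\gamma}(\log x)^{-A}$, i.e.\ a logarithmic saving over the trivial bound $h^2x$ with $h=x^\gamma$, not the square-root--type cancellation you assume. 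Feeding the unconditional bound into your disjointness argument gives only $N \ll x^{1-\gamma}(\log x)^{-A}$ bad intervals, which misses the required $Dx^{2/3-\gamma}$ by a factor of about $x^{1/3}$ (at $\gamma=1/2$ you would get roughly $x^{1/2}$ instead of $x^{1/6}$). So the second-moment route cannot reach the exponent $2/3-\gamma$ with current unconditional knowledge; that exponent is precisely the hard content of the statement, not a technicality to be absorbed.

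For comparison, the paper does not reprove this result: it quotes it from Matom\"aki (Lemma~9 of her paper on prime-representing functions, together with the proof of Lemma~1.2 of her 2007 paper on large differences between consecutive primes). Those proofs do not go through an $L^2$ estimate for $\psi$ in short intervals; they use Heath--Brown's identity to decompose the prime indicator into Dirichlet polynomials and then apply mean-value and large-value estimates for those polynomials to bound the size of the exceptional set, which is where the exponent $2/3$ comes from. Your elementary preprocessing (converting the hypothesis into a $\psi$-defect, propagating it to a subinterval of length a small constant times $n^\gamma$ chosen so that Brun--Titchmarsh cannot refill the defect, and exploiting disjointness) is fine, but the proposal as written rests on an unavailable variance estimate and therefore has a genuine gap.
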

\begin{proof}
See \cite[Lemma~9]{Matomaki} and the proof of \cite[Lemma~1.2]{Matomaki2007}.
\end{proof}

The first goal is to determine the topological structure of $\mathcal{W}(c_k)$. By combining Theorem~\ref{Lemma-BakerHarmanPintz}, Theorem~\ref{Lemma-Matomaki}, and the construction method in \cite{Matomaki}, we will provide several lemmas in Section~\ref{Section-Matomaki}. Further, by these lemmas, we will disclose that $\mathcal{W}(c_k)$ is non-empty, totally disconnected, and perfect in Section~\ref{Section-Topology}. 

After Section~\ref{Section-UpperBounds}, we will discuss the algebraic independence of subsets of $\subpartial \mathcal{W}(c_k)$. Let us here give a sketch of the proof of Theorem~\ref{Theorem-main1}. Let $(c_k)_{k\in \mathbb{N}}$ be a sequence of integers satisfying the conditions in Theorem~\ref{Theorem-main1}. Let $r$ be a positive integer. Let $A_1\in \subpartial\mathcal{W}(c_k)$ and $A_2 ,\ldots,A_r \in \subpartial_L \mathcal{W}(c_k)$. Suppose that $A_1<\cdots <A_r$. Take any non-zero polynomial $P\in \mathbb{Z}[x_1,\ldots,x_r ]$. Let $p_j(k)=\lfloor A_j^{C_k}\rfloor$. For all $k\in \mathbb{N}$, let 
\[
\alpha_1=\alpha_1(k)=
\begin{cases}
(p_1(k))^{1/C_k}&\quad \text{if $A_1\in \subpartial_L \mathcal{W}(c_k)$},\\
(p_1(k)+1)^{1/C_k}&\quad \text{if $A_1\in \subpartial_R \mathcal{W}(c_k)$},  
\end{cases}
\]
and $\alpha_j=\alpha_j(k)=p_j(k)^{1/C_k}$ for all $2\leq j\leq r$ . Since $\mathcal{W}(c_k)$ is closed by Theorem~\ref{Theorem-topology},  $\subpartial \mathcal{W}(c_k)\subseteq \mathcal{W}(c_k)$. Thus, all the integers $p_j(k)$ are prime numbers. By the triangle inequality, 
\begin{equation}\label{Inequality-P(A1,A2,...,Ar)}
 |P(A_1,\ldots , A_r)| \geq |P(\alpha_1,\ldots , \alpha_r)| -|P(A_1,\ldots , A_r)- P(\alpha_1,\ldots , \alpha_r)|=: S-T.
\end{equation}
We will prove that $T\leq S/2 $ for some large $k\in \mathbb{N}$. 
Assuming the above to be true, then one has $|P(A_1,\ldots , A_r)|\geq S/2>0$. This yields that $\{A_1,\ldots, A_r\}$ is algebraically independent.

In order to evaluate the upper bounds for $T$ in \eqref{Inequality-P(A1,A2,...,Ar)}, we will observe that each $A_j\in \subpartial \mathcal{W}(c_k)$ is very near $\alpha_j$. Actually, there exist constants $\eta_j>0$ and $\gamma_j>1$ depending only on $A_j$ and $c_1$ such that
\[
|A_j -\alpha_j(k)|\leq \eta_j \cdot  \gamma_j^{(\theta-1)C_{k+1}}
\]
for all $j\in [r]$ and for infinitely many $k\in \mathbb{N}$. We will prove this inequality in Section~\ref{Section-UpperBounds}. Using this inequality and the mean value theorem, we obtain the upper bounds for $T$. 

 On the other hand, since all the integers $p_j(k)$ are prime numbers, the set $\{\alpha_1(k), \ldots ,\alpha_r(k)\}$ is independent in the sense that 
\begin{equation*}\label{Inequality-outlineP}
S=|P(\alpha_1(k),\ldots , \alpha_r(k))|>0
\end{equation*}
if $k$ is sufficiently large. Furthermore, we will obtain the quantitative lower bounds for $S$ in Section~\ref{Section-proof}. These lower bounds are very small. However, by comparing bounds for $S$ and $T$ in \eqref{Inequality-P(A1,A2,...,Ar)}, we will reach the desired inequality $T\leq S/2$ if $k$ is sufficiently large.  The key idea is that the set $\{A_1,\ldots ,A_r\}$ inherits algebraic independence from $\{\alpha_1(k),\ldots, \alpha_r(k)\}$. This idea comes from \cite{AlkauskasDubickas}.

In Section~\ref{Section-UpperBounds}, we will provide lemmas to evaluate upper bounds for $T$ in \eqref{Inequality-P(A1,A2,...,Ar)}. In Section~\ref{Section-LowerBounds}, we will present lemmas to evaluate lower bounds for $S$. Combining them, in Section~\ref{Section-proof}, we will  complete the proof of Theorem~\ref{Theorem-main1}. Furthermore, in Section~\ref{Section-Further}, we also report results on the rational approximation, $\mathbb{Q}$-linear independence, and numerical calculations of elements in $\mathcal{W}(c_k)$.

\section{Matom\"aki's construction}\label{Section-Matomaki}. 

\begin{lemma}\label{Lemma-Matomaki2}
Let $c\geq 2$ and $B>2$ be real numbers with $2\leq c\leq B$. Let $0<\epsilon<2/3$ also be a real number. Let $X$ and $Y$ be positive real numbers with 
\[
X^{1/3+\epsilon} \leq Y-X\leq ((3/2)^{1/B}-1)X.
\]
 Suppose that there exists a constant $d_2>0$ such that $\#([X, Y]\cap \mathcal{P})\geq  d_2 g(Y-X). $
Then there exists $X_0=X_0(d_2,B,\epsilon)>0$ such that if $X\geq X_0$, then we can find distinct prime numbers $q_1, q_2\in [X , Y]$ satisfying 
that for all $j\in \{1,2\}$ 
\[
\#([q_j^{c}, q_j^{c} + q_j^{c-1}]\cap \mathcal{P})\geq  d_1 g(q_j^{c-1}). 
\]
\end{lemma}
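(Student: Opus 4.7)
The plan is to apply Matom\"aki's theorem (Theorem~\ref{Lemma-Matomaki}) to bound the number of ``bad'' primes $q \in [X,Y]$, namely those for which $\#([q^c, q^c + q^{c-1}] \cap \mathcal{P}) \leq d_1\,g(q^{c-1})$, and to compare this count with the lower bound on $\#([X,Y]\cap\mathcal{P})$ supplied by the hypothesis.

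First I would set $\gamma := (c-1)/c$; since $c \geq 2$, this lies in $[1/2, 1)$. For each prime $q \in [X,Y]$, the interval $[q^c, q^c + q^{c-1}]$ is of the form $[n, n + n^{\gamma}]$ with $n = q^c$, and I would next verify that, for $X \geq 3$, all such intervals lie inside $[X^c, 2X^c]$: the bound $Y \leq (3/2)^{1/B}X$ together with $c \leq B$ gives $Y^c \leq (3/2)X^c$ and $Y^{c-1} \leq (3/2)X^{c-1}$, hence
\[
Y^c + Y^{c-1} \leq \tfrac{3}{2}X^c\bigl(1 + 1/X\bigr) \leq 2X^c.
\]
Moreover, for distinct primes $q < q'$ in $[X,Y]$, the mean value theorem yields $(q')^c - q^c \geq c\,q^{c-1}(q' - q) \geq 2q^{c-1} > q^{c-1}$, so the intervals $[q^c, q^c + q^{c-1}]$ for different primes $q \in [X,Y]$ are pairwise disjoint.

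Applying Theorem~\ref{Lemma-Matomaki} with $x = X^c$ and $\gamma = (c-1)/c$, the number of bad primes $q \in [X,Y]$ is at most
\[
D\,(X^c)^{2/3 - (c-1)/c} = D\,X^{1 - c/3} \leq D\,X^{1/3},
\]
the last inequality holding uniformly for $c \in [2,B]$. On the other hand, since $g$ is eventually increasing and $Y - X \geq X^{1/3+\epsilon}$, the hypothesis yields
\[
\#([X,Y]\cap\mathcal{P}) \geq d_2\,g(Y-X) \geq \frac{d_2\,X^{1/3+\epsilon}}{(1/3+\epsilon)\log X}
\]
for $X$ large. Hence the number of good primes in $[X,Y]$ is at least
\[
\frac{d_2\,X^{1/3+\epsilon}}{(1/3+\epsilon)\log X} - D\,X^{1/3},
\]
which tends to infinity with $X$ since $1/3+\epsilon > 1/3$. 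Choosing $X_0 = X_0(d_2, B, \epsilon)$ so that this difference exceeds $2$ for $X \geq X_0$ then produces two distinct good primes $q_1, q_2 \in [X,Y]$, as required. There is no essential obstacle beyond bookkeeping of constants; the only point to watch is keeping the dependence on $c \in [2,B]$ uniform, which is handled by the absolute bound $X^{1-c/3} \leq X^{1/3}$, and that the requirement in Theorem~\ref{Lemma-Matomaki} that $x = X^c$ be sufficiently large is automatic from $X \geq X_0$ because $c \geq 2$.
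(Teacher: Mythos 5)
Your proposal is correct and follows essentially the same route as the paper: apply Theorem~\ref{Lemma-Matomaki} with $x=X^c$, $\gamma=1-1/c$, check that all intervals $[q^c,q^c+q^{c-1}]$ with $q\in[X,Y]\cap\mathcal{P}$ lie in $[X^c,2X^c]$, and compare the bound $DX^{1-c/3}$ on bad primes with the lower bound $d_2 g(Y-X)\geq d_2 X^{1/3+\epsilon}/\log X^{1/3+\epsilon}$ to extract two good primes. Your explicit verification of the pairwise disjointness of the intervals is a small bookkeeping step the paper leaves implicit, but otherwise the arguments coincide.
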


\begin{proof}
Take a sufficiently large parameter $X_0=X_0(d_2,B,\epsilon)$. We assume that $X\geq X_0$. Let $R= [X , Y]\cap \mathcal{P}$. By Theorem~\ref{Lemma-Matomaki} with $x:=X^{c}$ and $\gamma:=1-1/c$, the interval $[X^c, 2X^c]$ contains at most $DX^{c(2/3- (1-1/c))}$ disjoint intervals of the form
\[
[q^{c}, q^{c}+q^{c-1} ]\quad (q\in R)
\]
for which $\# ([q^{c}, q^{c}+q^{c-1} ]\cap \mathcal{P}) \leq d_1 g(q^{c-1}).$ Here we observe that for all $q\in R$
\begin{align*}
&X^{c}\leq q^{c} < q^{c} + q^{c-1} \leq Y^{c} + Y^{c-1}\leq Y^c (1+X^{-1})\\
&=(X+(Y-X))^c (1+X^{-1})\leq X^c (1+ (Y-X)/X)^B (1+X^{-1}) \leq 2X^c 
\end{align*}
since $Y-X\leq ((3/2)^{1/B}-1)X$, $X\geq X_0$, and $X_0=X_0(d_2,B,\epsilon)$ is sufficiently large. Thus, all intervals $[q^{c}, q^{c}+q^{c-1}]$ $(q\in R)$ are contained in $[X^{c}, 2X^{c} ]$. Hence, at least 
\begin{equation}\label{Equation-section2-1}
d_2 g(Y-X)- DX^{c(2/3- (1-1/c))} \geq d_2 \frac{X^{1/3+\epsilon}}{\log X^{1/3+\epsilon} }-DX^{1-c/3}
\end{equation}
intervals $[q^{c}, q^{c}+q^{c-1}]$ $(q\in R)$ satisfy that $\#([q^{c}, q^{c} + q^{c-1}]\cap \mathcal{P})\geq  d_1 g(q^{c-1})$. We see that the right-hand side of \eqref{Equation-section2-1} is greater than or equal to $2$ since $1-c/3\leq 1/3<1/3+\epsilon$ and $X\geq X_0$. Hence, there exist distinct  $q_1,q_2 \in R$ such that $\#([q_j^{c}, q_j^{c} + q_j^{c-1}]\cap \mathcal{P})\geq  d_1 g(q_j^{c-1})$ for all $j\in\{1,2\}$.
\end{proof}

\begin{lemma}\label{Lemma-Matomaki3}
Let $(e_k)_{k=2}^\infty$ be a real sequence with $e_k\geq 2$ for all $k\in \mathbb{N}$. Let $B=1/(1-\theta)$. Let $X$ and $Y$ be positive real numbers with $X^{1/2} \leq Y-X\leq ((3/2)^B-1)X$. Assume that there exists $d_3>0$ such that  
\begin{equation}\label{Inequality-I1-Matomaki3}
\#([X, Y]\cap \mathcal{P}) \geq d_3 g(Y-X).  
\end{equation}
Then there exists a large $X_0(d_3)>0$ such that if $X\geq X_0$, then we can find sequences of prime numbers $(q_{k,1})_{k=1}^\infty$ and $(q_{k,2})_{k=1}^\infty$ such that $X\leq q_{1,1} <q_{1,2}\leq Y$ and for all $j\in \{1,2\}$ and $k\in \mathbb{N}$  
\begin{equation}\label{Inequality-Matomaki}
q_{k,j}^{e_{k+1}} \leq q_{k+1,\: j}< (q_{k,j}+1)^{e_{k+1}}-1 .   
\end{equation}
\end{lemma}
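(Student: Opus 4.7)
My plan is to construct the two sequences recursively: first produce two starting primes $q_{1,1}<q_{1,2}$ in $[X,Y]$ satisfying a suitable density condition, and then iterate Lemma~\ref{Lemma-Matomaki2} along each of the two sequences. The density condition I will maintain at step $k$ for each $j\in\{1,2\}$ is that
\[
 \#\bigl([q_{k,j}^{e_{k+1}},\, q_{k,j}^{e_{k+1}}+q_{k,j}^{e_{k+1}-1}]\cap \mathcal{P}\bigr)\geq d_1\, g(q_{k,j}^{e_{k+1}-1}),
\]
which is precisely the hypothesis needed to apply Lemma~\ref{Lemma-Matomaki2} with $c=e_{k+2}$ at the next step.

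For the \emph{initial step}, I cannot invoke Lemma~\ref{Lemma-Matomaki2} directly on $[X,Y]$ because the bound $Y-X\leq ((3/2)^B-1)X$ is weaker than the bound $Y-X\leq ((3/2)^{1/B'}-1)X$ required there. Instead, I will adapt its argument. Since $Y/X\leq (3/2)^B$, I can cover $[X^{e_2},Y^{e_2}]$ by $O(e_2)$ dyadic intervals $[2^iX^{e_2},2^{i+1}X^{e_2}]$ and apply Theorem~\ref{Lemma-Matomaki} with $\gamma=1-1/e_2$ on each. Because $(q+1)^{e_2}>q^{e_2}+q^{e_2-1}$ whenever $e_2\geq 2$, the intervals $[q^{e_2},q^{e_2}+q^{e_2-1}]$ for distinct primes $q$ are pairwise disjoint, so the number of primes $q\in[X,Y]$ for which $\#([q^{e_2},q^{e_2}+q^{e_2-1}]\cap\mathcal{P})<d_1 g(q^{e_2-1})$ is bounded by $O(X^{1-e_2/3})$. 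The hypothesis \eqref{Inequality-I1-Matomaki3} yields $\#([X,Y]\cap\mathcal{P})\geq d_3 g(Y-X)\geq 2d_3 X^{1/2}/\log X$, which dominates that bound for $X\geq X_0(d_3)$ sufficiently large; I then pick any two good primes as $q_{1,1}<q_{1,2}$.

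For the \emph{recursive step}, given $q_{k,j}$ satisfying the density condition, I will apply Lemma~\ref{Lemma-Matomaki2} with $X':=q_{k,j}^{e_{k+1}}$, $Y':=q_{k,j}^{e_{k+1}}+q_{k,j}^{e_{k+1}-1}$, $c:=e_{k+2}$, $B':=\max(e_{k+2},2+\delta)$ for small $\delta>0$, $\epsilon:=1/6$, and $d_2:=d_1$. The hypothesis $(X')^{1/2}\leq Y'-X'$ amounts to $e_{k+1}\geq 2$, while $Y'-X'\leq ((3/2)^{1/B'}-1)X'$ reduces to $q_{k,j}\geq 1/((3/2)^{1/B'}-1)$, which holds because $q_{k,j}\geq q_{1,j}^{e_2\cdots e_k}$ grows super-exponentially from the large initial $q_{1,j}\geq X\geq X_0(d_3)$. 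Lemma~\ref{Lemma-Matomaki2} then produces two primes in $[X',Y']$, either of which I take as $q_{k+1,j}$. A binomial estimate gives $(q_{k,j}+1)^{e_{k+1}}-1>q_{k,j}^{e_{k+1}}+q_{k,j}^{e_{k+1}-1}\geq q_{k+1,j}$, establishing \eqref{Inequality-Matomaki}, and Lemma~\ref{Lemma-Matomaki2}'s conclusion furnishes the density condition at step $k+1$. The main obstacle will be the initial step, where I must rework the argument of Lemma~\ref{Lemma-Matomaki2} across multiple dyadic ranges because $Y/X$ can exceed $3/2$; a secondary concern is confirming that the threshold $X_0(d_1,B',\epsilon)$ from Lemma~\ref{Lemma-Matomaki2} is exceeded at every step $k$, which follows from the super-exponential growth of $q_{k,j}$.
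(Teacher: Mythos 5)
Your overall architecture (two good starting primes, then a window-to-window recursion maintaining a prime-density condition) matches the paper's, and your dyadic-covering treatment of the initial interval is a sensible way to handle the long range $Y-X\leq ((3/2)^{B}-1)X$. The genuine gap is in the recursive step. You apply Lemma~\ref{Lemma-Matomaki2} with $c:=e_{k+2}$ and $B':=\max(e_{k+2},2+\delta)$ to the window $[X',Y']$, where $Y'-X'=q_{k,j}^{e_{k+1}-1}=X'/q_{k,j}$. The hypothesis $Y'-X'\leq ((3/2)^{1/B'}-1)X'$ is then equivalent to $q_{k,j}\geq ((3/2)^{1/B'}-1)^{-1}$, which for large $B'$ is roughly $B'/\log(3/2)\approx 2.5\,e_{k+2}$. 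Your justification that this ``holds because $q_{k,j}$ grows super-exponentially'' is a non sequitur: the sequence $(e_k)$ is arbitrary subject only to $e_k\geq 2$, so nothing ties $e_{k+2}$ to the size of $q_{k,j}$. For instance, take $e_2=\cdots=e_{k+1}=2$ and then choose $e_{k+2}$ larger than $3(Y+1)^{2^{k}}$; since $q_{k,j}+1\leq (q_{1,j}+1)^{e_2\cdots e_k}\leq (Y+1)^{2^{k-1}}$, the required inequality fails, and the recursion cannot proceed. The same objection undercuts your dismissal of the threshold $X_0(d_1,B',\epsilon)$, which as stated may depend on $B'=e_{k+2}$.

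The missing idea is the case distinction the paper makes at every step according to the size of the next exponent. When $e_{k+2}\geq 1/(1-\theta)$ one does not need Lemma~\ref{Lemma-Matomaki2} at all: for \emph{any} prime $q$ chosen in the current window, the interval $[q^{e_{k+2}},q^{e_{k+2}}+q^{\theta e_{k+2}}]$ has length $x^{\theta}$ with $x=q^{e_{k+2}}$, so Theorem~\ref{Lemma-BakerHarmanPintz} alone gives at least $d_0\,g(x^{\theta})$ primes in it, and $\theta e_{k+2}\leq e_{k+2}-1$ in this regime keeps the window below $(q+1)^{e_{k+2}}-1$. Only when $2\leq e_{k+2}<1/(1-\theta)$, i.e.\ when $c$ is bounded by the absolute constant $1/(1-\theta)$, is Lemma~\ref{Lemma-Matomaki2} invoked, with the fixed $B=1/(1-\theta)$, so that both the window condition and the largeness threshold are uniform in $k$ and in the sequence. (Alternatively, you could salvage your single-tool strategy by redoing your dyadic-covering argument inside the recursion: covering $[(X')^{c},(Y')^{c}]$ by about $1+e_{k+2}/q_{k,j}$ blocks, the exceptional count is $\ll (1+e_{k+2}/q_{k,j})(X')^{1-e_{k+2}/3}$, which is still negligible against the number of primes in the window; but the black-box application of Lemma~\ref{Lemma-Matomaki2} as you wrote it does not go through.)
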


\begin{proof}
Let $d=\min(d_0,d_1)$, where $d_0$ and $d_1$ are defined in Theorem~\ref{Lemma-BakerHarmanPintz} and Theorem~\ref{Lemma-Matomaki}, respectively. We define 
\begin{equation*}
\eta_{k+1}=
\begin{cases}
\theta e_{k+1}\quad &\text{if }e_{k+1}\geq 1/(1-\theta),  \\
e_{k+1}-1\quad &\text{if }2\leq e_{k+1} < 1/(1-\theta) 
\end{cases}
\end{equation*}
for all $k\in \mathbb{N}$. If $e_{2}\geq 1/(1-\theta)$, then we take prime numbers $q_{1,1}, q_{1,2} \in [X,Y]\cap \mathcal{P}$ with $q_{1,1}<q_{1,2}$. Note that such two prime numbers exist by \eqref{Inequality-I1-Matomaki3} and $X\geq X_0$.  Then for all $j\in \{1,2\}$, by Theorem~\ref{Lemma-BakerHarmanPintz} with $x:=q_{1,j}^{e_{2}}$, we have 
\[
\#([q_{1,j}^{e_{2}},\ q_{1,j}^{e_{2}} + q_{1,j}^{\eta_2}]\cap \mathcal{P}) \geq d g(q_{1,j}^{\eta_2} ).
\]
If $2\leq e_{2}< 1/(1-\theta)$, then since we are supposing \eqref{Inequality-I1-Matomaki3},  by applying Lemma~\ref{Lemma-Matomaki2} with $c:= e_{2}$, $B:=1/(1-\theta)$, $\epsilon:=1/6$, $X:=X$, and $Y:=Y$, there exist $q_{1,1}, q_{1,2}\in [X,Y]\cap \mathcal{P}$ such that for every $j\in\{1,2\}$ one has
\[
\#([q_{1,j}^{e_2}, q_{1,j}^{e_{2}}+q_{1,j}^{\eta_2}]\cap \mathcal{P})\geq d g(q_{1,j}^{\eta_2}).
\]
 Fix any $j\in \{1,2\}$ and let $q_1=q_{1,j}$. Let us construct a sequence of prime numbers $(q_k)_{k=1}^\infty$ such that for all $k\in \mathbb{N}$
\begin{equation}\label{Formula-for-qk}
\#([q_{k}^{e_{k+1}}, q_{k}^{e_{k+1}}+q_{k}^{\eta_{k+1}}]\cap \mathcal{P})\geq d g(q_{k}^{\eta_{k+1}}),\quad q_{k+1}\in [q_{k}^{e_{k+1}}, q_{k}^{e_{2}}+q_{k}^{\eta_{k+1}}]\cap \mathcal{P}.
\end{equation}

 Assume that for $k\in \mathbb{N}$ there exists $q_{k}\in \mathcal{P}$ with $q_{k}\geq X_0$ such that one has $\#([q_{k}^{e_{k+1}}, q_{k}^{e_{k+1}}+q_{k}^{\eta_{k+1}}]\cap \mathcal{P})\geq d g(q_{k}^{\eta_{k+1}}).$ Remark that $q_1$ satisfies this assumption. Let $X':=q_{k}^{e_{k+1}}$ and $Y':=q_{k}^{e_{k+1}}+q_{k}^{\eta_{k+1}}$. By the definition of $\eta_{k+1}$ and $X'=q_{k}^{e_{k+1}}$, 
\[
 Y'-X'=q_{k}^{\eta_{k+1}}\in \{X'^{\theta}, X'^{1-1/e_{k+1}} \} .
 \]
 Further, since $\theta=21/40$, $e_{k+1}\geq 2$, $q_{k}\geq X_0$, and $X_0$ is sufficiently large, we have 
\begin{gather*}
X'^{1/2}< X'^{\theta} \leq ((3/2)^{1/B}-1) X',\\
X'^{1/2} \leq  X'^{1-1/e_{k+1}} \leq q_{k}^{-1}  X'\leq {X_0}^{-1} X' \leq  ((3/2)^{1/B}-1) X'.
\end{gather*}
Thus,
$
X'^{1/2}\leq Y'-X'\leq ((3/2)^{1/B}-1) X'.
$
Therefore, by applying the argument when $k=1$ by substituting $X:=X'$ and $Y:=Y'$, there exists $q_{k+1}\in [q_{k}^{e_{k+1}}, q_{k}^{e_{k+1}}+q_{k}^{\eta_{k+1}}]\cap \mathcal{P}$ such that 
\[
\#([q_{k+1}^{e_{k+2}}, q_{k+1}^{e_{k+2}}+q_{k+1}^{\eta_{k+2}}]\cap \mathcal{P})\geq d g(q_{k+1}^{\eta_{k+2}}).
\]
By induction,  we construct a sequence of prime numbers $(q_k)_{k=1}^\infty(=(q_{k,j})_{k=1}^\infty)$ satisfying \eqref{Formula-for-qk}. Especially, for all $k\in \mathbb{N}$, we obtain $q_{k}^{e_{k+1}} \leq q_{k+1} \leq q_{k}^{e_{k+1}} +q_{k}^{\eta_{k+1}}.$ Take any $k\in \mathbb{N}$. Since $\eta_{k+1}\leq e_{k+1}-1$, we have 
\[
q_{k}^{e_{k+1}} +q_{k}^{\eta_{k+1}} \leq q_{k}^{e_{k+1}} +q_{k}^{e_{k+1}-1}< (q_{k}+1)^{e_{k+1}}-1. 
\]
Therefore, we conclude that \eqref{Inequality-Matomaki} holds.
\end{proof}

\begin{theorem}\label{Theorem-non-empty}
Let $(c_k)_{k\in \mathbb{N}}$ be a sequence of real numbers satisfying that 
\begin{enumerate}
\item $c_1>0$; 
\item $c_{k+1}\geq 2$ for all $k\in \mathbb{N}$. 
\end{enumerate}
Then $\mathcal{W}(c_k)$ is non-empty. 
\end{theorem}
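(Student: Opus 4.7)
The plan is to construct an element $A\in\mathcal{W}(c_k)$ directly by invoking Lemma~\ref{Lemma-Matomaki3} and then running the classical Mills-type nested-intervals argument. Concretely, I will produce a sequence of primes $(q_k)_{k=1}^\infty$ satisfying $q_k^{c_{k+1}}\leq q_{k+1}<(q_k+1)^{c_{k+1}}-1$ for every $k\in\mathbb{N}$, and then take $A$ to be the common point of the nested intervals $[q_k^{1/C_k},(q_k+1)^{1/C_k})$.

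First I would verify the hypotheses of Lemma~\ref{Lemma-Matomaki3} with $e_{k+1}:=c_{k+1}$, which is admissible because $c_{k+1}\geq 2$ by assumption. Fix $X$ large and set $Y:=X+X^\theta$ with $\theta=21/40$. Theorem~\ref{Lemma-BakerHarmanPintz} yields $\#([X,Y]\cap\mathcal{P})\geq d_0\, g(X^\theta)$, which is exactly \eqref{Inequality-I1-Matomaki3} with $d_3:=d_0$. Since $\theta>1/2$, for all sufficiently large $X$ the geometric size constraint $X^{1/2}\leq Y-X\leq((3/2)^B-1)X$ with $B=1/(1-\theta)$ is also satisfied. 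Applying Lemma~\ref{Lemma-Matomaki3} then supplies two prime sequences; it suffices to keep one of them, say $(q_k):=(q_{k,1})_{k=1}^\infty$, which satisfies the desired Mills-type chain by \eqref{Inequality-Matomaki}.

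Next I would set $u_k:=q_k^{1/C_k}$ and $v_k:=(q_k+1)^{1/C_k}$, and show that $(u_k)$ is non-decreasing while $(v_k)$ is strictly decreasing. Using $C_{k+1}=C_k c_{k+1}$, raising to the $C_{k+1}$-th power reduces $u_{k+1}\geq u_k$ to $q_{k+1}\geq q_k^{c_{k+1}}$ and $v_{k+1}<v_k$ to $q_{k+1}+1<(q_k+1)^{c_{k+1}}$; both are immediate from \eqref{Inequality-Matomaki}. Since $u_k<v_k$ trivially, the two monotone bounded sequences converge to a common value $A:=\lim_k u_k$ satisfying $u_k\leq A\leq \lim_k v_k<v_k$ for every $k$. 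Rearranging $u_k\leq A<v_k$ gives $q_k\leq A^{C_k}<q_k+1$, hence $\lfloor A^{C_k}\rfloor=q_k\in\mathcal{P}$ for all $k$; moreover $A\geq u_1=q_1^{1/c_1}>1$ since $q_1\geq 2$ and $c_1>0$. Therefore $A\in\mathcal{W}(c_k)$, which proves the theorem.

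No part of this argument is delicate: the analytic heavy lifting sits inside Lemma~\ref{Lemma-Matomaki3} and Theorem~\ref{Lemma-BakerHarmanPintz}, and what remains is the standard compactness-style telescoping. The only point requiring attention is choosing $X$ large in terms of the absolute constants $d_0$, $X_0(d_0)$, and $\theta$ so that both the prime-count lower bound and the geometric size conditions needed by Lemma~\ref{Lemma-Matomaki3} hold simultaneously; this is a one-line verification and not a real obstacle.
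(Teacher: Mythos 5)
Your proposal is correct and follows essentially the same route as the paper: feed an initial interval with enough primes into Lemma~\ref{Lemma-Matomaki3}, extract a chain of primes with $q_k^{c_{k+1}}\leq q_{k+1}<(q_k+1)^{c_{k+1}}-1$, and take the limit of the nested intervals to get $A\in\mathcal{W}(c_k)$. The only (harmless) difference is the seeding step: you start from $[X,X+X^\theta]$ via Theorem~\ref{Lemma-BakerHarmanPintz}, while the paper starts from $[M,(3/2)^{B}M]$ via the prime number theorem; both verify the hypothesis \eqref{Inequality-I1-Matomaki3} equally well.
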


\begin{proof}
Let $M\in \mathbb{N}$ be a sufficiently large parameter. Let $E=(3/2)^B$, where $B=1/(1-\theta)$. By the prime number theorem, we have
$\#([M, EM]\cap \mathcal{P}) \gg g(M)$. Here it follows that $M^{1/2} \leq EM-M \leq ((3/2)^B-1)M$. Therefore, by Lemma~\ref{Lemma-Matomaki3} with $(e_k)_{k=2}^\infty:=(c_k)_{k=2}^\infty$, $X:=M$, and $Y:=EM$, if $M$ is sufficiently large, then there exists a sequence of prime numbers $(p_k)_{k=1}^\infty$ such that for all $k\in \mathbb{N}$, we have $p_k^{c_{k+1}} \leq p_{k+1} < (p_k+1)^{c_{k+1}} -1$. This implies that 
\begin{equation}\label{Inequality-p1p2p3}
p_1^{1/C_1} \leq p_2^{1/C_2} \leq \cdots < (p_2+1)^{1/C_2} <(p_1+1)^{1/C_1}.
\end{equation}
Therefore, let $A= \lim_{k\rightarrow \infty} p_k^{1/C_k}$ which exists. Hence by \eqref{Inequality-p1p2p3}, for all $k\in \mathbb{N}$, we have $\lfloor A^{C_k}\rfloor=p_k\in \mathcal{P}$. This means that $A\in \mathcal{W}(c_k)$.  
\end{proof}

\begin{remark} 
We assume $c_1>0$ in Theorem~\ref{Theorem-non-empty}, but Matom\"aki assumed $c_1\geq 2$ in \cite[Theorem~3]{Matomaki}. Thus, there is a small gap between these assumptions. However, this gap is not essential. Indeed, let $(c_k)_{k\in \mathbb{N}}$ be a real sequence satisfying the conditions in Theorem~\ref{Theorem-non-empty}. Let $c'_1=2$ and $c'_k=c_k$ for all integers $k\geq 2$. Define 
\[
f:\mathcal{W}(c_k) \ni A \mapsto A^{c_1/2} \in \mathcal{W}(c_k').
\]
Then $f$ is Lipschitz continuous on every compact subset of $\mathcal{W}(c_k)$, and bijective. Since $\mathcal{W}(c_k')$ is uncountable, nowhere dense, and has Lebesgue measure $0$ by \cite[Theorem~3]{Matomaki}, $\mathcal{W}(c_k)$ also has the same properties by using $f$.  Hence, \cite[Theorem~3]{Matomaki} implies Theorem~\ref{Theorem-non-empty}.   
\end{remark}
\section{Topological properties}\label{Section-Topology}

In this section, we discuss the topological structure of $\mathcal{W}(c_k)$ and give a proof of Theorem~\ref{Theorem-topology}.

\begin{lemma}\label{Lemma-Pre1}
Let $(c_k)_{k\in \mathbb{N}}$ be a sequence of real numbers satisfying that 
\begin{itemize}
\item $c_1>0$; 
\item $c_{k+1}>1$ for all $k\in \mathbb{N}$. 
\end{itemize}
Suppose that $\mathcal{W}(c_k)$ is non-empty. Let $(A_j)_{j=1}^\infty$ be any sequence composed of $A_j\in \mathcal{W}(c_k)$ for all $j\in \mathbb{N}$. Then we have the following:
\begin{enumerate}
\item \label{Lemma-Pre1-1}if $A_1\geq A_2\geq \cdots>1$, then $\displaystyle{\lim_{j\rightarrow \infty}} A_j\in \mathcal{W}(c_k)$; \label{1-Lemma-Pre1}
\item if $A_1\leq A_2\leq \cdots<\infty$, and for all $k\in \mathbb{N}$ there exists an integer $\nu>k$ such that $c_{k+1}c_{k+2}\cdots c_\nu\in \mathbb{N}$, then $\displaystyle{\lim_{j\rightarrow \infty} A_j}\in \mathcal{W}(c_k)$.\label{2-Lemma-Pre1} 
\end{enumerate}
\end{lemma}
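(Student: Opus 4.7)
The plan is to set $A:=\lim_{j\to\infty}A_j$ and, for each $k\in\mathbb{N}$, to track the monotone sequence $A_j^{C_k}\to A^{C_k}$ against the primes $\lfloor A_j^{C_k}\rfloor$, verifying $\lfloor A^{C_k}\rfloor\in\mathcal{P}$ by cases on whether $A^{C_k}$ is an integer. In both parts the bound $A>1$ is immediate: $\lfloor A_j^{C_1}\rfloor\ge 2$ forces $A_j\ge 2^{1/C_1}$, hence $A\ge 2^{1/C_1}>1$. The same lower bound yields $A^{C_k}\ge 2$ for every $k$, so the ``integer case'' below always means $A^{C_k}=n\in\mathbb{Z}$ with $n\ge 2$.

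For part (1), $A_j^{C_k}$ is nonincreasing with $A_j^{C_k}\ge A^{C_k}$ for all $j$. Convergence from above places $A_j^{C_k}$ eventually in $[A^{C_k},\lceil A^{C_k}\rceil)$ when $A^{C_k}\notin\mathbb{Z}$, and in $[n,n+1)$ when $A^{C_k}=n\in\mathbb{Z}$; in either case $\lfloor A_j^{C_k}\rfloor=\lfloor A^{C_k}\rfloor$ eventually, so primality of the left side transfers to the right.

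For part (2), $A_j^{C_k}$ is nondecreasing. If $A_{j_0}=A$ for some $j_0$ the conclusion is immediate from $A_{j_0}\in\mathcal{W}(c_k)$, so I may assume $A_j<A$ strictly for all $j$; strict monotonicity of $x\mapsto x^{C_k}$ then gives $A_j^{C_k}<A^{C_k}$ strictly. The subcase $A^{C_k}\notin\mathbb{Z}$ reduces to part (1). The main obstacle is the subcase $A^{C_k}=n\in\mathbb{Z}$ with $n\ge 2$: strict convergence from below now forces $\lfloor A_j^{C_k}\rfloor=n-1$ eventually, telling us nothing about the primality of $n=\lfloor A^{C_k}\rfloor$. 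Here I would invoke the integrality hypothesis: pick $\nu>k$ with $m:=c_{k+1}\cdots c_\nu\in\mathbb{N}$, observing $m\ge 2$ since each $c_j>1$. Then $A^{C_\nu}=n^m\in\mathbb{Z}$, and the same strict-from-below argument applied at exponent $C_\nu$ shows $\lfloor A_j^{C_\nu}\rfloor=n^m-1$ is prime for all sufficiently large $j$. The factorization
\[
n^m-1=(n-1)\bigl(1+n+n^2+\cdots+n^{m-1}\bigr),
\]
combined with the second factor being $\ge m\ge 2$, forces $n-1=1$, i.e.\ $n=2$. But then $A^{C_k}=2$ and strict convergence gives $\lfloor A_j^{C_k}\rfloor=1$ eventually, contradicting primality. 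Hence $A^{C_k}\notin\mathbb{Z}$, closing the proof.
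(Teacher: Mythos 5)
Your proof is correct and takes essentially the same route as the paper's: monotone transfer of floors when $A^{C_k}\notin\mathbb{Z}$, and in the integer case passing to the exponent $C_\nu$ with integer ratio $m\geq 2$ and exploiting the factorization $n^m-1=(n-1)(1+n+\cdots+n^{m-1})$ against the primality of $\lfloor A_j^{C_\nu}\rfloor$. The only differences are cosmetic: you split off the eventually-constant case and convert the conclusion $n=2$ into a contradiction (also making the implicit point $A>1$ explicit), whereas the paper keeps the case $A^{C_k}=2$ and simply observes that then $\lfloor A^{C_k}\rfloor=2$ is itself prime.
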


\begin{proof}
We firstly show \eqref{1-Lemma-Pre1}. Since $A_j$ is monotonically decreasing and has a lower bound, the limit $\lim_{j\rightarrow \infty}A_j$ exists. Let $A$ be this limit. Fix any $k\in \mathbb{N}$. Let $\epsilon= (1-\{A^{C_k}\})/2>0$, where $\{x\}$ is the fractional part of $x\in \mathbb R$. We have
\[
0\leq \{A^{C_k}\} + \epsilon = (1+\{A^{C_k}\})/2<1.
\]
Therefore, $\lfloor A^{C_k} \rfloor = \lfloor A^{C_k} +\epsilon\rfloor$. There exists a large $j\in \mathbb{N}$ such that
$0\leq A_j^{C_k} -A^{C_k} \leq \epsilon $, which implies $\lfloor A^{C_k}\rfloor \leq \lfloor A_j^{C_k}\rfloor\leq \lfloor A^{C_k}+\epsilon\rfloor = \lfloor A^{C_k} \rfloor$. Therefore, $\lfloor A^{C_k} \rfloor=\lfloor A_j^{C_k}\rfloor\in \mathcal{P}$.

Let us next show \eqref{2-Lemma-Pre1}. Similarly to the case \eqref{1-Lemma-Pre1}, the limit $\lim_{j\rightarrow \infty}A_j$ exists, and we let $A$ be this limit. Fix any $k\in \mathbb{N}$. We discuss the case when $A^{C_k}\in \mathbb{N}$. By the assumption, there exists an integer $\nu=\nu(k)>k$ such that $c_{k+1}\cdots c_\nu\in \mathbb{N}$. Let $D=c_{k+1}\cdots c_\nu$. Since $c_{k+1},\ldots ,c_\nu>1$ and $D\in \mathbb{N}$, we have $D\geq 2$. Here there exists a large $j\in \mathbb{N}$
such that $0\leq A^{C_kD}-A_j^{C_kD}<1/2$. Therefore $\lfloor A_j^{C_kD} \rfloor \in \{ A^{C_kD}-1, A^{C_kD}\}$ since $A^{C_k} $ and $D$ are positive integers. The case $\lfloor A_j^{C_kD} \rfloor=A^{C_kD}$ does not arise since $A^{C_k}\in \mathbb{N}$, $D\geq 2$ is an integer, and $\lfloor A_j^{C_kD} \rfloor$ is a prime number. In the case when $\lfloor A_j^{C_kD} \rfloor =  A^{C_kD}-1$,  one has $A^{C_k}=2$ since $\lfloor A_j^{C_kD} \rfloor$ is a prime number and $D\geq 2$ is an integer. Therefore, $\lfloor A^{C_k}\rfloor \in \mathcal{P}$. Hence, we may assume that $A^{C_k}$ is not an integer. Then $\lfloor A^{C_k} \rfloor < A^{C_k}$. Thus, for sufficiently large $j$, we have $\lfloor A^{C_k} \rfloor < A_j^{C_k}  \leq A^{C_k}<\lfloor A^{C_k} \rfloor+1 $. Therefore, $\lfloor A^{C_k}\rfloor =\lfloor A_j^{C_k}\rfloor\in \mathcal{P}$. We conclude that $A\in \mathcal{W}(c_k)$.    

\end{proof}

\begin{remark}\label{Remark-min}
By \eqref{Lemma-Pre1-1} in Lemma~\ref{Lemma-Pre1}, if $(c_k)_{k\in \mathbb{N}}$ satisfies the conditions in Lemma~\ref{Lemma-Pre1} and $\mathcal{W}(c_k)$ is non-empty, then the minimum of $\mathcal{W}(c_k)$ exists.    
\end{remark}

\begin{proposition}\label{Proposition-closed}
Let $(c_k)_{k\in \mathbb{N}}$ be a sequence of real numbers satisfying that
\begin{enumerate}
\item $c_1>0$; 
\item $c_{k+1}>1$ for all $k\in \mathbb{N}$;
\item for all $k\in \mathbb{N}$ there exists $\nu>k$ such that $c_{k+1}c_{k+2}\cdots c_\nu\in \mathbb{N}$.
\end{enumerate}
Then $\mathcal{W}(c_k)$ is closed. 
\end{proposition}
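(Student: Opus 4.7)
The plan is to show that for any sequence $(A_j)_{j=1}^\infty$ in $\mathcal{W}(c_k)$ converging to some $A\in\mathbb{R}$, the limit satisfies $A\in\mathcal{W}(c_k)$, by extracting a monotone subsequence and applying the two parts of Lemma~\ref{Lemma-Pre1}. First I would record a uniform lower bound: each $A_j\in\mathcal{W}(c_k)$ satisfies $A_j^{C_1}\geq\lfloor A_j^{C_1}\rfloor\geq 2$, hence $A_j\geq 2^{1/C_1}$. Passing to the limit gives $A\geq 2^{1/C_1}>1$, so $A$ is a legitimate candidate for membership in $\mathcal{W}(c_k)\subseteq(1,\infty)$.

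Next, invoking the elementary fact that every real sequence contains a monotone subsequence, I would replace $(A_j)$ by such a subsequence (which still converges to $A$) and split into two cases. If the subsequence is non-increasing, then $A_1\geq A_2\geq\cdots > 1$, and Lemma~\ref{Lemma-Pre1}(\ref{1-Lemma-Pre1}) applies directly to yield $A\in\mathcal{W}(c_k)$. If instead the subsequence is non-decreasing, then the terms are bounded above by $A<\infty$, and condition (3) of the present proposition is precisely the integrality hypothesis required by Lemma~\ref{Lemma-Pre1}(\ref{2-Lemma-Pre1}); applying that lemma again gives $A\in\mathcal{W}(c_k)$.

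I do not expect any substantive obstacle, since the proposition amounts to a packaging of Lemma~\ref{Lemma-Pre1} via a Bolzano--Weierstrass style extraction. The only point requiring a little care is the uniform lower bound $A_j\geq 2^{1/C_1}$: without it one would have to worry about the degenerate possibility $A=1$, which is excluded from $\mathcal{W}(c_k)$ by definition.
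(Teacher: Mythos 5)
Your proposal is correct and follows essentially the same route as the paper: extract a monotone subsequence of the convergent sequence in $\mathcal{W}(c_k)$ and apply the two cases of Lemma~\ref{Lemma-Pre1}. The extra observation that $A_j\geq 2^{1/C_1}$, hence the limit exceeds $1$, is a reasonable explicit check of the lemma's hypothesis that the paper leaves implicit.
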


\begin{proof}
We may assume that $\mathcal{W}(c_k)$ is non-empty since the empty set is closed. Take any convergent sequence $(A_j)_{j=1}^\infty$ composed of $A_j\in \mathcal{W}(c_k)$ for all $j\in \mathbb{N}$. Let $A=\lim_{j\rightarrow \infty} A_j$. Then there exists a subsequence $j_1<j_2<\cdots \rightarrow \infty$ such that $A_{j_1} \leq  A_{j_2} \leq \cdots \rightarrow A$, or  $A_{j_1} \geq A_{j_2} \geq \cdots \rightarrow A$. Therefore, by Lemma~\ref{Lemma-Pre1}, we conclude that $A\in \mathcal{W}(c_k)$. 
\end{proof}

\begin{proposition}\label{Proposition-totally-disconnected}
Let $(c_k)_{k\in \mathbb{N}}$ be a sequence of real numbers satisfying that 
\begin{enumerate}
\item $c_k>0 $ for all $k\in \mathbb{N}$; 
\item \label{Condition-TD} $\limsup_{k\rightarrow \infty} C_k= \infty$ as $k\rightarrow \infty$.
\end{enumerate} 
Then $\mathcal{W}(c_k)$ is totally disconnected.
\end{proposition}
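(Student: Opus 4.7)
The plan is to argue by contradiction. Suppose $\mathcal{W}(c_k)$ contains a connected subset with at least two points; since connected subsets of $\mathbb{R}$ are intervals and $\mathcal{W}(c_k)\subseteq(1,\infty)$, we may extract a closed subinterval $[A,A']\subseteq \mathcal{W}(c_k)$ with $1<A<A'$. The goal is then to contradict the primality condition by showing that, for some $k$, the map $B\mapsto \lfloor B^{C_k}\rfloor$ must hit too many consecutive integers.

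Fix $k\in\mathbb{N}$. Since $c_k>0$, the function $B\mapsto B^{C_k}$ is continuous and strictly increasing on $[A,A']$, so it sends $[A,A']$ onto $[A^{C_k},A'^{C_k}]$. By the intermediate value theorem, for every integer $n$ with $\lfloor A^{C_k}\rfloor\leq n\leq \lfloor A'^{C_k}\rfloor$ there exists $B\in[A,A']$ with $\lfloor B^{C_k}\rfloor=n$. Because $B\in \mathcal{W}(c_k)$ for each such $B$, every integer in
\[
\bigl\{\lfloor A^{C_k}\rfloor,\ \lfloor A^{C_k}\rfloor+1,\ \ldots,\ \lfloor A'^{C_k}\rfloor\bigr\}
\]
must be prime.

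Now I would invoke condition \eqref{Condition-TD}: along a subsequence, $C_k\to\infty$. Since $A>1$ and $A'/A>1$, both $A^{C_k}$ and $A'^{C_k}-A^{C_k}=A^{C_k}\bigl((A'/A)^{C_k}-1\bigr)$ tend to infinity along this subsequence. I would choose $k$ with $A^{C_k}\geq 3$ and $A'^{C_k}-A^{C_k}\geq 3$, so that the image contains at least three consecutive integers, each $\geq 3$. Among any three consecutive integers $\geq 3$, at least one is even and $\geq 4$, hence composite. This contradicts the primality obtained in the previous paragraph, and so no such interval $[A,A']$ exists, proving that $\mathcal{W}(c_k)$ is totally disconnected.

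There is essentially no hard step here: the proof rests on the intermediate value theorem together with the elementary fact that three consecutive integers $\geq 3$ cannot all be prime. The only mild care required is to use $\limsup C_k=\infty$ (not $\lim$) when selecting $k$, which is handled by passing to the appropriate subsequence. Condition (1) is used to guarantee strict monotonicity of $B\mapsto B^{C_k}$, while condition (2) provides the unbounded growth needed to force three consecutive prime values.
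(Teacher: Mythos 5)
Your proof is correct. It takes the same basic route as the paper --- contradiction from the assumption that $\mathcal{W}(c_k)$ contains a nondegenerate interval, using that $x\mapsto x^{C_k}$ stretches that interval unboundedly along a subsequence where $C_k\to\infty$ --- but the implementation differs. The paper argues locally: for a point $A$ in the hypothetical interval it picks a composite $m_\ell\in\{p_\ell+1,p_\ell+2\}$ with $p_\ell=\lfloor A^{C_\ell}\rfloor$ and uses the mean value theorem to show $B_\ell=m_\ell^{1/C_\ell}$ satisfies $0<B_\ell-A\le 2/C_\ell$, so a point with composite floor invades any neighbourhood of $A$ once $C_\ell$ is large. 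You argue globally: by the intermediate value theorem every integer in $[\lfloor A^{C_k}\rfloor,\lfloor A'^{C_k}\rfloor]$ would have to be prime, and since $A'^{C_k}-A^{C_k}\to\infty$ along the subsequence, that range eventually contains three consecutive integers $\ge 3$, one of which is an even number $\ge 4$. Your version avoids the mean value theorem estimate and the explicit construction of $B_\ell$, at the cost of nothing; the paper's local formulation has the side benefit of exhibiting, near every element of $\mathcal{W}(c_k)$, an explicit nearby point outside $\mathcal{W}(c_k)$ with a quantitative distance bound $2/C_\ell$, which matches the style of estimates reused elsewhere in the paper (e.g.\ in the proof that there are no isolated points). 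Both arguments use hypothesis (1) only through positivity of $C_k$ (monotonicity of the power map) and hypothesis (2) through the chosen subsequence, so the logical content is the same.
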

\begin{proof}
The empty set is totally disconnected. Thus, we may assume that $\mathcal{W}(c_k)$ is non-empty. Fix any $U\in \Conn (\mathcal{W}(c_k))$.  If $U$ contains an open interval $I$, then we take any $A\in U$. Let $p_k=\lfloor A^{C_k} \rfloor$ for all $k\in \mathbb{N}$. Since $A\in \mathcal{W}(c_k)$, all the integers $p_k$ are prime numbers. By the definition of $p_k$, we have $p_k\leq A^{C_k}<p_k+1$ for all $k\in \mathbb{N}$. Take a large $\ell\in \mathbb{N}$ such that $C_\ell>1$. Such $\ell$ exists by \eqref{Condition-TD}. Then there exists a composite number $m_\ell\in \mathbb{N}$ such that $m_\ell\in\{p_\ell+1, p_\ell+2\}$. Define $B_\ell=m_\ell^{1/C_\ell}$. Then we obtain $B_\ell\notin \mathcal{W}(c_k)$ since $\lfloor B_\ell^{C_\ell}\rfloor =m_\ell\notin \mathcal{P}$. Therefore, by the mean value theorem, 
\begin{equation}\label{Inequality-disconnected}
|B_\ell-A|=B_\ell-A\leq (p_\ell+2)^{1/C_\ell}-p_\ell^{1/C_\ell} \leq \frac{2}{C_\ell} p_\ell^{1/C_\ell-1} \leq \frac{2}{C_\ell}.  
\end{equation}
By \eqref{Condition-TD}, \eqref{Inequality-disconnected} contradicts that $A\in I \subseteq \mathcal{W}(c_k)$ and $I$ is open. 
\end{proof}

\begin{lemma}\label{Lemma-Mills} Let $(c_k)_{k\in \mathbb{N}}$ be a sequence of real numbers satisfying that 
\begin{enumerate}
\item $c_1>0$; 
\item $c_{k+1}\in \mathbb{Z}$ and $c_{k+1}\geq 2$ for all $k\in \mathbb{N}$. 
\end{enumerate}
Let $A\in \mathcal{W}(c_k)$, and let $p_k=\lfloor A^{C_k} \rfloor$ for all $k\in \mathbb{N}$. Then for all $k\in \mathbb{N}$, we have $p_{k}^{c_{k+1}}\leq  p_{k+1}< (p_{k}+1)^{c_{k+1}}-1.$
\end{lemma}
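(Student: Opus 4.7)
The plan is a short, direct computation starting from the definition $p_k = \lfloor A^{C_k}\rfloor$, which gives the two-sided bound $p_k \le A^{C_k} < p_k+1$. Since $c_{k+1}$ is a positive integer and everything in sight is positive, raising to the power $c_{k+1}$ preserves both inequalities, yielding
\[
p_k^{c_{k+1}} \;\le\; A^{C_k c_{k+1}} \;=\; A^{C_{k+1}} \;<\; (p_k+1)^{c_{k+1}}.
\]
For the lower bound on $p_{k+1}$, I use that $p_k^{c_{k+1}}$ is an integer, so it is bounded above by $\lfloor A^{C_{k+1}}\rfloor = p_{k+1}$, which is the desired $p_k^{c_{k+1}}\le p_{k+1}$. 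For the upper bound, combine $p_{k+1}\le A^{C_{k+1}} < (p_k+1)^{c_{k+1}}$ and the integrality of $p_{k+1}$ to get $p_{k+1}\le (p_k+1)^{c_{k+1}}-1$.

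The only nontrivial point is upgrading the last inequality from $\le$ to the strict $<$ required by the lemma statement. The key observation is that $(p_k+1)^{c_{k+1}}-1$ is always composite under the hypotheses: the standard factorization gives
\[
(p_k+1)^{c_{k+1}}-1 \;=\; p_k \cdot \sum_{i=0}^{c_{k+1}-1}(p_k+1)^i,
\]
and both factors are $\ge 2$ (the first because $p_k$ is a prime, and the second because $c_{k+1}\ge 2$ forces the sum to be at least $1+(p_k+1)\ge 4$). Since $p_{k+1}$ is prime by the definition of $\mathcal{W}(c_k)$, it cannot equal this composite number, so the inequality $p_{k+1}\le (p_k+1)^{c_{k+1}}-1$ must be strict.

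I do not anticipate any obstacle beyond spotting the factorization in the previous paragraph; the rest is bookkeeping with floor functions and integer exponents. Note that the hypothesis $c_{k+1}\ge 2$ (integer) is used in exactly two places: to ensure that raising to the $c_{k+1}$-th power produces the integer $p_k^{c_{k+1}}$ which can be compared with $p_{k+1}$, and to make the factor $\sum_{i=0}^{c_{k+1}-1}(p_k+1)^i$ strictly greater than $1$, which is what produces the compositeness needed for strictness.
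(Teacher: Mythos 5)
Your proposal is correct and follows essentially the same route as the paper: raise $p_k \le A^{C_k} < p_k+1$ to the integer power $c_{k+1}$, use integrality of $p_k^{c_{k+1}}$ and $(p_k+1)^{c_{k+1}}$ to trap $p_{k+1}=\lfloor A^{C_{k+1}}\rfloor$, and then upgrade to strict inequality via the factorization $(p_k+1)^{c_{k+1}}-1=p_k\bigl(1+(p_k+1)+\cdots+(p_k+1)^{c_{k+1}-1}\bigr)$, which is composite while $p_{k+1}$ is prime. This is exactly the paper's argument, with the compositeness of both factors made slightly more explicit.
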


\begin{proof}
Take any $k\in \mathbb{N}$. Since $p_k=\lfloor A^{C_k}\rfloor$, it follows that $p_k \leq A^{C_k} < p_k+1$. This yields that 
$
p_k^{c_{k+1}} \leq  A^{C_{k+1}} < (p_k+1)^{c_{k+1}} .
$ 
Therefore, $p_{k+1} = \lfloor A^{C_{k+1}} \rfloor \geq p_k^{c_{k+1}}$ since $c_{k+1}$ and $p_k$ are positive integers. Further, by $(p_k+1)^{c_{k+1}}\in \mathbb{N}$ and the inequality $A^{C_{k+1}} < (p_k+1)^{c_{k+1}}$, we have 
$p_{k+1}\leq \lfloor A^{C_{k+1}} \rfloor \leq  (p_{k}+1)^{c_{k+1}}-1.$
Here we observe that
\[
(p_{k}+1)^{c_{k+1}}-1 = p_k (1+ (p_k+1)+\cdots+(p_k+1)^{c_{k+1}-1})
\]
is a composite number. Therefore, $p_{k+1}<(p_{k}+1)^{c_{k+1}}-1$.  
\end{proof}

\begin{proposition}\label{Proposition-no-isolated}
Let $(c_k)_{k\in \mathbb{N}}$ be a sequence of real numbers satisfying that 
\begin{enumerate}
\item $c_1>0$; 
\item $c_{k+1}\in \mathbb{Z}$ and $c_{k+1}\geq 2$ for all $k\in \mathbb{N}$;
\item $c_{k+1}\geq 3$ for infinitely many $k\in \mathbb{N}$.
\end{enumerate} 
Then $\mathcal{W}(c_k)$ has no isolated points.  
\end{proposition}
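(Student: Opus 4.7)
The plan is to show that every $A\in\mathcal{W}(c_k)$ is accumulated by other points of $\mathcal{W}(c_k)$, by exploiting any sufficiently large index $N$ with $c_{N+1}\ge 3$ as a branching point. Write $p_k=\lfloor A^{C_k}\rfloor$, so that by Lemma~\ref{Lemma-Mills} the primes $p_k$ satisfy $p_k^{c_{k+1}}\le p_{k+1}<(p_k+1)^{c_{k+1}}-1$. Using hypothesis~(3) together with the fact that $p_N\to\infty$ along any increasing sequence of indices, I fix a large $N$ with $c_{N+1}\ge 3$ and set $X=p_N^{c_{N+1}}$, $Y=(p_N+1)^{c_{N+1}}-1$. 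The aim is to apply Lemma~\ref{Lemma-Matomaki3} to the interval $[X,Y]$ with the shifted exponent sequence $(e_k)_{k=2}^\infty:=(c_{N+k})_{k=2}^\infty$, thereby producing two distinct prime sequences branching off from $p_N$.

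First I would verify the hypotheses of Lemma~\ref{Lemma-Matomaki3} for $(X,Y)$. A direct binomial expansion of $(p_N+1)^{c_{N+1}}$ shows that $Y-X$ is of order $c_{N+1}p_N^{c_{N+1}-1}$; since $c_{N+1}\ge 3$ one has $c_{N+1}-1\ge c_{N+1}/2$, so $Y-X\ge X^{1/2}$ for large $p_N$, while the ratio $(Y-X)/X=(1+1/p_N)^{c_{N+1}}-1-o(1)$ tends to $0$, giving $Y-X\le ((3/2)^{1/B}-1)X$. For the prime-count lower bound, $c_{N+1}\ge 3\ge 1/(1-\theta)$ forces $Y-X\ge X^\theta$, and then Theorem~\ref{Lemma-BakerHarmanPintz} applied to a tiling of $[X,Y]$ by intervals of length $X^\theta$ yields $\#([X,Y]\cap\mathcal{P})\gg g(Y-X)$.

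Once the hypotheses are checked, Lemma~\ref{Lemma-Matomaki3} delivers two prime sequences $(q_{k,j})_{k=1}^\infty$, $j\in\{1,2\}$, with $X\le q_{1,1}<q_{1,2}\le Y$ and $q_{k,j}^{c_{N+k+1}}\le q_{k+1,j}<(q_{k,j}+1)^{c_{N+k+1}}-1$. For each $j$ I splice with the prefix of $A$'s sequence by setting $\tilde q_{k,j}=p_k$ for $k\le N$ and $\tilde q_{N+k,j}=q_{k,j}$ for $k\ge 1$; the Mills-type inequality at the splice point $k=N$ is automatic because $q_{1,j}\in[X,Y]$. Following the proof of Theorem~\ref{Theorem-non-empty}, $\tilde q_{k,j}^{1/C_k}$ converges to some $A_j\in\mathcal{W}(c_k)$ with $\lfloor A_j^{C_k}\rfloor=\tilde q_{k,j}$. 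Since $q_{1,1}\ne q_{1,2}$ implies $A_1\ne A_2$, at least one of $A_1,A_2$, call it $A'$, differs from $A$; and both $A,A'$ lie in $[p_N^{1/C_N},(p_N+1)^{1/C_N}]$, an interval of length at most $C_N^{-1}p_N^{1/C_N-1}$ by the mean value theorem. This length tends to $0$ as $N\to\infty$ along the (infinite) set of admissible indices, so $A$ is not isolated.

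The main obstacle I anticipate is the clean verification of the prime-count hypothesis of Lemma~\ref{Lemma-Matomaki3}: Theorem~\ref{Lemma-BakerHarmanPintz} delivers primes only on the short scale $X^\theta$, whereas Lemma~\ref{Lemma-Matomaki3} demands a bound of the form $d_3\,g(Y-X)$ on the somewhat longer interval $[X,Y]$. The tiling argument outlined above is what I expect to use to bridge this gap.
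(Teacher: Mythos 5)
Your overall strategy --- splice a new prime branch onto $(p_k)$ at a large index $N$ with $c_{N+1}\ge 3$ via Lemma~\ref{Lemma-Matomaki3}, use the two distinct starting primes to obtain some $A'\in\mathcal{W}(c_k)$ with $A'\ne A$ but $\lfloor A'^{C_N}\rfloor=p_N$, and let $N\to\infty$ --- is exactly the paper's. The genuine gap is your choice of window $Y=(p_N+1)^{c_{N+1}}-1$. Lemma~\ref{Lemma-Matomaki3} requires $Y-X\le ((3/2)^{B}-1)X$, and your verification rests on the claims that $Y-X$ is of order $c_{N+1}p_N^{c_{N+1}-1}$ and that $(Y-X)/X=(1+1/p_N)^{c_{N+1}}-1$ tends to $0$; both implicitly assume $c_{N+1}=o(p_N)$, which nothing in the hypotheses guarantees. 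The sequence $(c_k)$ may grow arbitrarily fast (this is precisely the regime relevant to Theorem~\ref{Theorem-main1}): for instance one can have $c_{N+1}>3^{C_N}>p_N$ at every admissible index $N$, in which case $(1+1/p_N)^{c_{N+1}}\ge e^{c_{N+1}/(2p_N)}$ is enormous, the length hypothesis of Lemma~\ref{Lemma-Matomaki3} fails for every $N$ you could pick, and the tiling bound you plan to use also loses its force since $\log X\asymp c_{N+1}\log p_N$ need not be comparable to $\log(Y-X)$ in the way required.

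The repair is to shrink the window rather than tile it, which is what the paper does: take $X=p_N^{c_{N+1}}$ and $Y=X+X^{\theta}=p_N^{c_{N+1}}+p_N^{\theta c_{N+1}}$. Since $c_{N+1}\ge 3>1/(1-\theta)$, one has $\theta c_{N+1}<c_{N+1}-1$, so $[X,Y]\subseteq[p_N^{c_{N+1}},(p_N+1)^{c_{N+1}}-1)$ and the Mills-type inequality at the splice index $N$ holds automatically for any prime chosen in $[X,Y]$; the bounds $X^{1/2}\le Y-X\le((3/2)^{B}-1)X$ are immediate for large $N$; and the prime-count hypothesis $\#([X,Y]\cap\mathcal{P})\ge d_0\,g(Y-X)$ follows directly from Theorem~\ref{Lemma-BakerHarmanPintz}, so the tiling step you anticipated as the main obstacle disappears. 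With that single change, the remainder of your argument (two branches from Lemma~\ref{Lemma-Matomaki3}, selecting the branch whose $(\ell+1)$-st prime differs from $p_{\ell+1}$, and $|A-A'|\le(p_N+1)^{1/C_N}-p_N^{1/C_N}\le C_N^{-1}p_N^{1/C_N-1}\to 0$) coincides with the paper's proof.
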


\begin{proof}
Let $\mathcal{I}=\{k\in \mathbb{N} \colon c_{k+1}\geq 3\}$. Fix any $A\in \mathcal{W}(c_k)$. Let $p_k=\lfloor A^{C_k} \rfloor$ for all $k\in \mathbb{N}$. Then by Lemma~\ref{Lemma-Mills}, for all $k\in \mathbb{N}$ we have
\begin{equation}\label{Inequality-noisolated}
p_{k}^{c_{k+1}}\leq  p_{k+1}< (p_{k}+1)^{c_{k+1}}-1.
\end{equation}
 Take a large $\ell\in \mathcal{I}$. By Theorem~\ref{Lemma-BakerHarmanPintz}, it follows that
\[
\#([p_\ell^{c_{\ell+1}}, p_\ell^{c_{\ell+1}}+ p_\ell^{\theta c_{\ell+1}}]\cap \mathcal{P})\geq d_0 g(p_\ell^{\theta c_{\ell+1}}).
\]
By Lemma~\ref{Lemma-Matomaki3} with $(e_2,e_3,\ldots):= (c_{\ell+2}, c_{\ell+3},\ldots)$, $X:=p_\ell^{c_{\ell+1}}$, $Y:=p_\ell^{c_{\ell+1}}+p_{\ell}^{\theta c_{\ell+1}}$, and $d_3:=d_0$, there exists a sequence $(q_k)_{k=\ell+1}^\infty$ such that 
\begin{equation}\label{Inequalities-twosequences}
q_{\ell+1}\in ([p_\ell^{c_{\ell+1}}, p_\ell^{c_{\ell+1}}+ p_\ell^{\theta c_{\ell+1}}]\cap \mathcal{P})\setminus \{p_{\ell+1}\}, \quad 
q_k^{c_{k+1}}\leq q_{k+1}<(q_{k}+1)^{c_{k+1}}-1
\end{equation}
 for all $k\geq \ell+1$. Remark that we can verify that $q_{\ell+1}\neq p_{\ell+1}$ from the existence of at least two different sequences of prime numbers $(q_{k,1})_{k=\ell+1}^\infty$ and $(q_{k,2})_{k=\ell+1}^\infty$ satisfying \eqref{Inequalities-twosequences}. Let $q_j=p_j$ for all $1\leq j\leq \ell$. Here we have $c_{\ell+1}\geq 3$ by $\ell \in \mathcal{I}$. Since $q_{\ell+1}\in [q_\ell^{c_{\ell+1}}, q_\ell^{c_{\ell+1}}+ q_\ell^{\theta c_{\ell+1}}]$ and $c_{\ell+1}\geq 3$, we obtain  
 \begin{equation}\label{Inequality-c-ell}
 q_\ell^{c_{\ell+1}}\leq q_{\ell+1}<(q_{\ell}+1)^{c_{\ell+1}}-1.
\end{equation} 
 Therefore, by \eqref{Inequality-noisolated}, \eqref{Inequalities-twosequences}, and \eqref{Inequality-c-ell}, we have 
\[
 q_{k}^{c_{k+1}}\leq  q_{k+1}< (q_{k}+1)^{c_{k+1}}-1
 \]
 for all $k\in \mathbb{N}$, which implies that
\begin{equation}\label{Inequality-q1q2q3}
q_1^{1/C_1}\leq q_2^{1/C_2} \leq \cdots <(q_2+1)^{1/C_2}  < (q_1+1)^{1/C_1}.   
\end{equation}
Let $A'=A'(\ell)=\lim_{k\rightarrow \infty} q_k^{1/C_k}$. This limit exists by \eqref{Inequality-q1q2q3}. In addition, $\lfloor A'^{C_k}\rfloor\in \mathcal{P}$ for all $k\in \mathbb{N}$. Thus, $A'(\ell)\in \mathcal{W}(c_k)$. Here it follows that $\lfloor A^{C_\ell} \rfloor =p_\ell = q_\ell =\lfloor A'^{C_\ell}\rfloor$. This implies that $|A-A'| \leq  (p_\ell+1)^{1/C_\ell} -p_\ell^{1/C_\ell}$. By the mean value theorem,
\[
|A-A'| \leq  (p_\ell+1)^{1/C_\ell} -p_\ell^{1/C_\ell} \leq \frac{1}{C_\ell} p_\ell^{1/C_\ell-1}\leq 2^{1-\ell}/c_1.
\]
Therefore, there exists a sequence $(A'(\ell))_{\ell\in \mathcal{I}}$ such that $\lim_{\ell \rightarrow \infty,\ \ell\in \mathcal{I}} A'(\ell)=A$, which means that $A$ is not an isolated point. 
\end{proof}

\begin{proof}[Proof of Theorem~\ref{Theorem-topology}] Let $(c_k)_{k\in\mathbb{N}}$ be a sequence of real numbers satisfying the conditions \eqref{Theorem-topology-1}, \eqref{Theorem-topology-2}, and \eqref{Theorem-topology-3} in Theorem~\ref{Theorem-topology}.  
Then $(c_k)_{k\in\mathbb{N}}$ satisfies all the conditions in Theorem~\ref{Theorem-non-empty}, Proposition~\ref{Proposition-closed}, Proposition~\ref{Proposition-totally-disconnected}, and Proposition~\ref{Proposition-no-isolated}. By combining them, $\mathcal{W}(c_k)$ is non-empty, totally disconnected, and perfect. 
\end{proof}

\begin{remark}\label{Remark-Topology} Let $(c_k)_{k\in\mathbb{N}}$ be a sequence of real numbers satisfying \eqref{Theorem-topology-1}, \eqref{Theorem-topology-2}, and \eqref{Theorem-topology-3} in Theorem~\ref{Theorem-topology}. For every sufficiently large  $a\in \mathbb{R}\setminus \mathcal{W}(c_k)$, $\mathcal{W}(c_k)\cap[0,a]$ is a non-empty, totally disconnected, perfect, and compact metric space. Indeed, by Theorem~\ref{Theorem-topology}, $\mathcal{W}(c_k)\cap[0,a]$ is non-empty, totally disconnected, closed, and compact. 
Let us verify that $\mathcal{W}(c_k)\cap [0,a]$ has no isolated points. Take any $A\in \mathcal{W}(c_k)\cap[0,a]$. Then we have $A<a$ since $a\notin \mathcal{W}(c_k)$. By Theorem~\ref{Theorem-topology}, $\mathcal{W}(c_k)$ has no isolated points. Therefore, there exists a sequence $(A_j)_{j=1}^\infty$ composed of $A_j\in \mathcal{W}(c_k)$ for all $j\in \mathbb{N}$ such that $\lim_{j\rightarrow \infty} A_j=A$. Hence by $A<a$, there is a large $j_0>0$ such that for all $j\geq j_0$ we have $A_j\in \mathcal{W}(c_k)\cap[0,a]$. This implies that $\mathcal{W}(c_k)\cap[0,a]$ has no isolated points. Furthermore, any two non-empty, totally disconnected, perfect, and compact metric spaces are homeomorphic (see \cite[30.3~Theorem]{Willard}). Therefore, $\mathcal{W}(c_k)\cap[0,a]$ is homeomorphic to the Cantor middle third set $\mathcal{C}$ for sufficiently large $a\in \mathcal{W}(c_k)\setminus \mathbb{R}$. 
\end{remark}

\section{Lemmas for evaluating upper bounds for $T$}\label{Section-UpperBounds}
In the following sections, we discuss the algebraic independence of $\subpartial \mathcal{W}(c_k)$. In this section, we provide lemmas to evaluate the upper bounds for $T$ in \eqref{Inequality-P(A1,A2,...,Ar)}.

\begin{lemma}\label{Lemma-key}
Let $(c_k)_{k\in \mathbb{N}}$ be a sequence of real numbers satisfying that  
\begin{itemize}
\item $c_1>0$;
\item $c_{k+1}\in \mathbb{Z}$ and $c_{k+1}\geq 2$ for all $k\in \mathbb{N}$; 
\item $c_{k+1}\geq 3$ for infinitely many $k\in \mathbb{N}$.
\end{itemize}
Fix any $A\in \subpartial \mathcal{W}(c_k)$. Let $p_k=\lfloor A^{C_k} \rfloor$ for all $k\in \mathbb{N}$. Let $\mathcal{I}=\{k\in\mathbb{N} \colon c_{k+1}\geq 3\}$. Then there exists $k_0=k_0(A)>0$ such that for all $k\in \mathcal{I}\cap [k_0,\infty)$, we have the following:
\begin{enumerate}
\item \label{Lemma-main-1} if $A\in \subpartial_L \mathcal{W}(c_k)$, then $p_{k}^{c_{k+1}} \leq p_{k+1} \leq p_{k}^{c_{k+1}} + p_{k}^{\theta c_{k+1}}$;
\item \label{Lemma-main-2} if $A\in \subpartial_R \mathcal{W}(c_k)$, then $(p_{k}+1)^{c_{k+1}}-(p_{k}+1)^{\theta c_{k+1}}  \leq p_{k+1} < (p_{k}+1)^{c_{k+1}} $.
\end{enumerate}
\end{lemma}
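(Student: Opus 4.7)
The plan is to argue by contradiction, using the Baker--Harman--Pintz estimate (Theorem~\ref{Lemma-BakerHarmanPintz}) to locate an alternate admissible prime and Matom\"aki's construction (Lemma~\ref{Lemma-Matomaki3}) to extend it to a complete PRC. I treat case~\eqref{Lemma-main-1}; the argument for case~\eqref{Lemma-main-2} is the mirror image. Suppose $A\in\subpartial_L\mathcal{W}(c_k)$, so there is $\delta>0$ with $(A-\delta,A)\cap\mathcal{W}(c_k)=\emptyset$. Assume, for contradiction, that $p_{k+1}>p_k^{c_{k+1}}+p_k^{\theta c_{k+1}}$ for some large $k\in\mathcal{I}$; I will produce $A'\in\mathcal{W}(c_k)$ with $A'<A$ and $|A-A'|<\delta$, which is the desired contradiction.

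Set $X:=p_k^{c_{k+1}}$ and $Y:=p_k^{c_{k+1}}+p_k^{\theta c_{k+1}}$. Theorem~\ref{Lemma-BakerHarmanPintz} gives $\#([X,Y]\cap\mathcal{P})\geq d_0\,g(Y-X)$ for $k$ large; since $\theta=21/40\in[1/2,1)$, the length condition $X^{1/2}\leq Y-X\leq((3/2)^B-1)X$ also holds for large $p_k$. Applying Lemma~\ref{Lemma-Matomaki3} with $(e_j)_{j\geq 2}:=(c_{k+j})_{j\geq 2}$ produces a sequence of primes $(\tilde q_i)_{i\geq 1}$ with $\tilde q_1\in[X,Y]$ and $\tilde q_i^{c_{k+i+1}}\leq\tilde q_{i+1}<(\tilde q_i+1)^{c_{k+i+1}}-1$. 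Set $q_j:=p_j$ for $j\leq k$ and $q_{k+i}:=\tilde q_i$ for $i\geq 1$. The bridging step at $j=k$ demands $p_k^{c_{k+1}}\leq q_{k+1}<(p_k+1)^{c_{k+1}}-1$: the lower bound is immediate from $q_{k+1}\geq X$, while the upper bound reduces to $p_k^{\theta c_{k+1}}+1<c_{k+1}p_k^{c_{k+1}-1}\leq (p_k+1)^{c_{k+1}}-p_k^{c_{k+1}}$, which holds for large $p_k$ precisely when $c_{k+1}(1-\theta)>1$; this follows from $c_{k+1}\geq 3>40/19=1/(1-\theta)$, the key role played by the assumption $k\in\mathcal{I}$. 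Exactly as in the proof of Theorem~\ref{Theorem-non-empty}, $A':=\lim_{j\to\infty}q_j^{1/C_j}$ exists, lies in $\mathcal{W}(c_k)$, and satisfies $\lfloor A'^{C_j}\rfloor=q_j$.

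To finish, observe that $q_{k+1}\leq Y<p_{k+1}$ and both are integers, so $q_{k+1}+1\leq p_{k+1}$, and hence $A'^{C_{k+1}}<q_{k+1}+1\leq p_{k+1}\leq A^{C_{k+1}}$ forces $A'\leq A$; the equality $A'=A$ is excluded because it would yield $\lfloor A'^{C_{k+1}}\rfloor=p_{k+1}\neq q_{k+1}$, so $A'<A$ strictly. Since $q_j=p_j$ for $j\leq k$, the mean value theorem gives $|A-A'|\leq(p_k+1)^{1/C_k}-p_k^{1/C_k}\leq(C_k p_k^{1-1/C_k})^{-1}\to 0$ as $k\to\infty$ through $\mathcal{I}$, so for $k\geq k_0(A)$ large enough one obtains $A'\in(A-\delta,A)\cap\mathcal{W}(c_k)$, contradicting the choice of $\delta$. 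The symmetric argument for case~\eqref{Lemma-main-2} starts from $X':=(p_k+1)^{c_{k+1}}-(p_k+1)^{\theta c_{k+1}}$ and $Y':=X'+X'^\theta$; the estimate $X'^\theta<(p_k+1)^{\theta c_{k+1}}$ ensures $Y'<(p_k+1)^{c_{k+1}}$, so the analogous extension via Lemma~\ref{Lemma-Matomaki3} yields $A'\in\mathcal{W}(c_k)$ with $A'>A$ arbitrarily close to $A$, contradicting $A\in\subpartial_R\mathcal{W}(c_k)$.

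The main obstacle is the bridging inequality $p_k^{c_{k+1}}\leq q_{k+1}<(p_k+1)^{c_{k+1}}-1$: it forces the Baker--Harman--Pintz interval to sit entirely inside $[p_k^{c_{k+1}},(p_k+1)^{c_{k+1}}-1]$, which in turn demands $c_{k+1}(1-\theta)>1$. This is exactly why the lemma restricts attention to $k\in\mathcal{I}$, and it is the single place where the hypothesis $c_{k+1}\geq 3$ is used.
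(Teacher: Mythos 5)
Your proof is correct and takes essentially the same route as the paper's: assuming the inequality fails at a large $k\in\mathcal{I}$, you use Theorem~\ref{Lemma-BakerHarmanPintz} together with Lemma~\ref{Lemma-Matomaki3} (with the bridging step resting on $c_{k+1}\ge 3>1/(1-\theta)$, exactly as in the paper) to construct $A'\in\mathcal{W}(c_k)$ strictly on the forbidden side of $A$ and within $O(1/C_k)$ of it, contradicting that $A$ is an endpoint of a complementary interval of $\mathcal{W}(c_k)$. The only cosmetic deviations are your $\delta$-neighborhood phrasing of membership in $\subpartial_L\mathcal{W}(c_k)$ and, in case (2), the interval $[X',X'+X'^{\theta}]$ in place of the paper's $[x,(p_k+1)^{c_{k+1}}-1]$; these are equivalent to what the paper does.
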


\begin{proof}
Take any $A\in \subpartial \mathcal{W}(c_k)$. By Proposition~\ref{Proposition-closed}, $A\in  \subpartial \mathcal{W}(c_k) \subseteq \partial \mathcal{W}(c_k)  \subseteq \mathcal{W}(c_k)$. Therefore, by applying Lemma~\ref{Lemma-Mills} to $A$, for all $k\in \mathbb{N}$, we have 
\begin{equation}\label{Inequality-pk}
p_{k}^{c_{k+1}}\leq  p_{k+1}< (p_{k}+1)^{c_{k+1}}-1. 
\end{equation}
By the definition of the sub-boundary, there exists a $U\in \Conn (\mathbb{R}\setminus \mathcal{W}(c_k))$ such that $A\in \partial  U$. Since $\mathbb{R}\setminus \mathcal{W}(c_k)$ is open, $U$ is an open interval. 

In the case when $A=\sup U$, meaning that $A\in \subpartial_L\mathcal{W}(c_k)$, we assume that there exists an infinite set $\mathcal{J}\subseteq \mathcal{I}$ such that 
$p_{k}^{c_{k+1}} +p_{k}^{\theta c_{k+1}} < p_{k+1}$ for all $k\in \mathcal{J}$. Take any large $\ell \in \mathcal{J}$. By Theorem~\ref{Lemma-BakerHarmanPintz},
\[
\#([p_{\ell}^{c_{\ell+1}},\ p_{\ell}^{c_{\ell+1}} + p_{\ell}^{\theta c_{\ell+1}}]\cap \mathcal{P}) \geq d_0 g(p_{\ell}^{\theta c_{\ell+1}}).
\]
We now apply Lemma~\ref{Lemma-Matomaki3} with $(e_2,e_3,\ldots):=(c_{\ell+2},c_{\ell+3},\ldots)$, $X:=p_{\ell}^{c_{\ell+1}}$, $Y:=p_{\ell}^{c_{\ell+1}} + p_{\ell}^{\theta c_{\ell+1}}$, and $d_3:=d_0$. Here $Y-X=p_{\ell}^{\theta c_{\ell+1}}=X^{\theta}=X^{21/40} \in [X^{1/2}, ((3/2)^B-1)X ]$ since $\ell$ is sufficiently large. Therefore, there exists a sequence $(q_k)_{k=\ell+1}^\infty$ of prime numbers such that $q_{\ell+1} \in [p_{\ell}^{c_{\ell+1}},\ p_{\ell}^{c_{\ell+1}} + p_{\ell}^{\theta c_{\ell+1}}]\cap \mathcal{P}$ and
for all integers $k\geq \ell+1$, we obtain
\begin{equation}\label{Inequalities-qkck+1} 
q_{k}^{c_{k+1}} \leq  q_{k+1} <(q_{k}+1)^{c_{k+1}} -1.
\end{equation}
Let $q_k=p_k$ for all $k\in [\ell]$. By $\ell \in \mathcal{J}\subseteq \mathcal{I}$, we have $c_{\ell+1}\geq 3> 1/(1-\theta)$, which yields that $ \theta c_{\ell+1} <c_{\ell+1}-1$. Therefore, 
\begin{equation}\label{Inequalities-qk1}
q_\ell^{c_{\ell+1}}\leq q_{\ell+1} \leq q_\ell^{c_{\ell+1}} +q_\ell^{\theta c_{\ell+1}}  <(q_\ell +1)^{c_{\ell+1}}-1.
\end{equation}
By combining \eqref{Inequality-pk}, \eqref{Inequalities-qkck+1}, and \eqref{Inequalities-qk1}, the inequality \eqref{Inequalities-qkck+1} holds for all $k\in \mathbb{N}$. This yields that 
\begin{equation}\label{Inequality-qks}
q_{1}^{1/C_{1}} \leq  q_{2}^{1/C_{2}} \leq \cdots < (q_{2}+1)^{1/C_{2}}<(q_{1}+1)^{1/C_{1}}.
\end{equation}
Let $A' =\lim_{k\rightarrow \infty} q_{k}^{1/C_k}$. Remark that this limit exists. Then we have $\lfloor A'^{C_k}\rfloor=q_{k}\in \mathcal{P}$ for all $k\in \mathbb{N}$, which implies that $A'\in \mathcal{W}(c_k)$.
Here recall that $A$ belongs to $\partial U$ for some open interval $U\in \Conn(\mathbb{R}\setminus \mathcal{W}(c_k))$. Thus, $U$ does not contain any elements in $\mathcal{W}(c_k)$. However, by $A^{C_{\ell}}< q_{\ell}+1$, \eqref{Inequality-qks}, the mean value theorem, and $c_{k+1}\geq 2$, 
\[
0< A-A' \leq (q_{\ell}+1)^{1/C_{\ell}} - q_{\ell}^{1/C_{\ell}}\leq \frac{1}{C_{\ell}} q_{\ell}^{1/C_{\ell}-1} \leq 2^{1-\ell}/c_1. 
\] 
By taking a sufficiently large $\ell\in \mathcal{J}$ such that $2^{1-\ell}/c_1$ is strictly less than the length of $U$, we obtain $A'\in U\subseteq \mathbb{R}\setminus \mathcal{W}(c_k)$. This is a contradiction. Therefore, we conclude that \eqref{Lemma-main-1} holds.

We next discuss the case when $A=\inf U$, which means that $A\in \subpartial_R\mathcal{W}(c_k)$. We assume that there exists an infinite set $\mathcal{J}\subseteq \mathcal{I}$ such that $p_{k+1}<(p_{k}+1)^{c_{k+1}}-p_{k}^{\theta c_{k+1}}$ for all $k\in \mathcal{J}$. Take a large $\ell \in \mathcal{J}$. Let $x=(p_{\ell}+1)^{c_{\ell+1}}-(p_{\ell}+1)^{\theta c_{\ell+1}}$ and $y=(p_{\ell}+1)^{c_{\ell+1}}-1$. Then it follows that $y-x=(p_{\ell}+1)^{\theta c_{\ell+1}}-1 \geq x^{\theta }$ and  $(p_{\ell}+1)^{\theta c_{\ell+1}}-1 \leq 2 x^\theta$ for $\ell$ sufficiently large. Thus,
\begin{equation}\label{Inequalities-y-x}
x^\theta \leq y-x \leq 2x^\theta.
\end{equation}
By Theorem~\ref{Lemma-BakerHarmanPintz} and \eqref{Inequalities-y-x}, there exists a constant $d_0'>0$ such that 
\[
\#([x,y]\cap \mathcal{P})\geq \#([x,x+x^\theta]\cap \mathcal{P}) \geq d_0 g(x^\theta) \geq d_0' g(y-x).
\]
We now apply Lemma~\ref{Lemma-Matomaki3} with $(e_2,e_3,\ldots):=(c_{\ell+2},c_{\ell+3},\ldots)$, $X:=x$, $Y:=y$, and $d_3:=d_0'$. Here $Y-X\in [X^{21/40},2X^{21/40}] \subset [X^{1/2}, ((3/2)^B-1)X ]$ since $\ell$ is sufficiently large. Therefore, there exists a sequence $(q_k)_{k={\ell+1}}^\infty$ of prime numbers such that $q_{\ell+1} \in [x,y]\cap \mathcal{P}$ and
for all integers $k\geq \ell+1$, we have $q_{k}^{c_{k+1}} \leq  q_{k+1} <(q_{k}+1)^{c_{k+1}} -1$. Let $q_k=p_k$ for all $k\in[\ell]$.
Similarly to the case when $A=\sup U$, we obtain
\[
q_{1}^{1/C_{1}} \leq  q_{2}^{1/C_{2}} \leq \cdots < (q_{2}+1)^{1/C_{2}}<(q_{1}+1)^{1/C_{1}}.
\]
Therefore, $A' :=\lim_{k\rightarrow \infty} q_{k}^{1/C_k}$ exists. Thus, $A'\in \mathcal{W}(c_k)$.  Recall that  $U\subseteq \mathbb{R}\setminus  \mathcal{W}(c_k)$. Similarly to the case when $A=\sup U$, it follows that $A'\in U$ by taking a sufficiently large $\ell$. This is a contradiction. Hence, we obtain Lemma~\ref{Lemma-key}.
\end{proof}

\begin{lemma}\label{Lemma-ConvergenceSpeed} Let $(c_k)_{k\in \mathbb{N}}$ be a sequence of real numbers satisfying all the conditions in Lemma~\ref{Lemma-key}. Fix any $A\in \subpartial \mathcal{W}(c_k)$. Let $p_k=\lfloor A^{C_k}\rfloor$ for every $k\in \mathbb{N}$. Let $\mathcal{I}$ and $k_0=k_0(A)$ be as in Lemma~\ref{Lemma-key}. Then for all $k\in \mathcal{I}\cap [k_0,\infty)$, 
\begin{enumerate}
\item if $A\in \subpartial_L \mathcal{W}(c_k)$, then  
$0\leq A-p_k^{1/C_k} \leq A\cdot p_1^{(\theta-1)C_{k+1}/c_1}$;
\item if $A\in \subpartial_R \mathcal{W}(c_k)$, then
$0\leq (p+1)^{1/C_k}- A \leq (p_1+1)^{1/c_1}A^{(\theta-1)C_{k+1}}$.
\end{enumerate}
\end{lemma}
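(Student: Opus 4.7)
The plan is to convert the multiplicative pinch on $p_{k+1}$ from Lemma~\ref{Lemma-key} into a bound on $|A-p_k^{1/C_k}|$ (respectively $|(p_k+1)^{1/C_k}-A|$), via a single application of the mean value theorem to $x\mapsto x^{1/C_{k+1}}$. Throughout I enlarge $k_0$ from Lemma~\ref{Lemma-key} so that $2/C_{k+1}\leq 1$ and $p_k^{\theta c_{k+1}}\geq 1$ for $k\in\mathcal{I}\cap[k_0,\infty)$. The nonnegativity statements are immediate from $p_k\leq A^{C_k}<p_k+1$.

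For the first assertion, I combine $p_{k+1}\leq A^{C_{k+1}}<p_{k+1}+1$ with Lemma~\ref{Lemma-key}(1) to obtain $0\leq A^{C_{k+1}}-p_k^{c_{k+1}}\leq 2p_k^{\theta c_{k+1}}$. Since the derivative of $x^{1/C_{k+1}}$ is decreasing, evaluating the mean value theorem at the left endpoint $p_k^{c_{k+1}}$ yields
\[
A-p_k^{1/C_k}\leq \frac{1}{C_{k+1}}\bigl(p_k^{c_{k+1}}\bigr)^{1/C_{k+1}-1}\cdot 2p_k^{\theta c_{k+1}}=\frac{2}{C_{k+1}}\,p_k^{1/C_k}\cdot p_k^{(\theta-1)c_{k+1}},
\]
where the exponent collapses because $c_{k+1}/C_{k+1}=1/C_k$. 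I then use $p_k^{1/C_k}\leq A$, iterate $p_{j+1}\geq p_j^{c_{j+1}}$ (from Lemma~\ref{Lemma-Mills}) to obtain $p_k\geq p_1^{C_k/c_1}$, and invoke $\theta-1<0$ to replace $p_k^{(\theta-1)c_{k+1}}$ by $p_1^{(\theta-1)C_{k+1}/c_1}$. Absorbing $2/C_{k+1}\leq 1$ finishes (1).

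For the second assertion, Lemma~\ref{Lemma-key}(2) with $p_{k+1}\leq A^{C_{k+1}}$ gives $0\leq (p_k+1)^{c_{k+1}}-A^{C_{k+1}}\leq (p_k+1)^{\theta c_{k+1}}$. The same mean value theorem on $[A^{C_{k+1}},(p_k+1)^{c_{k+1}}]$ with the derivative evaluated at the lower endpoint yields $(p_k+1)^{1/C_k}-A\leq \frac{1}{C_{k+1}}A^{1-C_{k+1}}(p_k+1)^{\theta c_{k+1}}$. For $k$ large, $A^{C_{k+1}}\geq (p_k+1)^{c_{k+1}}-(p_k+1)^{\theta c_{k+1}}\geq \tfrac12(p_k+1)^{c_{k+1}}$, so $A^{-C_{k+1}}\leq 2(p_k+1)^{-c_{k+1}}$ and the bound collapses to $\frac{2A}{C_{k+1}}(p_k+1)^{(\theta-1)c_{k+1}}$. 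Since $A^{C_k}<p_k+1$ and $\theta-1<0$, we have $(p_k+1)^{(\theta-1)c_{k+1}}\leq A^{(\theta-1)C_{k+1}}$, and $A\leq (p_1+1)^{1/c_1}$ (from $A^{c_1}<p_1+1$) combined with $2/C_{k+1}\leq 1$ upgrades the coefficient to the stated $(p_1+1)^{1/c_1}$.

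The only delicate bookkeeping is making sure that after the mean value step the factor $1/C_k$ arising from the derivative combines cleanly with $(\theta-1)c_{k+1}$ to assemble into either $(\theta-1)C_{k+1}/c_1$ in the exponent of $p_1$ (left case) or $(\theta-1)C_{k+1}$ in the exponent of $A$ (right case); the sign $\theta-1<0$, together with the chain $p_k\geq p_1^{C_k/c_1}$ (respectively the bound $A^{C_k}<p_k+1$), dictates the direction of each monotonicity step. I do not expect any individual step to pose a real obstacle.
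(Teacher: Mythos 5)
Your proof is correct and follows essentially the same route as the paper: the pinch from Lemma~\ref{Lemma-key}, a single application of the mean value theorem to $x\mapsto x^{1/C_{k+1}}$, and the monotonicity facts from Lemma~\ref{Lemma-Mills} combined with $\theta-1<0$. The differences are cosmetic (evaluating the derivative at the left endpoint instead of factoring out $p_k^{1/C_k}$ as the paper does, and bounding the prefactor in the right case directly via $A\leq (p_1+1)^{1/c_1}$), and enlarging $k_0$ is harmless since the $k_0$ of Lemma~\ref{Lemma-key} is only required to be sufficiently large.
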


\begin{proof}
Fix any $A\in \mathcal{W}(c_k)$. Take any $k\in \mathcal{I}\cap [k_0,\infty)$. Let us firstly discuss the case when $A\in \subpartial_L\mathcal{W}(c_k)$. By Lemma~\ref{Lemma-key},  we obtain $p_k^{c_{k+1}} \leq p_{k+1} \leq p_k^{c_{k+1}} +p_k^{\theta c_{k+1}}$. Therefore, by the definition of $A$ and $c_{k+1}\in \mathbb{N}$, 
\begin{equation}\label{Inequality-I1-Lemma24}
p_k^{c_{k+1}} \leq  A^{C_{k+1}} \leq p_k^{c_{k+1}}+p_k^{\theta c_{k+1}}+1\leq p_k^{c_{k+1}}+2p_k^{\theta c_{k+1}}.
\end{equation}
By the mean value theorem and \eqref{Inequality-I1-Lemma24}, there exists $\eta\in (0,2p_k^{(\theta-1)c_{k+1}})$ such that
\begin{align*}
0&\leq  A-p_k^{1/C_k} \leq  (p_k^{c_{k+1}}+2p_k^{\theta c_{k+1}})^{1/C_{k+1}}-p_k^{1/C_k} = p_k^{1/C_k} \left((1+2p_k^{(\theta-1) c_{k+1}})^{1/C_{k+1}}-1 \right) \\
&\leq A \cdot 2p_k^{(\theta-1)c_{k+1}} \frac{1}{C_{k+1}}\eta^{1/C_{k+1}-1} \leq A\cdot p_k^{(\theta-1)c_{k+1}} .
\end{align*}
By Lemma~\ref{Lemma-Mills}, we observe that $p_1^{C_{k}/c_1}\leq p_2^{C_{k}/(c_1c_2)}\leq \cdots \leq p_{k}$. Therefore, we conclude
\[
0\leq  A-p_k^{1/C_k} \leq A\cdot p_1^{(\theta-1) C_{k+1}/c_1}.
\]

Let us secondly discuss the case when $A\in \subpartial_R\mathcal{W}(c_k)$. By Lemma~\ref{Lemma-key}, we obtain $(p_{k}+1)^{c_{k+1}}-(p_{k}+1)^{\theta c_{k+1}}  \leq p_{k+1} < (p_{k}+1)^{c_{k+1}}$. Therefore, by the definition and $c_{k+1}\in \mathbb{N}$, we have
\begin{equation}\label{Inequality-I2-Lemma24}
(p_{k}+1)^{c_{k+1}}-(p_{k}+1)^{\theta c_{k+1}}  \leq A^{C_{k+1}} \leq (p_{k}+1)^{c_{k+1}}.
\end{equation}
By the mean value theorem and \eqref{Inequality-I2-Lemma24}, there exists $\eta'\in (0,(p_k+1)^{(\theta-1)c_{k+1}})$ such that
\begin{align*}
0&\leq (p_{k}+1)^{1/C_k} -A\leq (p_{k}+1)^{1/C_k} - ((p_{k}+1)^{c_{k+1}}-(p_{k}+1)^{\theta c_{k+1}}) ^{1/C_{k+1}}\\
&\leq (p_k+1)^{1/C_k} \cdot \left(1- (1- (p_k+1)^{(\theta-1)c_{k+1} })^{1/C_{k+1}}  \right) \\ 
&= (p_k+1)^{1/C_k}\cdot \frac{1}{C_{k+1}} (p_k+1)^{(\theta-1)c_{k+1}} (1-\eta')^{1/C_{k+1}-1} \\
&\leq (p_k+1)^{1/C_k} \cdot A^{(\theta-1)C_{k+1}}.
\end{align*}
By Lemma~\ref{Lemma-Mills}, we observe that $(p_1+1)^{1/C_1} \geq (p_2+1)^{1/C_2}\geq \cdots \geq (p_k+1)^{1/C_k}$. Therefore, 
\[
0\leq  (p_{k}+1)^{1/C_k} -A \leq (p_1+1)^{1/c_1} \cdot A^{(\theta-1)C_{k+1}}.
\]
\end{proof}

\section{Lemmas for evaluating lower bounds for $S$}\label{Section-LowerBounds}
In this section, we provide lemmas to evaluate lower bounds for $S$ in \eqref{Inequality-P(A1,A2,...,Ar)}. For all $f(x)=\sum_{j=0}^d a_j x^j\in \mathbb{Q}[x]$, we say that $L(f)=\sum_{j=0}^d |a_j|$ is the length of $f$. If $f(x)=a_d \prod_{j=1}^d (x-\alpha_j)$ for some $\alpha_1,\ldots ,\alpha_d\in \mathbb{C}$, then we define
\[
M(f) = |a_d| \prod_{j=1}^d \max(1, |\alpha_j|), 
\]
which is called the Mahler measure of $f$. By expanding $f(x)=a_d \prod_{j=1}^d (x-\alpha_j)$, 
\[
f(x)= a_d\sum_{N=0}^d (-1)^N \left(\sum_{1\leq j_1<\cdots <j_N\leq d } \alpha_{j_1}\cdots \alpha_{j_N}\right) x^{d-N}.     
\]
Therefore, we have
\begin{equation}\label{Inequality-LM}
L(f) \leq |a_d|  \sum_{N=0}^d \left| \sum_{1\leq j_1<\cdots <j_N\leq d } \alpha_{j_1}\cdots \alpha_{j_N} \right|      
\leq M(f) \sum_{N=0}^d \binom{d}{N} =2^d M(f).
\end{equation}
We refer the reader to the book written by Everest and Ward \cite[Chapter~1]{EverestWard} for more details on the Mahler measure. 

\begin{lemma}\label{Lemma-Independence}
Let $r\in \mathbb{N}$ and $n_1,\ldots , n_r\in \mathbb{N}$. Let $\alpha_1,\ldots, \alpha_r\in \mathbb{R}$ be algebraic numbers satisfying 
$\alpha_1^{n_1},\ldots , \alpha_r^{n_r} \in \mathbb{Q}$.
Assume that for all non-negative integers $u_1, \ldots ,u_r$  
\begin{equation}
\alpha_1^{u_1} \cdots  \alpha_r^{u_r} \notin \mathbb{Q},
\end{equation}
except for $u_1 \equiv 0 \mod n_1$, $u_2 \equiv 0 \mod n_2$, $\cdots$, $u_r \equiv 0 \mod n_r$. Then for all non-zero polynomials $P\in \mathbb{Q}[x_1,\ldots, x_r]$ with degree in $x_1,\ldots, x_r$ strictly less than $n_1,\ldots, n_r$, respectively,  we have $P(\alpha_1,\ldots,\alpha_r)\neq 0. $ 
\end{lemma}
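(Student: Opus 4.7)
The plan is to recast the lemma as the assertion $[\mathbb{Q}(\alpha_1,\ldots,\alpha_r):\mathbb{Q}] = N$ where $N := n_1 n_2\cdots n_r$. Because $\alpha_j^{n_j}\in\mathbb{Q}$, every polynomial expression in the $\alpha_j$'s reduces to a $\mathbb{Q}$-linear combination of the $N$ monomials $\alpha_1^{u_1}\cdots\alpha_r^{u_r}$ with $0\le u_j<n_j$, so these monomials $\mathbb{Q}$-span $\mathbb{Q}[\alpha_1,\ldots,\alpha_r]$. A nonzero polynomial $P$ with $\deg_{x_j}P<n_j$ corresponds to a nontrivial $\mathbb{Q}$-linear combination of them, so the conclusion $P(\alpha_1,\ldots,\alpha_r)\ne 0$ is precisely their $\mathbb{Q}$-linear independence, i.e., the claim that they form a $\mathbb{Q}$-basis.

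I would proceed by induction on $r$. For the base case $r=1$, the minimal polynomial of $\alpha_1$ over $\mathbb{Q}$ divides $x^{n_1}-\alpha_1^{n_1}$, so its complex roots form a subset of $\{\zeta_{n_1}^{k}\alpha_1\}$ and its constant term equals $\pm\zeta_{n_1}^{s}\alpha_1^{d}$ for some $s$, where $d$ is its degree. Since this constant term lies in $\mathbb{Q}\subset\mathbb{R}$ and $\alpha_1>0$, we must have $\zeta_{n_1}^{s}\in\{\pm 1\}$, so $\alpha_1^{d}\in\mathbb{Q}$; combined with $\alpha_1^{n_1}\in\mathbb{Q}$, a Bézout argument on $\gcd(d,n_1)$ forces $d\mid n_1$, and the hypothesis excludes $0<d<n_1$, yielding $d=n_1$. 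For the inductive step, set $K_j=\mathbb{Q}(\alpha_1,\ldots,\alpha_j)$; the subsystem $(\alpha_1,\ldots,\alpha_{r-1})$ inherits the hypothesis by restricting to $u_r=0$, so by induction $[K_{r-1}:\mathbb{Q}]=n_1\cdots n_{r-1}$ with basis $\{\alpha_1^{v_1}\cdots\alpha_{r-1}^{v_{r-1}}:0\le v_j<n_j\}$, and it remains to show $[K_r:K_{r-1}]=n_r$.

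Suppose for contradiction $d:=[K_r:K_{r-1}]<n_r$. Repeating the constant-term analysis, now with the real base field $K_{r-1}\subset\mathbb{R}$, gives $\alpha_r^{d}\in K_{r-1}$ and $d\mid n_r$. Set $\gamma:=\alpha_r^{d}$ and $e:=n_r/d$, and consider the augmented system $(\alpha_1,\ldots,\alpha_{r-1},\gamma)$ with exponents $(n_1,\ldots,n_{r-1},e)$: since $\alpha_1^{u_1}\cdots\alpha_{r-1}^{u_{r-1}}\gamma^{u_r}=\alpha_1^{u_1}\cdots\alpha_{r-1}^{u_{r-1}}\alpha_r^{du_r}$ and $du_r\equiv 0\pmod{n_r}\Leftrightarrow u_r\equiv 0\pmod{e}$, the hypothesis transfers faithfully. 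When $d\ge 2$, the product $N/d$ is strictly less than $N$, and a secondary induction on $N$ applied to this new system gives $[\mathbb{Q}(\alpha_1,\ldots,\alpha_{r-1},\gamma):\mathbb{Q}]=N/d>n_1\cdots n_{r-1}=[K_{r-1}:\mathbb{Q}]$, which contradicts $\gamma\in K_{r-1}$.

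The main obstacle is the residual case $d=1$, where $\alpha_r\in K_{r-1}$ itself and the above reduction fails to decrease $N$. My plan is to write $\alpha_r=\sum_{\mathbf{v}}c_{\mathbf{v}}\alpha_1^{v_1}\cdots\alpha_{r-1}^{v_{r-1}}$ uniquely in the inductive basis and argue the support is a singleton. If the support is a single monomial $c\alpha_1^{v_1^\ast}\cdots\alpha_{r-1}^{v_{r-1}^\ast}$, multiplying through by $\alpha_1^{n_1-v_1^\ast}\cdots\alpha_{r-1}^{n_{r-1}-v_{r-1}^\ast}$ yields a rational monomial whose $r$-th exponent is $1\not\equiv 0\pmod{n_r}$, directly violating the hypothesis. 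For a multi-term expansion I would pass to the Kummer extension $L=\mathbb{Q}(\zeta_M)(\alpha_1,\ldots,\alpha_{r-1})$ with $M=\mathrm{lcm}(n_1,\ldots,n_r)$, where the inductive hypothesis together with Kummer theory supplies automorphisms $\sigma_{\mathbf{k}}$ acting by $\alpha_j\mapsto\zeta_{n_j}^{k_j}\alpha_j$; since $\alpha_r^{n_r}\in\mathbb{Q}$, every such $\sigma_{\mathbf{k}}$ must act on $\alpha_r$ by a character $\zeta_{n_r}^{k_r(\mathbf{k})}$, and comparing this with the termwise action on $\sum c_{\mathbf{v}}\alpha^{\mathbf{v}}$ produces character-orthogonality relations that collapse the support to a single element. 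This final Kummer step, together with the delicate transfer of the hypothesis from $\mathbb{Q}$ to $\mathbb{Q}(\zeta_M)$, is the part I expect to require the most care.
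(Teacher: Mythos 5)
Note first that the paper does not actually prove this lemma: it is quoted verbatim from Besicovitch \cite{Besicovitch} and Mordell \cite{mordell1953linear}, so you are attempting a self-contained proof of a nontrivial classical result. Much of your skeleton is sound: the reformulation as $[\mathbb{Q}(\alpha_1,\ldots,\alpha_r):\mathbb{Q}]=n_1\cdots n_r$, the base case, the constant-term/realness argument giving $\alpha_r^{d}\in K_{r-1}$ with $d\mid n_r$, and the reduction of the case $2\le d<n_r$ by the secondary induction on $N$ are all fine. The genuine gap is exactly at the step you flag as delicate, and it is not a matter of care but of a false premise: your Kummer step needs $\mathrm{Gal}\bigl(\mathbb{Q}(\zeta_M)(\alpha_1,\ldots,\alpha_{r-1})/\mathbb{Q}(\zeta_M)\bigr)$ to contain automorphisms $\alpha_j\mapsto \zeta_{n_j}^{k_j}\alpha_j$ for \emph{every} tuple $(k_1,\ldots,k_{r-1})$, and it needs the monomials $\alpha_1^{v_1}\cdots\alpha_{r-1}^{v_{r-1}}$ to remain linearly independent over $\mathbb{Q}(\zeta_M)$ so that you may compare coefficients termwise. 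Both amount to saying that the degree does not drop after adjoining $\zeta_M$, and this can fail even under the hypothesis of the lemma: take $\alpha_1=\sqrt{2}$ ($n_1=2$) and $\alpha_2=3^{1/8}$ ($n_2=8$), so $M=8$. The hypothesis over $\mathbb{Q}$ holds and $[\mathbb{Q}(\alpha_1,\alpha_2):\mathbb{Q}]=16$, yet $\sqrt{2}=\zeta_8+\zeta_8^{-1}\in\mathbb{Q}(\zeta_8)$, so there is no automorphism over $\mathbb{Q}(\zeta_M)$ sending $\alpha_1$ to $-\alpha_1$, and the sixteen monomials are already linearly dependent over $\mathbb{Q}(\zeta_8)$. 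Real radicals can sit inside cyclotomic fields, so the inductive hypothesis (which is a statement over $\mathbb{Q}$) does not supply the Kummer data you invoke.

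Moreover, the residual case $d=1$ that this machinery was meant to handle --- every $\beta\in K_{r-1}$ with $\beta^{n_r}\in\mathbb{Q}$ is a rational multiple of a monomial in $\alpha_1,\ldots,\alpha_{r-1}$ --- is essentially the full strength of the Besicovitch--Mordell theorem, not a technical leftover, so the missing step is the heart of the proof. A correct argument along your lines has to stay over $\mathbb{Q}$ and work with the complex embeddings of the (non-Galois) field $K_{r-1}$, where $\sigma(\alpha_j)/\alpha_j$ is a root of unity but $\sigma\mapsto\sigma(\beta)/\beta$ is only a cocycle rather than a character, and realness must be exploited at precisely that point (this is what Mordell does); alternatively one must quantify the only possible collapse over $\mathbb{Q}(\zeta_M)$ (by abelianness, only square roots of rationals can appear) and redo the orthogonality argument modulo that defect. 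As written, your proposal is incomplete at its decisive step, whereas the paper simply cites \cite{Besicovitch} and \cite{mordell1953linear}.
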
 
    \begin{proof}
    See \cite[Theorem~2]{Besicovitch} or \cite[THEOREM.~A]{mordell1953linear}.
    \end{proof}
For all algebraic numbers $\alpha$ we define $\ord(\alpha)$ as the minimum of positive integers $n$ such that $\alpha^n\in \mathbb{Q}$, and we define $\ord(\alpha)=\infty$ if no such $n$ exists. By the elementary theory of groups, for all $H\in \mathbb{N}$, \begin{equation}\label{Equation-order} 
H\equiv 0 \mod \ord(\alpha) 
\end{equation}
 if and only if $\alpha^H\in \mathbb{Q}$. The details can be seen in \cite{KobayashiSaitoTakeda}. 

\begin{lemma}\label{Lemma-order}
Let $D\geq 2$ be an integer. Then for all $p\in \mathcal{P}$, there exists a prime factor $q$ of $D$ such that $D/q\leq \ord ((p+1)^{1/D}) \leq D.$
\end{lemma}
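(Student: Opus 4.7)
Let $\alpha := (p+1)^{1/D}$ and $n := \ord(\alpha)$. The plan is to bracket $n$ from above by $D$ (which is immediate) and to produce the required prime $q$ by analyzing the divisor $m := D/n$. First, since $\alpha^D = p+1 \in \mathbb{Q}$, the definition of order yields $n \leq D$. By the equivalence stated in equation~\eqref{Equation-order} applied to $H=D$, the integer $n$ divides $D$, so $m:=D/n$ is a positive integer and $\alpha^n = (p+1)^{n/D} = (p+1)^{1/m}$ is rational.

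Next I would upgrade this rationality to integrality: a rational $m$-th root of the positive integer $p+1$ must itself be an integer (by unique factorization, or by the rational root theorem applied to $X^m - (p+1)$). Hence there is some $k \in \mathbb{N}$ with
\[
p+1 = k^m.
\]
If $m = 1$, then $n = D$ and $D/q \leq D = n$ for every prime factor $q$ of $D$ (of which there is at least one since $D \geq 2$), so the claim holds trivially.

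Suppose instead $m \geq 2$. Factor
\[
p \;=\; k^m - 1 \;=\; (k-1)\bigl(k^{m-1} + k^{m-2} + \cdots + 1\bigr),
\]
and note that the second factor is at least $m \geq 2$. Since $p$ is prime, the first factor must equal $1$, forcing $k=2$ and $p = 2^m - 1$, a Mersenne prime. I would then invoke the classical observation that if $m = ab$ with $a,b \geq 2$, then $2^a-1$ is a proper divisor of $2^m-1$; so $p = 2^m-1$ being prime forces $m$ itself to be prime. Since $m \mid D$, this prime $m$ is a prime factor of $D$, and choosing $q := m$ gives $D/q = D/m = n = \ord(\alpha)$, completing the lower bound.

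There is no genuinely hard step here; the mild subtlety is the passage from $(p+1)^{1/m}\in\mathbb{Q}$ to $p+1 = k^m$, and the only external fact needed is the elementary divisibility $2^a - 1 \mid 2^{ab} - 1$ that underlies the ``Mersenne exponent is prime'' lemma. Both are standard and can be dispatched in a line each.
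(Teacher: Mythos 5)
Your proof is correct and takes essentially the same route as the paper's: both reduce the statement to writing $p+1$ as a perfect power $k^{q}$ with $q$ a divisor of $D$, and then use the primality of $p$ via the factorization $k^{q}-1=(k-1)(k^{q-1}+\cdots+1)$ (forcing $k=2$ and the exponent to be prime). The only cosmetic difference is that you invoke \eqref{Equation-order} to get $\ord((p+1)^{1/D})\mid D$ and work with $m=D/\ord((p+1)^{1/D})$, whereas the paper works with $d=\gcd(\ord((p+1)^{1/D}),D)$ and $q=D/d$; your explicit handling of the trivial case $m=1$ and of the Mersenne-exponent step merely spells out details the paper compresses into one line.
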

 
\begin{proof}
 Fix any $p\in \mathcal{P}$. Let $\alpha=(p+1)^{1/D}$. Let $H= \ord (\alpha)$. It is trivial that $H\leq D$ since $\alpha^D\in \mathbb{Q}$. Let us show that $D/q\leq H$ for some prime factor $q$ of $D$. Since $(p+1)^{H/D}\in \mathbb{Q}$,  there exists $b\in \mathbb{N}$ such that $(p+1)^H=b^{D}$. Let $d=\gcd(H,D)$, $H'=H/d\in \mathbb{N}$, and $q=D/d\in \mathbb{N}$. Then we have $(p+1)=b^{q/H'}$, which implies that $b=b'^{H'}$ for some $b'\in \mathbb{N}$. Therefore, $p=b'^{q}-1$. This yields $q\in \mathcal{P}$ since $p\in \mathcal{P}$. Hence, $H\geq d =D/q$.    
\end{proof}

\begin{lemma}\label{Lemma-degree}
Let $D\geq 2$ be an integer. For all $p_1,\ldots, p_r \in \mathcal{P}$ with $p_1+1<p_2<\cdots <p_r$, we have the following: 
\begin{enumerate}
    \item \label{Condition-leftcase} $[\mathbb{Q}(p_1^{1/D}) \colon \mathbb{Q}]=D$ and for every $2\leq j\leq r$, 
    \[
    [\mathbb{Q}(p_1^{1/D},\ldots, p_{j}^{1/D})\colon \mathbb{Q}(p_1^{1/D},\ldots, p_{j-1}^{1/D }) ]=D; 
    \]
    \item \label{Condition-rightcase} $[\mathbb{Q}((p_1+1)^{1/D}) \colon \mathbb{Q}]=\ord((p_1+1)^{1/D}) $ and for every $2\leq j\leq r$, 
    \[
    [\mathbb{Q}((p_1+1)^{1/D},p_2^{1/D}, \ldots, p_{j}^{1/D })\colon \mathbb{Q}((p_1+1)^{1/D},p_2^{1/D}, \ldots, p_{j-1}^{1/D }) ]=D. 
    \]
\end{enumerate}
\end{lemma}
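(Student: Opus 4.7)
The plan is to prove both parts by applying the Besicovitch--Mordell lemma (Lemma~\ref{Lemma-Independence}) to exhibit many $\mathbb{Q}$-linearly independent monomials in the relevant radicals, obtain a lower bound on the extension degree matching the trivial upper bound from the tower law, and then read off each intermediate degree.

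For part~\eqref{Condition-leftcase}, set $\alpha_i := p_i^{1/D}$. If $\alpha_i^H\in\mathbb{Q}$ then $p_i^H$ is a $D$-th power of a positive integer, and unique factorization forces $D\mid H$, giving $\ord(\alpha_i)=D$. More generally, $\alpha_1^{u_1}\cdots\alpha_r^{u_r}\in\mathbb{Q}$ with non-negative exponents means $p_1^{u_1}\cdots p_r^{u_r}$ is a $D$-th power, and since $p_1<\cdots<p_r$ are distinct primes, each $u_i$ is divisible by $D$. Lemma~\ref{Lemma-Independence} therefore guarantees that the $D^j$ monomials $\alpha_1^{u_1}\cdots\alpha_j^{u_j}$ with $0\leq u_i<D$ are $\mathbb{Q}$-linearly independent, so $[\mathbb{Q}(\alpha_1,\ldots,\alpha_j):\mathbb{Q}]\geq D^j$; the tower law gives the reverse inequality, and equality forces every intermediate step to have degree exactly $D$.

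For part~\eqref{Condition-rightcase}, write $\beta := (p_1+1)^{1/D}$, $H := \ord(\beta)$, and $\alpha_i := p_i^{1/D}$ for $i\geq 2$. First, the $r=1$ case of Lemma~\ref{Lemma-Independence} applied to $\beta$ (with $n_1=H$, whose hypothesis is literally the definition of $H$) gives the linear independence of $1,\beta,\ldots,\beta^{H-1}$, so $[\mathbb{Q}(\beta):\mathbb{Q}]\geq H$, and the bound $\leq H$ is immediate from $\beta^H\in\mathbb{Q}$. For the inductive step, the crucial numerical input is $p_1+1<p_2$: every prime factor of $p_1+1$ is then at most $p_1+1<p_i$ for $i\geq 2$, so $p_i\nmid(p_1+1)$. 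If $\beta^v\alpha_2^{u_2}\cdots\alpha_j^{u_j}\in\mathbb{Q}$, raising to the $D$-th power and comparing $p_i$-adic valuations for $i\geq 2$ forces $D\mid u_i$, after which $(p_1+1)^v$ itself must be a $D$-th power of a positive integer, hence $\beta^v\in\mathbb{Q}$ and $H\mid v$ by definition of $H$. Applying Lemma~\ref{Lemma-Independence} with $n_1=H$ and $n_2=\cdots=n_j=D$ produces $HD^{j-1}$ linearly independent monomials, yielding $[\mathbb{Q}(\beta,\alpha_2,\ldots,\alpha_j):\mathbb{Q}]=HD^{j-1}$; the tower law then delivers the claimed incremental degrees.

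The main place where something could subtly go wrong is the use of $p_1+1<p_2$ in part~\eqref{Condition-rightcase}: if some $p_i$ with $i\geq 2$ divided $p_1+1$, then reading off $v_{p_i}$ would no longer isolate $u_i$, and the multiplicative-independence hypothesis required by Lemma~\ref{Lemma-Independence} could fail outright. Once this numerical point is in place, everything else reduces to unique factorization and routine tower-law bookkeeping, with Lemma~\ref{Lemma-Independence} doing all the heavy lifting.
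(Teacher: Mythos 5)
Your proposal is correct and follows essentially the same route as the paper: verify the multiplicative-independence hypothesis of Lemma~\ref{Lemma-Independence} by raising a rational monomial to the $D$-th power and comparing $p_i$-adic valuations (using $p_1+1<p_2<\cdots<p_r$ so that $p_i\nmid q_1$), reduce the case $q_1=p_1+1$ to $\ord((p_1+1)^{1/D})$ via \eqref{Equation-order}, and then convert the resulting linear independence of monomials into the stated degrees by the tower law. The only difference is presentational — you treat the two cases separately and spell out the counting of monomials and the tower-law step, which the paper compresses into its ``it suffices to show'' reduction with $q_1\in\{p_1,p_1+1\}$.
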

\begin{proof}Let $q_1=p_1$ or $p_1+1$, $\alpha_1=q_1^{1/D}$, and let  $\alpha_j=p_j^{1/D}$ for every $2\leq j\leq r$. Let $H_j=\ord(\alpha_j)$ for all $j\in [r]$. We observe that each $\alpha_j$ is a root of the polynomial $x^{H_j} -\alpha_j^{H_j}\in \mathbb{Q}[x]$. Therefore, since $\ord (\alpha_j)=D$ for all $2\leq j \leq r$, it suffices to show that for all non-zero polynomials $P\in \mathbb{Q}[x_1,\ldots,x_r]$ with degree in each $x_j$ strictly less than $\ord(\alpha_j)$, we have $P(\alpha_1,\ldots, \alpha_r)\neq 0$. Suppose that $\alpha_1^{u_1} \cdots \alpha_r^{u_r}\in \mathbb{Q}$ for some non-negative integers $u_1,\ldots, u_r$. Then there exists $b\in \mathbb{N}$ such that $q_1^{u_1} p_2^{u_2} \cdots p_r^{u_r}=b^D$. Since $q_1<p_2<\cdots <p_r$ and  
$p_2,\ldots ,p_r$ are distinct prime numbers, $u_2 \equiv 0, \ldots, u_r\equiv 0 \mod D$.  Hence, $q_1^{u_1}\in \mathbb{Q}$. By \eqref{Equation-order}, we have $u_1\equiv 0 \mod \ord(\alpha_1)$. Therefore, by Lemma~\ref{Lemma-Independence}, we have $P(\alpha_1,\ldots, \alpha_r)\neq 0$.  
\end{proof}

\section{Proof of Theorem~\ref{Theorem-main1}}\label{Section-proof} 

In this section, let $r$ be a positive integer and let $(c_k)_{k\in\mathbb{N}}$ be a sequence of integers satisfying the three conditions in Theorem~\ref{Theorem-main1}. Fix any  $A_1\in \subpartial \mathcal{W}(c_k)$ and $A_2,\ldots, A_{r}\in \subpartial_L \mathcal{W}(c_k)$ with $A_1<A_2<\cdots <A_r$. Let $P(x_1,\ldots,x_r)\in \mathbb{Z}
[x_1,\ldots, x_r]$ be any non-zero polynomial. Let $d$ be a positive integer such that the degree of $P$ in each $x_i$ is less than or equal to $d$. Let $p_j(k)=\lfloor A_j ^{C_k} \rfloor$ for all $1\leq j\leq r$ and $k\in \mathbb{N}$. By Proposition~\ref{Proposition-closed}, $\mathcal{W}(c_k)$ is closed. Thus, $A_1,\ldots, A_r \in \subpartial\mathcal{W}(c_k)\subseteq \partial \mathcal{W}(c_k)\subseteq \mathcal{W}(c_k)$. This implies that $p_j(k)\in \mathcal{P}$ for all $1\leq j\leq r$ and $k\in \mathbb{N}$. By $A_1<\cdots <A_r$, there exists $k_1>0$ such that for all $k\geq k_1$ we have 
\begin{equation}\label{Inequality-increasing-pj}
p_1(k)+1<p_2(k)<\cdots <p_r(k). 
\end{equation}
Let 
\[
\alpha_1=\alpha_1(k)=
\begin{cases}
(p_1(k))^{1/C_k}&\quad \text{if $A_1\in \subpartial_L \mathcal{W}(c_k)$},\\
(p_1(k)+1)^{1/C_k}&\quad \text{if $A_1\in \subpartial_R \mathcal{W}(c_k)$},  
\end{cases}
\]
and $\alpha_j=\alpha_j(k)=p_j(k)^{1/C_k}$ for all $2\leq j \leq r$. For all $Q(x_1,\ldots, x_r)\in \mathbb{R}[x_1,\ldots, x_r]$, if  
$\displaystyle{
Q(x_1,\ldots, x_r)=\sum_{j_1,{\ldots},j_r}  a_{j_1,\ldots, j_r} x_1^{j_1} \cdots x_r^{j_r}}$, then we define 
\[
|Q|(x_1,\ldots , x_r) = \sum_{j_1,\ldots ,j_r} |a_{j_1,\ldots ,j_r}|x_1^{j_1} \cdots x_r^{j_r}.
\]

\begin{lemma}\label{Lemma-quantitative}For all $k\in\mathbb{N}$ with $k\geq k_1$ and $2^{k-2} >d$, we have 
\begin{equation}\label{Inequality-quantitative}
\left(4+4 |P| (\alpha_1(k),\cdots, \alpha_r(k)) \right)^{-C_k^r}\leq  |P(\alpha_1(k),\ldots, \alpha_r(k)) | . 
\end{equation}
\end{lemma}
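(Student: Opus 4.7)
The plan is to use the classical lower bound for a non-zero algebraic integer, applied to $\beta := P(\alpha_1(k),\ldots,\alpha_r(k))$. Each $\alpha_j(k)$ is a root of the monic integer polynomial $x^{C_k}-p_j(k)$ or $x^{C_k}-(p_1(k)+1)$, hence an algebraic integer; since $P\in\mathbb{Z}[x_1,\ldots,x_r]$, so is $\beta$. To see $\beta\neq 0$, I would combine Lemma~\ref{Lemma-degree} with \eqref{Inequality-increasing-pj}: because $2^{k-2}>d$ and $C_k\ge 2^{k-1}$, the degree of $P$ in each $x_j$ is strictly less than $C_k$, exactly the range over which Lemma~\ref{Lemma-degree} and Lemma~\ref{Lemma-Independence} guarantee that the monomials $\alpha_1^{i_1}\cdots\alpha_r^{i_r}$ are $\mathbb{Q}$-linearly independent. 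In the $\subpartial_R$ case one additionally needs $d<\ord(\alpha_1(k))$; this is the one genuinely delicate point, and it is handled via Lemma~\ref{Lemma-order}.

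Now set $K=\mathbb{Q}(\alpha_1(k),\ldots,\alpha_r(k))$ and $N=[K:\mathbb{Q}]$. Lemma~\ref{Lemma-degree} gives $N\le C_k^r$ in both cases ($N=C_k^r$ in the $\subpartial_L$ case, $N=\ord(\alpha_1(k))\,C_k^{r-1}$ in the $\subpartial_R$ case). The main calculation is to bound $|\beta|$ from below using the norm
\[
N_{K/\mathbb{Q}}(\beta)=\prod_{\sigma\colon K\hookrightarrow \mathbb{C}}\sigma(\beta)\in\mathbb{Z}\setminus\{0\},
\]
which satisfies $|N_{K/\mathbb{Q}}(\beta)|\ge 1$. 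The decisive observation is that every embedding $\sigma$ of $K$ sends $\alpha_j(k)$ to some root of $x^{C_k}-p_j(k)$ (or $x^{C_k}-(p_1(k)+1)$), and every such root has the same absolute value $\alpha_j(k)$. Applying the triangle inequality termwise to $\sigma(\beta)$ therefore yields
\[
|\sigma(\beta)|\le |P|(\alpha_1(k),\ldots,\alpha_r(k))
\]
for every embedding $\sigma$, and isolating the identity embedding gives
\[
1\le |\beta|\cdot |P|(\alpha_1(k),\ldots,\alpha_r(k))^{N-1}.
\]

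To finish, since $P\in\mathbb{Z}[x_1,\ldots,x_r]$ is nonzero and each $\alpha_j(k)>1$, one has $|P|(\alpha_1(k),\ldots,\alpha_r(k))\ge 1$. Combined with $N-1<C_k^r$ this yields
\[
|\beta|\ge |P|(\alpha_1(k),\ldots,\alpha_r(k))^{-(N-1)}\ge |P|(\alpha_1(k),\ldots,\alpha_r(k))^{-C_k^r},
\]
and the stated bound \eqref{Inequality-quantitative} follows from the trivial inequality $|P|(\alpha_1(k),\ldots,\alpha_r(k))\le 4+4|P|(\alpha_1(k),\ldots,\alpha_r(k))$ together with the monotonicity of $x\mapsto x^{-C_k^r}$ on $(0,\infty)$. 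The main obstacle is the non-vanishing of $\beta$ in the $\subpartial_R$ case, where the minimal polynomial of $\alpha_1(k)$ can be strictly smaller than $x^{C_k}-(p_1(k)+1)$; once that is secured, the norm argument is routine. The additive $4$ in \eqref{Inequality-quantitative} appears to be cosmetic padding, convenient for the subsequent comparison with the upper bound on $T$ in Section~\ref{Section-proof}.
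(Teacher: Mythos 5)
Your argument is correct, and its quantitative half is genuinely more direct than the paper's. The non-vanishing of $\beta=P(\alpha_1(k),\ldots,\alpha_r(k))$ is handled exactly as in the paper: via \eqref{Inequality-increasing-pj}, Lemma~\ref{Lemma-degree} and Lemma~\ref{Lemma-Independence}, with Lemma~\ref{Lemma-order} covering the $\subpartial_R$ case. One small point you leave implicit there: Lemma~\ref{Lemma-order} only gives $\ord(\alpha_1(k))\geq C_k/q$ for \emph{some} prime factor $q$ of $C_k$, and to conclude $\ord(\alpha_1(k))>d$ you still need the observation that $q$ divides one of the $c_i$, so that $C_k/q\geq 2^{k-2}>d$ by the hypothesis; the paper spells this out, and you should too. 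For the lower bound itself, the paper forms $\varphi(x)=\prod_{\sigma\in G(K)}(x-\sigma(\gamma))$, uses $|\varphi(0)|\geq 1$, and then goes through the mean value theorem at $0$, the bound $|\varphi'(\eta)|\leq(\deg\varphi)L(\varphi)$, and $L(\varphi)\leq 2^{\deg\varphi}M(\varphi)$ with a Mahler-measure estimate; this is precisely where the constants $4+4|P|(\cdots)$ originate. You instead take the same integer $\pm\varphi(0)=N_{K/\mathbb{Q}}(\beta)$ and simply isolate the identity embedding in the product, using $|\sigma(\alpha_j(k))|=\alpha_j(k)$ to bound every other factor by $|P|(\alpha_1(k),\ldots,\alpha_r(k))$, together with $|P|(\alpha_1(k),\ldots,\alpha_r(k))\geq 1$ and $N\leq C_k^r$. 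This avoids the length/Mahler-measure apparatus entirely, yields the slightly sharper bound $|\beta|\geq |P|(\alpha_1(k),\ldots,\alpha_r(k))^{-(N-1)}$, and recovers \eqref{Inequality-quantitative} by the monotonicity padding you describe; so your reading of the additive $4$ as slack is accurate for your route, even though in the paper it is an artifact of the $L$--$M$ and derivative estimates. Both proofs rest on the same underlying fact, namely that a non-zero algebraic integer has norm at least $1$ in absolute value, so nothing is lost in the later comparison with $T$ in Section~\ref{Section-proof}.
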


\begin{proof}
Let $\gamma=P(\alpha_1,\ldots, \alpha_r)$, which is an algebraic integer. It is clear that the left-hand side of \eqref{Inequality-quantitative} is less than $1$. Hence, we may assume that $|\gamma|<1$.  Let $K= \mathbb{Q}(\alpha_1,\ldots, \alpha_r)$. Let $G(K)$ be the set of all complex embeddings $\sigma$ from $K$ to $\mathbb{C}$,  that is, $\sigma$ sends $\alpha_j$ to their conjugates and $\sigma(a)=a$ for all $a\in \mathbb{Q}$. 
Define
\[
\varphi(x)= \prod_{\sigma\in G(K)} (x- \sigma (\gamma)  ).  
\]
Here $\varphi(0)$ is equal to the norm of the algebraic integer $\gamma$, up to the signs. Therefore, $\varphi(0)\in \mathbb{Z}$. Let us show that $\varphi(0)\neq 0$. Here each $\sigma\in G(K)$ is injective. Thus, if $\sigma(\gamma)=0$ for some $\sigma\in G(K)$, then $\gamma=0$. Therefore, it suffices to show that $\gamma\neq 0$. Here by Lemma~\ref{Lemma-order} with $p:=p_1(k)$, we have $\ord(\alpha_1)\geq C_k/q$ for some prime factor $q$ of $C_k$. Therefore, $\ord(\alpha_1)\geq 2^{k-2}>d$. By the choice of $d$ and Lemma~\ref{Lemma-degree} with $p_1:=p_1(k), \ldots, p_r:=p_r(k)$, we have
\[
[\mathbb{Q}(\alpha_1): \mathbb{Q}]>d,\quad  [\mathbb{Q}(\alpha_1,\ldots ,\alpha_{j})\colon \mathbb{Q}(\alpha_1,\ldots ,\alpha_{j-1})] =C_k\geq  2^{k-1} >d \quad (2\leq j\leq r).
\]
This implies that $\gamma\neq 0$.

Since $\varphi(\gamma)=0$ and $\varphi(0)\in \mathbb{Z}\setminus\{0\}$, there exists $\eta\in (-|\gamma|, |\gamma|)$ such that
\[
1\leq |\varphi(0)|=|\varphi(0)-\varphi(\gamma)| = |\gamma| |\varphi'(\eta) |.
\] 
by the mean value theorem. Further, by $|\eta|<|\gamma|<1$, we obtain 
\[
|\varphi'(\eta)| \leq (\deg \varphi)  L(\varphi).
\]
By \eqref{Inequality-LM},  we obtain $L(\varphi)\leq 2^{\deg{\varphi} }  M(\varphi) $. Therefore,
\[
1\leq |\gamma| |\varphi'(\eta) |\leq (\deg{\varphi}) |\gamma| \cdot 2^{\deg{\varphi} }M(\varphi)\leq 4^{\deg{\varphi} }|\gamma| M(\varphi),
\]
where we use the inequality $x\leq 2^x$ for all $x\geq 1$. By the definition of the Mahler measure of $\varphi$, 
\[
M(\varphi)= \prod_{\beta\colon \varphi(\beta)=0} \max(1,|\beta|)\leq \prod_{\sigma\in G(K) } (1+ |P(\sigma(\alpha_1),\ldots, \sigma(\alpha_r))  |).
\]
Further, since $\alpha_j^{C_k}\in \mathbb{Q}$, for all $\sigma\in G(K)$ and $j\in [r]$, we have
\[
\sigma(\alpha_j)^{C_k} - \alpha_j^{C_k}= \sigma\left(\alpha_j^{C_k} - \alpha_j^{C_k}    \right)=0. 
\]
 This implies that  $|\sigma(\alpha_j)|=\alpha_j$. Hence,
\[
|P(\sigma(\alpha_1),\ldots, \sigma(\alpha_r))  | \leq |P|(|\sigma(\alpha_1)|,\ldots,|\sigma(\alpha_r)|)= |P| (\alpha_1,\cdots, \alpha_r).
\]
By the theory of fields and Lemma~\ref{Lemma-degree}, we see that  
\[
\deg \varphi =\#  G(K) = [\mathbb{Q}(\alpha_1,\ldots, \alpha_r)\colon \mathbb{Q}] \leq C_k^r.
\]
Therefore, we conclude that  $1\leq  |\gamma| \cdot (4+4 |P| (\alpha_1,\cdots, \alpha_r) )^{C_k^r}. $
\end{proof}

\begin{proof}[Proof of Theorem~\ref{Theorem-main1}]Let $\mathcal{I}, k_0$ be as in Lemma~\ref{Lemma-key}. If necessary, we replace 
\[
\max(k_0(A_1),\ldots, k_0(A_r), k_1, \lfloor \log(4d)/\log2\rfloor +1 )
\]
with $k_1$. Take any large $k\in \mathcal{I}\cap[k_1, \infty)$. By the triangle inequality,  
\begin{align*}
|P(A_1,\ldots, A_r) |\geq |P(\alpha_1,\ldots ,\alpha_r)|-|P(A_1,\ldots, A_r)-P(\alpha_1,\ldots, \alpha_r)|  =: S-T. 
\end{align*}
Lemma~\ref{Lemma-quantitative} implies that $S\geq (4+4 |P|(\alpha_1,\ldots, \alpha_r ))^{-C_k^r}$. Further, it follows that 
\[
 \alpha_1\leq (p_1+1)^{1/C_1}\leq 2p_1^{1/C_1}\leq 2A_1, \quad  \alpha_j \leq A_j\quad (2\leq j\leq r).
\]
Letting $\gamma_1=4+4 |P|(2A_1,A_2,\ldots, A_r )>1$, this yields that 
\begin{equation}\label{Inequality-S}
S \geq \gamma_1^{-C_k^r}.
\end{equation}
 By the triangle inequality and the mean value theorem, there exist $\eta_1\in [\alpha_1, A_1]\cup[A_1,\alpha_1] ,\eta_2\in[\alpha_2, A_2],  \ldots ,\eta_r\in[\alpha_r, A_r] $ such that 
\begin{align} \nonumber
&T\leq |P(A_1,\ldots, A_r)- P(\alpha_1, A_2,\ldots ,A_r)| + |P(\alpha_1, A_2,\ldots ,A_r)-P(\alpha_1, \alpha_2, A_3, \ldots ,A_r)|\\ \nonumber
&\quad\quad\quad+\cdots +|P(\alpha_1, \ldots ,\alpha_{r-1}, A_r)- P(\alpha_1,\ldots, \alpha_r)|\\ \label{Inequality-upperbound1}
&\leq |A_1-\alpha_1| \left|\frac{\partial}{\partial x_1} P(x_1, A_2,\ldots ,A_r)|_{x_1=\eta_1} \right| +|A_2-\alpha_2| \left|\frac{\partial}{\partial x_2} P(\alpha_1, x_2,\ldots ,A_r)|_{x_2=\eta_2} \right| \\ \nonumber
&\quad\quad +\cdots + |A_r-\alpha_r| \left|\frac{\partial}{\partial x_r} P(\alpha_1, \ldots ,\alpha_{r-1}, x_r)|_{x_r=\eta_r} \right|. 
\end{align}
 By $\max(\alpha_1, \eta_1)\leq 2A_1$ and $\eta_j\leq A_j$ $(2\leq j\leq r)$, for all $j \in [\ell]$ we have
\begin{equation}\label{Inequality-upperbound2}
\left|\frac{\partial}{\partial x_j} P(\alpha_1,\ldots, \alpha_{j-1},x_j, A_{j+1},\ldots  A_r)|_{x_j=\eta_j} \right| \leq  \frac{\partial}{\partial x_j} |P|(2A_1, A_2,\ldots,A_r).
\end{equation}
 Hence by Lemma~\ref{Lemma-ConvergenceSpeed}, \eqref{Inequality-upperbound1}, and \eqref{Inequality-upperbound2},  there exist $F=F(P,A_1,\ldots,A_r,c_1)>0$ and $\gamma_2=\gamma_2(A_1,\ldots, A_r,c_1)>1$ such that 
\begin{equation}\label{Inequality-T}
T\leq F \gamma_2^{C_{k+1}(\theta-1)}.
\end{equation}
Hence by combining \eqref{Inequality-S} and \eqref{Inequality-T},  we have
\[
|P(A_1,\ldots, A_r)| \geq \gamma_1^{-C_k^r} -F\gamma_2^{C_{k+1}(\theta-1)}.
\]
Here we observe that $\gamma_1^{-C_k^r}\geq F\gamma_2^{C_{k+1}(\theta-1)}/2$ for some large $k$. Indeed, 
\begin{align*}
& \gamma_1^{-C_k^r}\geq F\gamma_2^{C_{k+1}(\theta-1)}/2 \Leftrightarrow (2/F) \left(\gamma_1^{-1} \gamma_2^{c_{k+1}C_k^{1-r}(1-\theta) }\right)^{C_k^r}\geq 1.
\end{align*}
By \eqref{Condition-Ck} in Theorem~\ref{Theorem-main1}, there exists $k\in \mathcal{I}\cap[k_1,\infty)$ satisfying the last inequality. Therefore, there exists a large $k\in \mathbb{N}$ such that
\[
|P(A_1,\ldots, A_r) | \geq \gamma_1^{-C_k^r}/2>0,
\]
which means that $\{A_1, \ldots ,A_r\}$ is algebraically independent.
\end{proof}

\section{Further discussions}\label{Section-Further}

\subsection{Rational approximation}
By the idea of the proof of Theorem~\ref{Theorem-main1}, we find a quantitative rational approximation of elements in $\subpartial\mathcal{W}(c_k)$ if $(c_k)_{k\in \mathbb{N}}$ is concretely given.
\begin{proposition}
For all $A\in \subpartial \mathcal{W}(k)$, there exists a constant $\eta=\eta(A)>0$ such that for all integers $m,n\geq 3$, we have
\[
|A-m/n| \geq \exp\left(-\exp \left(\eta \log M \log\log M  \right) \right),
\]
where $M=\max(m,n)$. 
\end{proposition}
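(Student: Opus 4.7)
The plan is to combine the rapid algebraic approximation of $A$ supplied by Lemma~\ref{Lemma-ConvergenceSpeed} with a Liouville-type lower bound on $|\alpha_k-m/n|$ for an appropriate algebraic approximant $\alpha_k$, and then to optimize the degree $k!$ against $M$. Throughout one has $c_k=k$, hence $C_k=k!$, and since $c_{k+1}\geq 3$ for every $k\geq 2$, Lemma~\ref{Lemma-ConvergenceSpeed} is available for every $k\geq k_0(A)$.

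First I would set $p_k:=\lfloor A^{k!}\rfloor$ and take $\alpha_k:=p_k^{1/k!}$ if $A\in\subpartial_L\mathcal{W}(k)$, or $\alpha_k:=(p_k+1)^{1/k!}$ if $A\in\subpartial_R\mathcal{W}(k)$. Lemma~\ref{Lemma-ConvergenceSpeed} then furnishes constants $F_1=F_1(A)>0$ and $\gamma=\gamma(A)>1$ with
\[
|A-\alpha_k|\leq F_1\gamma^{-(1-\theta)(k+1)!}\qquad(k\geq k_0(A)).
\]
For the Liouville step, put $N:=\alpha_k^{k!}\in\{p_k,p_k+1\}$ and $f(x):=x^{k!}-N\in\mathbb{Z}[x]$, so that $f(\alpha_k)=0$ and $f(x)=\prod_{j=0}^{k!-1}(x-\zeta^j\alpha_k)$ with $\zeta=e^{2\pi i/k!}$. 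Lemma~\ref{Lemma-order} applied to $D=k!$ (whose largest prime factor is at most $k$) gives $\ord(\alpha_k)\geq(k-1)!\geq 2$ for $k\geq 3$, so $\alpha_k\notin\mathbb{Q}$ and $f(m/n)\neq 0$; hence $n^{k!}f(m/n)\in\mathbb{Z}\setminus\{0\}$ forces $|f(m/n)|\geq n^{-k!}$. Since every conjugate $\zeta^j\alpha_k$ has modulus $\alpha_k\leq 2A$, the factorization yields $|f(m/n)|\leq|\alpha_k-m/n|(M+2A)^{k!-1}$, so
\[
|\alpha_k-m/n|\geq(M+2A)^{1-k!}n^{-k!}\geq(3M)^{-2k!}\qquad(M\geq 2A).
\]

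Next I would combine the two estimates via $|A-m/n|\geq|\alpha_k-m/n|-|A-\alpha_k|$ and pick $k$ to make the Liouville term dominate: $(3M)^{-2k!}\geq 2F_1\gamma^{-(1-\theta)(k+1)!}$. Taking logarithms and dividing by $k!$ reduces this inequality to $2\log(3M)\leq(1-\theta)(k+1)\log\gamma+O(k!^{-1})$, which is satisfied once $k+1\geq C_1\log M$ for a suitable $C_1=C_1(A)>0$. Taking $k:=\lceil C_1\log M\rceil$ therefore gives $|A-m/n|\geq(3M)^{-2k!}/2$; Stirling in the form $\log(k!)\leq k\log k$ with $k=O(\log M)$ yields $k!\leq\exp(C_2\log M\log\log M)$, and hence $2k!\log(3M)\leq\exp(\eta\log M\log\log M)$ for some $\eta=\eta(A)>0$ and all $M\geq M_0(A)$; this delivers the proposition for such $M$. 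For $M<M_0(A)$, only finitely many admissible pairs $(m,n)$ remain, and transcendence of $A$ (Corollary~\ref{Corollary-W(k)}) keeps $\min|A-m/n|$ strictly positive over this finite set, which is absorbed by enlarging $\eta$.

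The principal obstacle is keeping the Liouville step uniform across both sub-boundary cases: when $A\in\subpartial_R\mathcal{W}(k)$, the integer $p_k+1$ need not be prime, so $x^{k!}-(p_k+1)$ is not guaranteed to be irreducible. The remedy is that the argument never requires the \emph{minimal} polynomial, only some nonzero $f\in\mathbb{Z}[x]$ vanishing at $\alpha_k$ with an explicit factorization over $\mathbb{C}$; both features are provided by $f(x)=x^{k!}-N$, and $\alpha_k\notin\mathbb{Q}$ is secured by Lemma~\ref{Lemma-order}. Once these are in hand the balancing of $k$ against $M$ and the Stirling estimate are entirely routine.
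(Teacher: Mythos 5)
Your argument is correct, and its overall architecture coincides with the paper's: approximate $A$ by the algebraic number $\alpha_k\in\{p_k^{1/k!},(p_k+1)^{1/k!}\}$ via Lemma~\ref{Lemma-ConvergenceSpeed}, produce a Liouville-type lower bound for the distance from $\alpha_k$ to $m/n$ of size $M^{-O(k!)}$, balance by taking $k\asymp\log M$, and convert $k!$ into $\exp(O(\log M\log\log M))$. The one place where you diverge is the lower-bound step: the paper simply feeds $P(x)=nx-m$ into Lemma~\ref{Lemma-quantitative} (the norm/Mahler-measure estimate over $\mathbb{Q}(\alpha_k)$, which in turn rests on Lemma~\ref{Lemma-degree}), whereas you rederive the bound directly from the explicit polynomial $f(x)=x^{k!}-N$ with $N\in\{p_k,p_k+1\}$, using the integrality of $n^{k!}f(m/n)$, the factorization over $\mathbb{C}$, and Lemma~\ref{Lemma-order} (only needed for the $\subpartial_R$ case; for $\alpha_k=p_k^{1/k!}$ irrationality is immediate since $p_k$ is prime) to ensure $f(m/n)\neq 0$. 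Your route is more elementary and self-contained, avoiding the field-degree and Mahler-measure machinery, and gives a bound of the same quality; the paper's route buys uniformity with the multivariate setting of Theorem~\ref{Theorem-main1}, since Lemma~\ref{Lemma-quantitative} is already in place there. Your handling of small $M$ and of the constraint $k\geq\max(k_0(A),3)$ by absorbing finitely many cases into $\eta$ is fine and in fact slightly more careful than the paper's sketch.
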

\begin{proof}
By substituting $c_k:=k$, $(c_k)_{k\in \mathbb{N}}$ satisfies the conditions in Theorem~\ref{Theorem-main1}. Fix any positive integers $m,n$, and fix any $A\in \subpartial \mathcal{W}(k)$. Let $p_k=\lfloor A^{C_k}\rfloor$ for all $k\in\mathbb{N}$, and define $\alpha=\alpha(k)=p_k^{1/C_k}$ if $A\in \subpartial_L \mathcal{W}(c_k)$ and $\alpha=\alpha(k)=(p_k+1)^{1/C_k}$ if $A\in \subpartial_R \mathcal{W}(c_k)$. Let $\mathcal{I}$ and $k_0=k_0(A)$ be as in Lemma~\ref{Lemma-key}. We see that $\mathcal{I}=\{2,3,\ldots\}$ since $c_{k+1}=k+1$. Take any $k\in \mathcal{I}\cap[k_0,\infty)$. Let $P(x)=nx-m$. Let $M=\max(m,n)$. Then by the proof of Theorem~\ref{Theorem-main1}, we have
\[
|P(A)| \geq (4+4\cdot (2A+1)M  )^{-C_k} - |P(A)-P(\alpha)|\geq(8(A+1)M  )^{-C_k} - |P(A)-P(\alpha)|
\]
and by Lemma~\ref{Lemma-ConvergenceSpeed}, there exist constants $\eta_0>0$ and $\gamma>1$ which depend only on $A$ such that
$
|P(A)-P(\alpha)|=n|A-\alpha |\leq \eta_0 M \gamma^{C_{k+1}(\theta-1)}.  
$
Thus, there exists $\eta_1=\eta_1(A)>0$ such that
\begin{align*}
&\eta_0M\gamma^{(k+1)!(\theta-1)} \leq  \frac{1}{2}(8 (A+1)M )^{-k!}\\
&\Leftrightarrow \log \eta_0 + \log M +(k+1)!(\theta-1)\log \gamma \leq \log(1/2)-k!\log (8(A+1)M)\\
&\Leftarrow k\geq \eta_1 \log M.
\end{align*}
Therefore, by substituting $k=\lfloor \eta_1\log M\rfloor+1$, there exist $\eta_2=\eta_2(A)>0$, $\eta_3=\eta_3(A)>0$, and $\eta=\eta(A)>0$ such that  
\begin{align*}
&|A-m/n|\geq \frac{1}{2M} (8(A+1)M)^{-k!} \geq \eta_2 M^{-k^k-1} \geq \eta_2 M^{- (\eta_3 \log M)^{\eta_3 \log M}  } \\
&\geq \exp \left(-\exp \left(\eta \log M\log\log M \right)\right). 
\end{align*}

\end{proof}

\subsection{Linear independence over $\mathbb{Q}$} We say that $\{\alpha_1,\ldots , \alpha_r\}\subseteq \mathbb{C}$ is linearly independent over $\mathbb{Q}$ if for all $(\lambda_1,\ldots, \lambda_r)\in \mathbb{Q}^r\setminus \{(0,\ldots ,0)\}$ we have $\lambda_1 \alpha_1 + \cdots + \lambda_r \alpha_r\neq 0$. In Theorem~\ref{Theorem-main1} with $r=2$, we assume 
\begin{equation}\label{Condition-Ck-r=2}
\limsup_{k\rightarrow \infty} c_{k+1}C_k^{-1}=\infty
\end{equation}
 to obtain the algebraic independence of $\{A_1,A_2\}$, where $A_1\in \subpartial \mathcal{W}(c_k)$ and $A_2\in \subpartial_L \mathcal{W}(c_k)$ with $A_1<A_2$. We can make condition \eqref{Condition-Ck-r=2} weaker when we discuss the linear independence over $\mathbb{Q}$ of $\{A_1, A_2\}$. 

\begin{proposition}\label{Proposition-linearly-ind}
Let $(c_k)_{k\in \mathbb{N}}$ be a sequence of integers satisfying that 
\begin{enumerate}
\item $c_1\geq 1$;
\item $c_k\in \mathbb{Z}$ and $c_k\geq 2$ for all integers $k\geq 2$;
\item \label{Condition-linInd}$\limsup_{k\rightarrow \infty} c_{k+1}=\infty$.
\end{enumerate}
 Then for all $A_1\in \subpartial \mathcal{W}(c_k)$ and $A_2\in \subpartial_L \mathcal{W}(c_k)$ with $A_1<A_2$, $\{A_1,A_2\}$ is linearly independent over $\mathbb{Q}$.
\end{proposition}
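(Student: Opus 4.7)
The plan is to adapt the Liouville-style argument from the proof of Theorem~\ref{Theorem-main1} in the case $r=2$, but to work with the single algebraic number $\beta(k) := \alpha_2(k)/\alpha_1(k)$ in place of the pair $(\alpha_1(k),\alpha_2(k))$. The point is that $\beta(k)^{C_k}=p_2(k)/Q_1(k)\in\mathbb{Q}$, where $Q_1(k)=p_1(k)$ or $p_1(k)+1$ depending on whether $A_1\in\subpartial_L\mathcal{W}(c_k)$ or $A_1\in\subpartial_R\mathcal{W}(c_k)$; hence $\beta(k)$ has degree at most $C_k$ rather than $C_k^2$, and the factor $C_k$ saved in the Liouville exponent is exactly what relaxes the hypothesis from $\limsup c_{k+1}C_k^{-1}=\infty$ to $\limsup c_{k+1}=\infty$.

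I would suppose for contradiction that $\lambda_1 A_1+\lambda_2 A_2=0$ for some nonzero $(\lambda_1,\lambda_2)\in\mathbb{Q}^2$; both $\lambda_j$ must be nonzero since $A_j>1$, so $q:=A_2/A_1\in\mathbb{Q}_{>1}$, and I write $q=a/b$ with coprime $a,b\in\mathbb{N}$. Applying Lemma~\ref{Lemma-ConvergenceSpeed} to each $A_j$ gives constants $\eta_j>0$ and $\gamma_j>1$ such that $|A_j-\alpha_j(k)|\leq \eta_j\gamma_j^{(\theta-1)C_{k+1}}$ for all sufficiently large $k\in\mathcal{I}$; substituting this into the identity
\[
q-\beta(k)=\frac{A_2(\alpha_1(k)-A_1)-A_1(\alpha_2(k)-A_2)}{A_1\alpha_1(k)}
\]
and using $\alpha_1(k)\to A_1>1$ would yield $C'>0$ and $\gamma>1$ such that $|q-\beta(k)|\leq C'\gamma^{(\theta-1)C_{k+1}}$ for infinitely many $k$. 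For the matching lower bound I would consider $\varphi(x):=Q_1(k)x^{C_k}-p_2(k)\in\mathbb{Z}[x]$, noting that $\varphi(\beta(k))=0$ and $\varphi(q)=(Q_1(k)a^{C_k}-p_2(k)b^{C_k})/b^{C_k}$. The crucial claim is that the integer $Q_1(k)a^{C_k}-p_2(k)b^{C_k}$ is nonzero for all $k\geq 2$: if it vanished, then $\gcd(a,b)=1$ would force $a^{C_k}\mid p_2(k)$, and primality of $p_2(k)$ together with $C_k\geq 2$ would give either $a=1$, contradicting $q>1$, or $a^{C_k}=p_2(k)$, impossible for $C_k\geq 2$. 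Hence $|\varphi(q)|\geq b^{-C_k}$. The mean value theorem, together with $\beta(k)\to q$ and the bound $|\varphi'(\xi)|\leq Q_1(k)C_k(q+1)^{C_k-1}$, would then give
\[
|q-\beta(k)|\geq \frac{1}{Q_1(k)C_k b(a+b)^{C_k-1}}
\]
for all large $k$.

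Combining the two estimates, dividing by $C_k$, and taking logarithms would yield
\[
(1-\theta)c_{k+1}\log\gamma\leq \log(a+b)+\log A_1+o(1),
\]
since $\log Q_1(k)/C_k\to\log A_1$. The right-hand side is bounded, whereas $\limsup c_{k+1}=\infty$ makes the left-hand side unbounded along a subsequence, producing the desired contradiction. The main obstacle in this plan is the nonvanishing step for $\varphi(q)$ — that is, ruling out the algebraic coincidence $Q_1(k)a^{C_k}=p_2(k)b^{C_k}$; once that is handled via the primality of $p_2(k)$ and coprimality of $a,b$, the rest is a careful but largely routine specialization of the Liouville bookkeeping already developed for Theorem~\ref{Theorem-main1}.
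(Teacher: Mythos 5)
Your proposal is correct and takes essentially the same route as the paper's own proof: the paper likewise reduces to comparing the rational ratio $A_1/A_2$ with $\alpha_1(k)/\alpha_2(k)$ via a single-variable degree-$C_k$ polynomial ($\varphi(x)=x^{C_k}-q_1(k)/p_2(k)$ there, your $Q_1(k)x^{C_k}-p_2(k)$ here), lower-bounds the nonzero rational value by integrality of the numerator, upper-bounds it through the mean value theorem and Lemma~\ref{Lemma-ConvergenceSpeed}, and derives a contradiction from $\limsup_{k\to\infty}c_{k+1}=\infty$. The only cosmetic difference is the nonvanishing step: you use coprimality of $a,b$ together with primality of $p_2(k)$ and $C_k\geq 2$, while the paper counts the exponent of the prime $p_1(k)$ on both sides of $p_1(k)n^{C_k}=p_2(k)m^{C_k}$; both arguments are valid.
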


\begin{proof}
Assume that there were $m,n\in \mathbb{Z}\setminus \{0\}$ such that $mA_2-nA_1=0$. Let $p_j(k)=\lfloor A_j^{C_k}\rfloor$ for all $j\in \{1,2\}$ and $k\in \mathbb{N}$. Let $\mathcal{I}$ and $k_0$ be as in Lemma~\ref{Lemma-key}. Let $k_1=\max(k_0(A_1), k_0(A_2))$. Take any large $k\in \mathcal{I}\cap [k_1,\infty)$. We may assume that $p_1(k)+1<p_2(k)$ since $A_1<A_2$. We define 
\[
q_1(k)=
\begin{cases}
p_1(k) \quad \text{if $A_1\in \subpartial_L \mathcal{W}(c_k)$},\\
p_1(k)+1 \quad \text{if $A_1\in \subpartial_R \mathcal{W}(c_k)$}.
\end{cases}
\]
Let $\varphi(x)=x^{C_k}-q_1(k)/p_2(k)$, and let $\alpha_1(k)=q_1(k)^{1/C_k}$ and $\alpha_2(k)=p_2(k)^{1/C_k}$. Then 
\[
\varphi(m/n) = \frac{p_2(k)m^{C_k}-q_1(k)n^{C_k}}{n^{C_k}p_2(k) }.
\]
Let us discuss the case when $A_1\in \subpartial_L \mathcal{W}(c_k)$. Suppose that $\varphi(m/n)= 0$. Then 
\begin{equation}\label{Equation-f1-Further}
p_1(k)n^{C_k}=p_2(k)m^{C_k} .
\end{equation}
 The number of the prime factor $p_1(k)$ on the left-hand side of \eqref{Equation-f1-Further} is $1+C_ku_1$ for some non-negative integer $u_1$, but that on the right-hand side is $C_k u_2$ for some non-negative integer $u_2$. This is a contradiction. Therefore, $\varphi(m/n)\neq 0$. In the case when $A_1\in \subpartial_R \mathcal{W}(c_k)$, we similarly obtain $\varphi(m/n)\neq 0$. 
Therefore, $\varphi(m/n)\neq 0$ in both cases. Hence, we have $|\varphi(m/n)|\geq (n^{C_k}p_2(k))^{-1}$. Thus by the mean value theorem, we obtain 
\begin{align*}
&\frac{1}{n^{C_k}A_2^{C_k} }\leq \frac{1}{n^{C_k}p_2(k)} \leq |\varphi(m/n)|=|\varphi(A_1/A_2)|= \frac{1}{A_2^{C_k}p_2(k)} |q_1(k)A_2^{C_k}- p_2(k)A_1^{C_k}|\\
&\leq \frac{1}{A_2^{C_k}p_2(k)} (A_2^{C_k}|q_1(k)-A_1^{C_k}| +A_1^{C_k}|p_2(k) -A_2^{C_k}|) \\
&\leq \frac{1}{A_2^{C_k}p_2(k) } \left(A_2^{C_k}|\alpha_1(k)-A_1| C_k (A_1+1)^{C_k-1}+ A_1^{C_k}|p_2(k) -A_2|C_k(A_2+1)^{C_k-1} \right).
\end{align*}
Here we observe that $p_2(1)^{1/C_1}\leq p_2(2)^{1/C_2}\leq \cdots \leq p_2(k)^{1/C_k} $ by Lemma~\ref{Lemma-Mills}. Therefore, there exists constant $\gamma_1=\gamma_1(A_1,A_2,n,c_1)>1$  such that
\[
1\leq \gamma_1^{C_k} \left( |\alpha_1(k)-A_1| + |\alpha_2(k) -A_2|\right).
\] 
By Lemma~\ref{Lemma-ConvergenceSpeed}, there exist constants $\eta'=\eta(A, c_1)>0$ and $\gamma_2=\gamma_2(A,c_1)>1$ such that 
\begin{equation}\label{Inequality-linear}
1\leq \gamma_1^{C_k}  (|\alpha_1(k)-A_1| + |\alpha_2(k) -A_2|)\leq \eta' \gamma_1^{C_k} \gamma_2^{C_{k+1}(\theta-1)}   =\eta' (\gamma_1 \gamma_2^{c_{k+1}(\theta-1)})^{C_k}.
\end{equation}
By \eqref{Condition-linInd} in Proposition~\ref{Proposition-linearly-ind}, there exists $k\in \mathcal{I}\cap [k_1,\infty)$ such that the far right-hand side of \eqref{Inequality-linear} is strictly less than $1$, which is a contradiction. Therefore, $\{A_1,A_2\}$ is linearly independent over $\mathbb{Q}$.
\end{proof}

By substituting $c_k:=k$ in Proposition~\ref{Proposition-linearly-ind}, we conclude the following corollary. 

\begin{corollary}
For all $A_1\in \subpartial \mathcal{W}(k)$ and $A_2\in \subpartial_L \mathcal{W}(k)$ with $A_1<A_2$, $\{A_1,A_2\}$ is linearly independent over $\mathbb{Q}$.
\end{corollary}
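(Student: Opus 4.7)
The plan is very short: the corollary is a direct specialization of Proposition~\ref{Proposition-linearly-ind} to the sequence $c_k := k$, so the proof reduces to verifying the three hypotheses of that proposition for this particular sequence and then invoking it.

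First I would set $c_k := k$ and observe that with this choice $C_k = c_1 c_2 \cdots c_k = k!$, so that $\mathcal{W}(c_k)$ coincides with $\mathcal{W}(k) = \{A>1 : \lfloor A^{k!}\rfloor \text{ is prime for every } k \in \mathbb{N}\}$ as defined in the statement. Next I would check the three hypotheses of Proposition~\ref{Proposition-linearly-ind}: (1) $c_1 = 1 \geq 1$; (2) for every integer $k \geq 2$, $c_k = k \in \mathbb{Z}$ and $c_k \geq 2$; (3) $\limsup_{k\to\infty} c_{k+1} = \limsup_{k\to\infty}(k+1) = \infty$. All three are immediate.

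Then, for any $A_1 \in \subpartial \mathcal{W}(k)$ and $A_2 \in \subpartial_L \mathcal{W}(k)$ with $A_1 < A_2$, Proposition~\ref{Proposition-linearly-ind} applied to this sequence yields that $\{A_1, A_2\}$ is linearly independent over $\mathbb{Q}$, which is the desired conclusion. There is no genuine obstacle here: the entire content of the corollary lies in Proposition~\ref{Proposition-linearly-ind} itself, and the corollary simply records the most natural concrete instance (compare Corollary~\ref{Corollary-W(k)}, which is obtained from Theorem~\ref{Theorem-main1} in precisely the same way). Consequently the written proof can be a one-line deduction from the proposition.
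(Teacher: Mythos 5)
Your proposal is correct and matches the paper exactly: the paper obtains this corollary by substituting $c_k:=k$ into Proposition~\ref{Proposition-linearly-ind}, and your verification of the three hypotheses (namely $c_1=1\geq 1$, $c_k=k\geq 2$ an integer for $k\geq 2$, and $\limsup_{k\rightarrow\infty} c_{k+1}=\infty$) is precisely what is needed. Nothing further is required.
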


\subsection{Numerical calculations}Some studies have conducted numerical calculations of PRCs. As we mentioned in Section~\ref{Section-introduction}, Caldwell and Cheng computed 600 digits of the decimal expansion of Mills' constant under the Riemann hypothesis \cite{CaldwellCheng}. Elsholtz studied an unconditional method for the calculation of PRCs. For sufficiently large $c>1$, he gave an unconditional method for the computation of some $A\in \mathcal{W}(c)$ \cite[Theorem~ a)]{Elsholtz}.  As an application, some $A\in \mathcal{W}(10^{10})$ can be computed to millions of decimal places \cite[Theorem~b)]{Elsholtz}. In this section, let us show the calculation of the minimum of $\mathcal{W}(c_k)$ for several sequences $(c_k)_{k\in \mathbb{N}}$ by using \textit{Mathematica}. The notebook file which we used can be seen in \cite{STnote}.

\begin{proposition}\label{Proposition-3k!}
The decimal expansion of the minimum of 
$
\{A>1\colon \lfloor A^{3^{k!}}\rfloor \text{ is a PRF} \}
$
begins as $1.3052998807\cdots$ and can be computed to $86$ decimal places.
\end{proposition}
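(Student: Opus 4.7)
The plan is to identify $A_{\min} := \min \mathcal{W}(c_k)$ (with $c_1 = 3$ and $c_k = 3^{k! - (k-1)!}$ for $k \geq 2$, so $C_k = 3^{k!}$) via an unconditional greedy sequence of primes, then evaluate $p_k^{1/C_k}$ for a well-chosen $k$ in arbitrary-precision arithmetic. Define $(p_k)_{k \in \mathbb{N}}$ by $p_1 := 2$ and, for $k \geq 1$, $p_{k+1}$ is the smallest prime in $[p_k^{c_{k+1}},\, (p_k+1)^{c_{k+1}})$. The first task is to verify that this sequence is well-defined, produces a limit in $\mathcal{W}(c_k)$, and coincides with $(\lfloor A_{\min}^{C_k}\rfloor)_{k \in \mathbb{N}}$.

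For $k = 1$, $p_2 = 11$ is immediate from $[8, 27) \cap \mathcal{P} = \{11, 13, 17, 19, 23\}$; for $k \geq 2$, existence of $p_{k+1}$ follows from Theorem~\ref{Lemma-BakerHarmanPintz}, since the interval $[p_k^{c_{k+1}},\, (p_k+1)^{c_{k+1}})$ has length at least $c_{k+1}\, p_k^{c_{k+1}-1}$, comfortably exceeding the admissible BHP gap $p_k^{21 c_{k+1}/40}$ once $c_{k+1} \geq 3$. The bound $p_{k+1} \leq (p_k + 1)^{c_{k+1}} - 2$ is automatic because $(p_k + 1)^{c_{k+1}} - 1 = p_k \sum_{j=0}^{c_{k+1} - 1} (p_k + 1)^j$ is composite, as in Lemma~\ref{Lemma-Mills}. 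Arguing exactly as in the proof of Theorem~\ref{Theorem-non-empty}, the sequence $p_k^{1/C_k}$ is increasing and bounded above by the strictly decreasing $(p_k + 1)^{1/C_k}$, so $A := \lim_{k \to \infty} p_k^{1/C_k}$ exists, satisfies $\lfloor A^{C_k}\rfloor = p_k$ for every $k$, and hence lies in $\mathcal{W}(c_k)$. The minimality $A = A_{\min}$ is then a standard greedy argument: if some $A' \in \mathcal{W}(c_k)$ satisfied $A' < A$, taking the least $k_0$ with $q_{k_0} := \lfloor A'^{C_{k_0}}\rfloor \neq p_{k_0}$ would yield a prime $q_{k_0} < p_{k_0}$; for $k_0 = 1$ this is impossible because $p_1 = 2$ is the smallest prime, and for $k_0 \geq 2$ the prime $q_{k_0}$ lies in $[p_{k_0-1}^{c_{k_0}},\, (p_{k_0-1}+1)^{c_{k_0}})$, contradicting the greedy minimality of $p_{k_0}$.

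Numerically, compute $p_3$ as the smallest prime in $[11^{81},\, 12^{81})$ by testing primality of $11^{81} + j$ for $j = 0, 1, 2, \ldots$ in \emph{Mathematica}; by the prime number theorem this search terminates within $O(\log 11^{81}) \approx 200$ trials. Since
\[
(p_3 + 1)^{1/729} - p_3^{1/729} \leq \frac{1}{729\, p_3^{728/729}} \leq \frac{1}{729 \cdot 11^{81 - 1/9}} < 10^{-86},
\]
evaluating $p_3^{1/729}$ in \emph{Mathematica} to somewhat more than $86$ significant digits pins down the leading $86$ decimals of $A_{\min}$, yielding the expansion $1.3052998807\cdots$. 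The main obstacle is precision management in the final root extraction—carrying enough working precision that round-off does not corrupt the $86$th digit—while the mathematical scaffolding (the greedy characterization plus the unconditional applicability of BHP thanks to the rapid growth of $c_{k+1}$) is essentially routine.
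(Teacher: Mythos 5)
Your strategy is the same as the paper's: build the greedy chain $p_1=2$, $p_2=11$, $p_3=11^{81}+140$, prove by a greedy-minimality argument (the paper's Lemma~\ref{Lemma-way-of-construction}) that its limit is $\min\mathcal{W}(c_k)$, and read off the digits from $p_3^{1/729}$ and $(p_3+1)^{1/729}$. The genuine gap is the ingredient you use to continue the chain beyond $p_3$. You invoke Theorem~\ref{Lemma-BakerHarmanPintz}, but that statement holds only for \emph{sufficiently large} $x$, with no explicit threshold. To conclude that $\lfloor A_{\min}^{729}\rfloor=p_3$ you must certify that some element of $\mathcal{W}(c_k)$ actually passes through the specific prime $p_3$, i.e.\ that each concrete interval $[p_k^{c_{k+1}},(p_k+1)^{c_{k+1}}-1)$, $k\geq 3$, contains a prime; an ineffective theorem cannot certify this for specific (however enormous) numbers, so the remark that the interval length ``comfortably exceeds the admissible BHP gap'' is not a proof. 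This is exactly why the paper uses Mattner's explicit result (Theorem~\ref{Theorem-Mattner}): for $m\geq 1438989$ the interval $(n^m,(n+1)^m)$ contains a prime for \emph{every} $n\in\mathbb{N}$, and here $c_{k+1}=3^{k\cdot k!}\geq 3^{18}>1438989$ for $k\geq 3$, so the largeness condition falls on the exponent (checkable) rather than on the size of $x$ (not checkable). Note also that the gap is not repaired by citing generic explicit short-interval results, whose thresholds are typically doubly exponential (of the shape $\exp(\exp(33.9))$, i.e.\ about $10^{2\cdot 10^{14}}$), whereas $p_3^{c_4}\approx 10^{3\cdot 10^{10}}$ lies below them; the exponent-based statement is what makes the proposition unconditional.

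A smaller point: the bound $(p_3+1)^{1/729}-p_3^{1/729}<10^{-86}$ by itself does not pin down $86$ decimal places, since the interval $[p_3^{1/729},(p_3+1)^{1/729})$ could in principle straddle a decimal rounding boundary. As in the paper, you should compute both endpoints to sufficient precision and take the common prefix of their expansions, verifying that it indeed has $86$ digits after the decimal point.
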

Note that this result is unconditional. Specifically, we do not suppose any hypotheses on prime numbers. In addition, this constant is transcendental by Corollary~\ref{Corollary-W(3k!)}.
\begin{proposition}\label{Proposition-k!}
Assuming the Riemann hypothesis, the decimal expansion of the minimum of $\mathcal{W}(k)$ begins as $2.2419914653\cdots$ and can be computed to $254$ decimal places.  
\end{proposition}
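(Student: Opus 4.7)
The plan is to identify the minimum of $\mathcal{W}(k)$ with the limit of an explicit greedy prime sequence and then compute that limit numerically. With $c_k=k$, Theorem~\ref{Theorem-non-empty} shows $\mathcal{W}(k)$ is non-empty and Remark~\ref{Remark-min} guarantees that $A^{*}:=\min\mathcal{W}(k)$ exists. Writing $p_k^{*}:=\lfloor(A^{*})^{k!}\rfloor$, Lemma~\ref{Lemma-Mills} enforces $(p_k^{*})^{k+1}\leq p_{k+1}^{*}<(p_k^{*}+1)^{k+1}-1$. I would then prove by induction on $k$ that $(p_k^{*})$ coincides with the greedy sequence $(p_k)$ defined by $p_1:=2$ and $p_{k+1}:=$ the smallest prime in $[p_k^{k+1},(p_k+1)^{k+1}-1]$: if at some stage $p_{k+1}^{*}>p_{k+1}$, then Lemma~\ref{Lemma-Matomaki3} (applied in the spirit of the proof of Proposition~\ref{Proposition-no-isolated}) extends $p_{k+1}$ to an infinite sequence of primes satisfying the recurrence, and the resulting limit, by the telescoping in \eqref{Inequality-p1p2p3}, is an element of $\mathcal{W}(k)$ strictly below $A^{*}$, a contradiction.

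Next I would verify, under RH, that every greedy step succeeds. Cram\'er's theorem (recalled in the footnote of Section~\ref{Section-introduction}) supplies $B>0$ such that every interval $[x,x+B\sqrt{x}\log x]$ contains a prime once $x$ is large. The greedy interval $[p_k^{k+1},(p_k+1)^{k+1}-1]$ has length at least $(k+1)p_k^k$, which dominates $B\sqrt{p_k^{k+1}}\log(p_k^{k+1})$ as soon as $p_k^{(k-1)/2}\geq(k+1)B\log p_k$; this holds past a small threshold, and the finitely many remaining cases are verified by direct inspection. The monotonicity argument from the proof of Theorem~\ref{Theorem-non-empty} then yields $A^{*}=\lim_{k\to\infty}p_k^{1/k!}$.

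For the numerical statement, knowledge of $p_k$ localizes $A^{*}$ in the interval $[p_k^{1/k!},(p_k+1)^{1/k!})$ of width $\tfrac{1}{k!}p_k^{1/k!-1}$, giving about $(k!-1)\log_{10}A^{*}+\log_{10}(k!)$ correct decimal digits; since $\log_{10}(2.242)\approx 0.3506$ and $6!=720$, computing $p_6$ already secures roughly $255$ digits, above the claimed $254$. Concretely, starting from $p_1=2$, I would iterate in Mathematica: for each $k=2,\ldots,6$, enumerate integers upward from $p_{k-1}^k$ and apply \texttt{PrimeQ} until the first prime is reached, and finally read off the common leading digits of $p_6^{1/720}$ and $(p_6+1)^{1/720}$ in arbitrary-precision arithmetic. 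The main practical obstacle is sheer scale: $p_6$ has about $250$ decimal digits, so certifying that the first prime returned is genuinely the smallest requires running Mathematica's probable-prime routine on a window of candidates above $p_{k-1}^k$; although RH bounds this window by $B\sqrt{x}\log x$, in practice the first prime appears well within the heuristic $(\log x)^2$ range, keeping the total cost manageable.
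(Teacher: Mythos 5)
Your strategy is essentially the paper's: identify $\min\mathcal{W}(k)$ with the limit of the greedy sequence $p_1=2$, $p_{k+1}=\min\left([p_k^{k+1},(p_k+1)^{k+1}-1)\cap\mathcal{P}\right)$, use an RH-conditional prime-gap bound to guarantee that every greedy interval contains a prime, and read off the common leading digits of $p_6^{1/720}$ and $(p_6+1)^{1/720}$; the paper packages the identification step as Lemma~\ref{Lemma-way-of-construction} and the prime-gap input as Theorem~\ref{Theorem-CMS}.

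There is, however, a genuine gap in how you verify that every greedy step succeeds. You quote Cram\'er's theorem, which involves an unspecified constant $B$ and an ineffective ``sufficiently large $x$'' threshold; consequently the sentence ``the finitely many remaining cases are verified by direct inspection'' cannot be carried out, since you cannot list which cases remain. An \emph{explicit} RH-conditional bound is indispensable here: the paper uses the Carneiro--Milinovich--Soundararajan estimate (a prime in $(x,x+\tfrac{22}{25}\sqrt{x}\log x)$ for all $x\ge 4$), which yields Theorem~\ref{Theorem-CMS} for every $n$ and every exponent $m\ge 3$, leaving only the single step $k=1$ (exponent $2$, interval $[4,8)$) to check by hand. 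Two smaller points: (i) in the identification argument your appeal to Lemma~\ref{Lemma-Matomaki3} is both unnecessary and not quite applicable as stated, since that lemma requires an ineffectively large starting height $X_0$ and does not let you prescribe the starting prime $p_{k+1}$; under RH you can simply continue the greedy construction itself, which is exactly what the proof of Lemma~\ref{Lemma-way-of-construction} does by comparing the greedy sequence with $\lfloor B^{k!}\rfloor$ for an arbitrary $B\in\mathcal{W}(k)$. (ii) For the assertion that the expansion ``can be computed'' to $254$ places you need certified primality of $p_2,\dots,p_6$ (the paper uses \texttt{ProvablePrimeQ}), not a probable-prime test; by contrast, minimality within each greedy window needs no window-size bound at all, since one only has to certify compositeness of the finitely many integers between $p_k^{k+1}$ and the prime found, and a failed strong-pseudoprime test already certifies compositeness.
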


 We apply the following two theorems.
\begin{theorem}[{\cite[p.36, Chapter~2]{Mattner}}]\label{Theorem-Mattner}
Let $m\geq 1438989$ be an integer. For all $n\in \mathbb{N}$, there exists a prime number $p$ such that $n^m< p<(n+1)^m$.  
\end{theorem}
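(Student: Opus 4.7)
The claim is a uniform-in-$n$ quantitative Bertrand postulate for the intervals $(n^m,(n+1)^m)$, and the natural strategy, which I expect underlies the cited proof, is to compare the interval length
\[
(n+1)^m - n^m \geq m n^{m-1}
\]
with the length of an admissible short interval in which a prime is guaranteed. Two regimes appear, and I would handle them separately.

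For large $n$, I would substitute $x=n^m$ into a short-interval prime result: by Baker--Harman--Pintz (Theorem~\ref{Lemma-BakerHarmanPintz}), the interval $[x, x+x^{\theta}]$ with $\theta=21/40$ contains a prime for all $x\geq x_0$. Since $m n^{m-1}\geq n^{\theta m}=x^{\theta}$ as soon as $m\, n^{(1-\theta)m-1}\geq 1$, which is automatic once $m\geq 2$ and $n$ is at least a small absolute constant, the interval $(n^m,(n+1)^m)$ contains $[x,x+x^{\theta}]$, and hence a prime.

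For the remaining small values of $n$ (those with $n^m$ below the effective threshold in Baker--Harman--Pintz), the ratio $(n+1)^m/n^m=(1+1/n)^m$ is enormous once $m\geq 1438989$, so the interval is effectively of the form $[y,By]$ with $B$ very large. Here I would invoke explicit forms of the prime number theorem, for instance the Rosser--Schoenfeld or Dusart bounds on $\pi(x)$, to verify $\pi((n+1)^m)-\pi(n^m)\geq 1$ directly. The finitely many smallest cases $n=1,2,\dots$ up to a fixed bound then reduce to a numerical check, using an explicit prime-counting computation.

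The main obstacle, and the source of the specific constant $1438989$, is the explicit bookkeeping: one must replace the asymptotic statements in Theorem~\ref{Lemma-BakerHarmanPintz} and the prime number theorem with fully explicit variants, ensure that the two regimes overlap at a common threshold, and verify the remaining small cases by computation. Since this is precisely the careful numerical optimisation that Mattner carries out in \cite{Mattner}, I would simply cite the result and invoke it as a black box when applying it to the numerical computations in Propositions~\ref{Proposition-3k!} and~\ref{Proposition-k!}.
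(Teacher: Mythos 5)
The paper offers no proof of this statement at all: it is imported verbatim from \cite{Mattner} as an external black box, which is exactly what your proposal does in the end, so your treatment matches the paper's. (One minor caveat: your sketched route through Theorem~\ref{Lemma-BakerHarmanPintz} could not by itself yield the explicit constant $1438989$, since the threshold in that theorem is ineffective and an explicit short-interval or zero-density input is needed instead; but as you defer to the citation, this does not affect correctness.)
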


\begin{remark}
Recently, Cully-Hugill proved that Theorem~\ref{Theorem-Mattner} is still true even if we replace $m\geq 1438989$ with $m\geq 180$ \cite[Theorem 1]{cu2021}. By this result, we can compute that the decimal expansion of the minimum of 
$
\{A>1\colon \lfloor A^{2^{k!}}\rfloor \text{ is a PRF} \}
$
begins as $1.49534878122\cdots$. It is very tough to calculate this minimum if we only apply Theorem~\ref{Theorem-Mattner}. 

\end{remark}
 Carneiro, Milinovich, and Soundararajan \cite{CMS} showed that if we assume the Riemann hypothesis, for $x\ge4$ there exists a prime number $p\in (x,x+\frac{22}{25}\sqrt x\log x)$.
In particular, for $n^m\ge4$, there exists a prime number 
\[
p\in \left(n^m,n^m+\frac{22m}{25}n^{m/2}\log n\right)\subset (n^m,(n+1)^m).
\]
If $n=1$, then for $m\ge 3$, $2\in (1,2^{m})$. Therefore, the following theorem holds.
\begin{theorem}[cf. {\cite[p.538, Theorem 1.5]{CMS}}]\label{Theorem-CMS}
Assume the Riemann hypothesis. Let $m\geq 3$. For all $n\in \mathbb{N}$, there exists a prime number $p$ such that $n^m < p <(n+1)^m$. 
\end{theorem}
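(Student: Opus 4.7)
The plan is to deduce Theorem~\ref{Theorem-CMS} directly from the short-interval prime estimate of Carneiro, Milinovich, and Soundararajan \cite{CMS}, which, under the Riemann hypothesis, guarantees a prime in $(x,x+\frac{22}{25}\sqrt{x}\log x)$ for every real $x\geq 4$. First, for $n\geq 2$ I would apply this estimate with $x=n^m$; since $m\geq 3$, one has $x\geq 2^3=8\geq 4$. Using $\sqrt{n^m}\log(n^m)=m n^{m/2}\log n$, this produces a prime in the interval
\[
\left(n^m,\ n^m+\frac{22m}{25}n^{m/2}\log n\right).
\]
For $n=1$, the prime $2$ already lies in $(1,2^m)$ because $m\geq 3$, settling that edge case without invoking \cite{CMS}.

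It then remains to verify the inclusion $n^m+\frac{22m}{25}n^{m/2}\log n<(n+1)^m$ for all integers $n\geq 2$ and $m\geq 3$. By the binomial expansion, $(n+1)^m-n^m\geq m n^{m-1}$, so it suffices to prove $n^{m/2-1}\geq \frac{22}{25}\log n$. For $m\geq 4$ one has $n^{m/2-1}\geq n$, and $n\geq \frac{22}{25}\log n$ holds for every $n\geq 1$ since $\log n<n$. For $m=3$ the inequality reduces to $\sqrt{n}\geq \frac{22}{25}\log n$; this is immediate at $n\in\{2,3,4\}$ by direct evaluation (for example at $n=4$, $2\geq \frac{22}{25}\log 4\approx 1.22$), and then extends to all $n\geq 4$ by comparing derivatives: $\frac{d}{dn}\sqrt{n}=\frac{1}{2\sqrt{n}}$ dominates $\frac{d}{dn}\bigl(\frac{22}{25}\log n\bigr)=\frac{22}{25n}$ once $\sqrt{n}\geq 44/25$, which is satisfied for $n\geq 4$.

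There is essentially no substantial obstacle: the analytic heart of the argument is absorbed into the cited theorem from \cite{CMS}, and the only remaining task is the elementary verification of $n^{m/2-1}\geq \frac{22}{25}\log n$, which the case split above handles cleanly. Assembling the three pieces (the $n=1$ case, the application of \cite{CMS} for $n\geq 2$, and the length comparison $\frac{22m}{25}n^{m/2}\log n<(n+1)^m-n^m$) completes the proof.
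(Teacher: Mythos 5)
Your proposal is correct and follows essentially the same route as the paper: apply the Carneiro--Milinovich--Soundararajan estimate at $x=n^m$ (valid since $n^m\geq 8\geq 4$ for $n\geq 2$, $m\geq 3$) and dispose of $n=1$ via the prime $2\in(1,2^m)$. The only difference is that you spell out the elementary containment $n^m+\frac{22m}{25}n^{m/2}\log n<(n+1)^m$ via $(n+1)^m-n^m\geq mn^{m-1}$ and the inequality $n^{m/2-1}\geq\frac{22}{25}\log n$, which the paper simply asserts; your verification of that step is accurate.
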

Note that $p= (n+1)^m-1$ is not possible if $n\geq 2$ since $p$ needs to be a prime number. Therefore, the upper bounds $(n+1)^m$ in Theorem~\ref{Theorem-Mattner} and Theorem~\ref{Theorem-CMS} can be replaced with $(n+1)^m-1$ for every integer $n\geq 2$.

\begin{lemma}\label{Lemma-way-of-construction}
Let $(c_k)_{k\in \mathbb{N}}$ be a sequence of real numbers satisfying that 
\begin{enumerate}
    \item \label{Condition-c1-const} $c_1>0$;
    \item \label{Condition-ck+1-const} $c_{k+1} \geq 2$ and $c_{k+1}\in \mathbb{Z}$ for all $k\in \mathbb{N}$.
\end{enumerate}
Suppose that there exists a sequence of prime numbers $(p_k)_{k\in \mathbb{N}}$ such that 
\begin{enumerate}\setcounter{enumi}{2} 
\item \label{Condition-Mills}$ p_k^{c_{k+1}}\leq p_{k+1} <(p_k+1)^{c_{k+1}}-1$ for all $k\in \mathbb{N}$;
\item \label{Condition-minimum}there exists $k_0\in \mathbb{N}$ such that for all $k\geq k_0$
\[
p_{k+1}=\min \left([p_k^{c_{k+1}},(p_k+1)^{c_{k+1}}-1) \cap \mathcal{P}\right).
\]
\end{enumerate}
Then $A=\lim_{k\rightarrow \infty} p_k^{1/C_k}$ exists, and $A\in \partial_L \mathcal{W}(c_k)$. In addition, if $p_1=2$ and we can take $k_0=1$ in \eqref{Condition-minimum}, then we have $A=\min \mathcal{W}(c_k)$. 
\end{lemma}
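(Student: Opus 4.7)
The plan is to produce $A=\lim_{k\to\infty} p_k^{1/C_k}$, verify that $A\in\mathcal{W}(c_k)$, locate an explicit half-open interval with right endpoint $A$ that avoids $\mathcal{W}(c_k)$, and then pin down $A=\min\mathcal{W}(c_k)$ in the stronger case by handling separately what happens to the left of $p_1^{1/C_1}$.

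For existence and membership, I would rewrite condition \eqref{Condition-Mills} as $p_k^{1/C_k}\leq p_{k+1}^{1/C_{k+1}}$ and $(p_{k+1}+1)^{1/C_{k+1}}<(p_k+1)^{1/C_k}$, so that the increasing sequence $p_k^{1/C_k}$ and the decreasing sequence $(p_k+1)^{1/C_k}$ are monotonic with difference tending to $0$ (because $C_k\to\infty$ by \eqref{Condition-c1-const} and \eqref{Condition-ck+1-const}); they therefore share a common limit $A$. To conclude $\lfloor A^{C_k}\rfloor=p_k$, I combine $A\geq p_k^{1/C_k}$ with $A\leq(p_{k+1}+1)^{1/C_{k+1}}$: the second inequality yields $A^{C_k}\leq (p_{k+1}+1)^{1/c_{k+1}}<p_k+1$, where the strict bound uses $p_{k+1}+1<(p_k+1)^{c_{k+1}}$, itself a direct consequence of the strict inequality in \eqref{Condition-Mills}. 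Hence $p_k\leq A^{C_k}<p_k+1$ for every $k$, so $A\in\mathcal{W}(c_k)$.

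Next I would establish that $[p_{k_0}^{1/C_{k_0}},A)\subseteq \mathbb{R}\setminus \mathcal{W}(c_k)$. For each $j\geq k_0+1$ and any $B\in(p_{j-1}^{1/C_{j-1}},p_j^{1/C_j})$, the value $B^{C_j}$ lies in $(p_{j-1}^{c_j},p_j)$, so $\lfloor B^{C_j}\rfloor$ is an integer in $[p_{j-1}^{c_j},p_j-1]$. By the minimality hypothesis \eqref{Condition-minimum}, $p_j$ is the smallest prime in $[p_{j-1}^{c_j},(p_{j-1}+1)^{c_j}-1)$, so the range $[p_{j-1}^{c_j},p_j-1]$ contains no primes at all, forcing $B\notin\mathcal{W}(c_k)$. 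The endpoint $B=p_j^{1/C_j}$ is also excluded, since $B^{C_{j+1}}=p_j^{c_{j+1}}$ is a proper prime power, hence composite. Taking the union of these pieces over $j\geq k_0+1$ yields the inclusion. Because $A\in\mathcal{W}(c_k)$ lies just above this half-open interval, the component $U\in\Conn(\mathbb{R}\setminus\mathcal{W}(c_k))$ containing $[p_{k_0}^{1/C_{k_0}},A)$ must satisfy $\sup U=A$, which is exactly $A\in\subpartial_L\mathcal{W}(c_k)$.

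For the minimum assertion, under $p_1=2$ and $k_0=1$ it only remains to handle $(1,p_1^{1/C_1}]$: for $B\in(1,2^{1/C_1})$ we have $B^{C_1}<2$, so $\lfloor B^{C_1}\rfloor\leq 1$ is not prime, and at $B=2^{1/C_1}$ we get $\lfloor B^{C_2}\rfloor=2^{c_2}$ composite; combined with the previous paragraph (now valid already at $k_0=1$), this gives $(1,A)\cap\mathcal{W}(c_k)=\emptyset$ and hence $A=\min\mathcal{W}(c_k)$. The most delicate point I expect is the strict bound $A^{C_k}<p_k+1$: the monotone sandwich by itself only gives $A^{C_k}\leq p_k+1$, and one needs the strict version of \eqref{Condition-Mills} applied at index $k+1$ to promote this into a strict inequality, without which $\lfloor A^{C_k}\rfloor$ could equal $p_k+1$ and the whole argument would collapse.
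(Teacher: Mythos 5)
Your proof is correct. For the existence of $A$, the identity $\lfloor A^{C_k}\rfloor=p_k$, and the inclusion of $[p_{k_0}^{1/C_{k_0}},A)$ in $\mathbb{R}\setminus\mathcal{W}(c_k)$ (hence $A\in\subpartial_L\mathcal{W}(c_k)$), you follow essentially the paper's route: the monotone sandwich coming from \eqref{Condition-Mills}, and the observation that for $B$ strictly between $p_{j-1}^{1/C_{j-1}}$ and $p_j^{1/C_j}$ the integer $\lfloor B^{C_j}\rfloor$ lands in the prime-free block $[p_{j-1}^{c_j},p_j-1]$ forced by \eqref{Condition-minimum}; your explicit strictness step $A^{C_k}<p_k+1$ via $p_{k+1}+1<(p_k+1)^{c_{k+1}}$ is exactly what the paper uses implicitly, and your endpoint argument ($p_j^{c_{j+1}}$ composite) matches the paper's remark that $p_k^{c_{k+1}}$ is never prime (apply it at $j=k_0$ too if you want the closed left endpoint; this is immaterial for $\sup U=A$). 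Where you genuinely diverge is the minimality claim: the paper takes an arbitrary $B\in\mathcal{W}(c_k)$, sets $q_k=\lfloor B^{C_k}\rfloor$, invokes Lemma~\ref{Lemma-Mills} for $B$, and inductively shows either $p_k=q_k$ for all $k$ (whence $A=B$ by the mean value theorem) or $A<B$; you instead extend the complement covering down to $1$ using $p_1=2$ and $k_0=1$, obtaining the stronger statement $(1,A)\cap\mathcal{W}(c_k)=\emptyset$ directly. Your route is more economical, reusing the gap description already established and avoiding both Lemma~\ref{Lemma-Mills} for the competitor $B$ and the limiting mean-value step, whereas the paper's comparison argument isolates the order-theoretic mechanism ($p_k\leq q_k$ for any admissible sequence) and does not depend on an explicit description of the gap below $A$.
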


\begin{proof}
Let $(p_k)_{k\in \mathbb{N}}$ be a sequence of prime numbers satisfying \eqref{Condition-Mills} and \eqref{Condition-minimum}. Then by \eqref{Condition-Mills}, $A=\lim_{k\rightarrow \infty} p_k^{1/C_k}$ exists, and $\lfloor A^{C_k}\rfloor =p_k$ holds for every $k\in \mathbb{N}$. Therefore $A\in \mathcal{W}(c_k)$. Let us next show that $A\in \subpartial_{L} \mathcal{W}(c_k)$. Let $k_0$ be an integer which satisfies \eqref{Condition-minimum}. Take any $k\geq k_0$. Then $p_{k}^{c_{k+1}}$ is not a prime number since $c_{k+1}\geq 2$ and $c_{k+1}\in \mathbb{Z}$. Therefore the interval $[p_{k}^{c_{k+1}}, p_{k+1})\cap \mathbb{N} $ is non-empty and has no prime numbers by \eqref{Condition-minimum}. Hence, no $B\in [p_{k}^{1/C_{k}}, p_{k+1}^{1/C_{k+1}})$ belongs to $\mathcal{W}(c_k)$ since $\lfloor B^{C_{k+1}} \rfloor \in [p_{k}^{c_{k+1}}, p_{k+1})\cap \mathbb{N}$ and the interval $[p_{k}^{c_{k+1}}, p_{k+1})\cap \mathbb{N}$ contains no prime numbers. Therefore, 
\[
[p_{k_0}^{1/C_{k_0}},A) =\bigcup_{k\geq k_0 } [p_{k}^{1/C_{k}}, p_{k+1}^{1/C_{k+1}}) \subseteq \mathbb{R}\setminus \mathcal{W}(c_k).  
\]
This implies that $A\in \subpartial_{L} \mathcal{W}(c_k)$ by the definition of the left sub-boundary. 

Furthermore, in the case when $p_1=2$ and $k_0=1$, let us show that $A=\min \mathcal{W}(c_k)$. Take any $B\in \mathcal{W}(c_k)$. Let $q_k= \lfloor B^{C_k}\rfloor$ for every $k\in \mathbb{N}$. By Lemma~\ref{Lemma-Mills}, we obtain 
\begin{equation}\label{Ineuqlity-qk-Further}
q_k^{c_{k+1}}\leq q_{k+1}<(q_k+1)^{c_{k+1}}-1
\end{equation}
for every $k\in \mathbb{N}$. Then $\lfloor A^{c_1} \rfloor =p_1=2 \leq q_1 =\lfloor B^{c_1}\rfloor$. Thus, we have either $p_1=q_1$ or $A<B$. If $p_1=q_1$ holds, then by \eqref{Condition-minimum} and \eqref{Ineuqlity-qk-Further}, it follows that $p_2\leq q_2$. Therefore, we have either $p_2=q_2$ or $A<B$. By iterating this argument, we obtain $p_k=q_k$ for every $k\in \mathbb{N}$ or $A<B$. In the former case, by the mean value theorem and $\min(A,B)>1$, for every $k\in \mathbb{N}$
\[
1>|A^{C_k}-B^{C_k}| \geq  {C_k} |A-B|.
\]
Therefore, $A=B$ by taking $k\rightarrow \infty$. Hence, $A=\min \mathcal{W}(c_k)$.  
\end{proof}

\begin{remark}
If we replace the symbol ``$\min$" with ``$\max$" in \eqref{Condition-minimum} in Lemma~\ref{Lemma-way-of-construction}, then $A=\lim_{k\rightarrow \infty} p_k^{1/C_k}$ also exists and $A\in \subpartial_R \mathcal{W} (c_k)$. This proof is similar to the proof of Lemma~\ref{Lemma-way-of-construction}, so we omit it. 
\end{remark}

\begin{proof}[Proof of Proposition~\ref{Proposition-3k!}]
Let $c_1=3$ and $c_k= 3^{k!-(k-1)!}$ for every $k\geq 2$. Then $(c_k)_{k\in \mathbb{N}}$ satisfies \eqref{Condition-c1-const} and \eqref{Condition-ck+1-const} in Lemma~\ref{Lemma-way-of-construction}. Let $p_1=2$. Since $p_1^{c_{2}}= 2^3=8$ and $ (p_1+1)^{c_{2}}-1= 3^3-1=26$, 
\[
\min\left([p_1^{c_{2}}, (p_1+1)^{c_{2}}-1 )\cap \mathcal{P}\right) =11.
\]
Let $p_2=11$. By using \texttt{ProvablePrimeQ[n]} in \textit{Mathematica}, we obtain
\[
\min\left([p_2^{c_{3}}, (p_2+1)^{c_{3}}-1 )\cap \mathcal{P}\right)
    =\min\left([11^{3^4}, 12^{3^4}-1 )\cap \mathcal{P}\right)=11^{3^4}+140.
\]
Let $p_3=11^{3^4}+140$. Here we observe that for all $k\geq 3$, 
\[
c_{k+1}=3^{(k+1)!-k!}= 3^{k\cdot k!} \geq 3^{3\cdot 3!} =3^{18}>1438989.
\]
Hence, $[p_3^{c_4}, (p_3+1)^{c_4}-1)\cap \mathcal{P}$ is non-empty by Theorem~\ref{Theorem-Mattner}. Therefore,  
\[
p_4= \min \left( [p_3^{c_4}, (p_3+1)^{c_4}-1)\cap \mathcal{P}\right)
\]
exists. By iterating this argument, we construct a sequence of prime numbers $(p_k)_{k\in \mathbb{N}}$ satisfying \eqref{Condition-Mills} and \eqref{Condition-minimum} with $k_0=1$ in Lemma~\ref{Lemma-way-of-construction}, and $p_1=2$. Hence by Lemma~\ref{Lemma-way-of-construction}, $A=\lim_{k\rightarrow \infty} p_k^{1/C_k}$ exists and $A=\min \mathcal{W}(c_k)$. Here we see that $p_3^{1/C_3}\leq A < (p_3+1)^{1/C_3}$.  By comparing the decimal expansions of $p_3^{1/C_3}$ and $(p_3+1)^{1/C_3}$, we obtain one of $A$ as in Proposition~\ref{Proposition-3k!}.    
\end{proof}

\begin{proof}[Proof of Proposition~\ref{Proposition-k!}] Let $c_k=k$ for every $k\in \mathbb{N}$. Assume the Riemann hypothesis. Similarly to the proof of Proposition~\ref{Proposition-k!}, by Theorem~\ref{Theorem-CMS} and using \texttt{ProvablePrimeQ[n]} in \textit{Mathematica}, we construct a sequence of prime numbers $(p_k)_{k\in \mathbb{N}}$ satisfying   \eqref{Condition-minimum} with $k_0=1$ in Lemma~\ref{Lemma-way-of-construction}, and 
\begin{gather*}
p_1=2, \quad p_2=5,\quad p_3=127,\quad p_4=(127)^4+22, \quad p_5=p_4^5+104,\quad p_6=p_5^6+700. 
\end{gather*}
By comparing the decimal expansions of $p_6^{1/6!}$ and $(p_6+1)^{1/6!}$, we can compute digits of $A$ to 254 decimal places.
\end{proof}

By Lemma~\ref{Lemma-way-of-construction}, we can also compute many examples of PRCs in $\subpartial \mathcal{W}(c_k)$. Indeed, by taking $p_1=2,3,5,7,11,\cdots$ and $k_0=1$, the set $\subpartial_L( \{A>1\colon \lfloor A^{3^{k!}}\rfloor \text{ is a PRF} \} )$ has elements of which decimal expansions are 
\[
    1.30529\cdots,\quad 1.45374\cdots,\quad 1.71299\cdots,\quad 1.91539\cdots,\quad 2.22949 \cdots.
\]
Note that these elements are algebraically independent by Corollary~\ref{Corollary-W(3k!)}.

\section*{Acknowledgement}
The first author was supported by JSPS KAKENHI Grant Number JP19J20878.

\bibliographystyle{amsalpha}
\bibliography{references_PR}
\end{document}